\newcommand{\beq}{\begin{equation}}
\newcommand{\eeq}{\end{equation}}
\newcommand{\ba}{\begin{align*}}
\newcommand{\ea}{\end{align*}}
\newcommand{\bea}{\begin{eqnarray}}
\newcommand{\eea}{\end{eqnarray}}
\newcommand{\beas}{\begin{eqnarray*}}
\newcommand{\eeas}{\end{eqnarray*}}
\newtheorem{thm}{Theorem}[section]
\newtheorem{cor}[thm]{Corollary}
\newtheorem{lem}[thm]{Lemma}
\newtheorem{prop}[thm]{Proposition}
\theoremstyle{definition}
\newtheorem{example}{Example}[section]
\newtheorem{rem}[thm]{Remark}
\numberwithin{equation}{section}
\newcommand{\ricci}{\mathbf{Ric}}
\newcommand{\pt}{\frac{\partial}{\partial t}}
\newcommand{\cL}{\mathcal L}
\newcommand{\var}{\varepsilon}
\newcommand{\cF}{\mathcal F}
\newcommand{\M}{\mathbb M}
\newcommand{\B}{\mathbb B}
\newcommand{\cH}{\mathcal H}
\newcommand{\cV}{\mathcal V}
\newcommand{\R}{\mathbb R}
\begin{document}

\title[Harnack inequalities with transverse Ricci flow]{Harnack inequalities on totally geodesic foliations with transverse Ricci flow.}

\author[Feng, Qi]{Qi Feng{$^{\dag}$}}
\thanks{\footnotemark {$\dag$} Research was supported in part by NSF Grant  DMS-1660031.(F. Baudoin PI)}
\address{$^{\dag}$ Department of Mathematics\\
University of Connecticut\\
Storrs, CT 06269,  U.S.A.}
\email{qi.feng@uconn.edu}

\keywords{}


\date{\today }
\maketitle
\begin{abstract}
In the current paper,under the transverse Ricci flow on a totally geodesic Riemannian foliation, we prove two types of differential Harnack inequalities (Li-Yau gradient estimate) for the positive solutions of the heat equation associated with the time dependent horizontal Laplacian operators. We also get a time dependent version of the generalized curvature dimension inequality. As a consequence of aforementioned results, we also get parabolic Harnack inequalities and heat kernel upper bounds.
\end{abstract}
\tableofcontents

\section{Introduction}

In general, Ricci curvature, which is defined as the trace of the Riemannian curvature tensor, plays an important role in the study of Riemannian geometry. Recently, there are two other well known methods to define Ricci curvature lower bounds. One is by Bakry-\'Emery \cite{BakryEmery85} to use curvature dimension inequality to characterize Ricci curvature lower bound. The other one is by Sturm-Lott-Villani \cite{LottVillani09, Sturm06} to define the Ricci curvature lower bound through optimal transportation. These two notions of Ricci curvature are then extended to metric measure spaces and  are shown to be equivalent in certain cases \cite{AGS15}. Moreover, recently Naber-Haslhofer \cite{Naber13,HaslhoferNaber16ricci} characterized Ricci curvature two sided bound by using stochastic analysis on the path space of a Riemannian manifold.  The sub-Riemannian analogue of the curvature dimension inequality is the generalized curvature dimension inequality introduced by Baudoin-Garofalo \cite{BaudoinGarofalo14}. They proved the Li-Yau \cite{YauLi86} type inequality on sub-Riemannian manifolds by using the generalized curvature dimension inequality as well as other inequalities and heat kernel bounds. But the sub-Riemannian analogue of Sturm-Lott-Villani \cite{LottVillani09, Sturm06} is not true even for the simple case like Heisenberg group \cite{Juillet09}. However, the characterization of two sided Ricci curvature bounds through stochastic analysis on the path space of a totally geodesic foliation is true \cite{BaudoinFeng15}.

Another very important method to study Riemannian geometry is the Ricci flow introduced by Hamilton \cite{Hamilton82}, for a detailed study of Ricci flow we refer to \cite{ChowKnopf04}. The study of Ricci flow on a Riemannian manifold under the curvature dimension inequality
condition is recently studied  by Li-Li \cite{LiLi17}. The characterization of Ricci flow by using optimal transportation is carried out by McCann-Topping \cite{MccannTopping10}. Recently, Naber-Haslhofer-Hein \cite{HaslhoferNaber15weak, HeinNaber14} try to define the weak solution of Ricci flow through the analysis (and stochastic analysis) on the path space of a Riemannian manifold. The sub-Riemannian analogue of these aforementioned works are of particular interest. 

The first paper about Ricci flow on sub-Riemannian manifold is studied by Lovric-Min-Oo-Ruh \cite{LMR00}. They showed the existence of the Ricci flow on Riemannian foliaitons by flowing the Cartan connection. There are several recent study of Ricci flow on Riemannian foliations. For example:  mixed curvature Ricci flow on co-dimension one foliations  \cite{Rovenski13, Rovenski13partial}, Sasakian transverse Ricci flow \cite{Collins12, SWZ10}, general second order geometric flows on Riemannian foliations \cite{BHV15}. But in all these previous work,  they all have torsion free condition for the transverse Levi-Civita connection. The first non-torsion free case in the context of Ricci flow is the work by Phong-Picard-Zhang \cite{PhongPicardZhang15, PhongPicardZhang16} where they study anomaly flow with Strominger system and Fu-Yau equation on 3-dimensional complex manifolds. 

On the other side, sub-Riemannian geometry has drawn significant attention due to its advances connections to active research areas like stochastic analysis on manifolds \cite{BaudoinFeng15,BaudoinFengGordina17, GrongThalmaier16stochastic}, optimal transportation \cite{FigalliRifford10, KhesinLee09}, etc. In particular, the recent work about the generalized curvature dimension inequality \cite{BaudoinGarofalo14,GrongThalmaier16curvature1,GrongThalmaier16curvature2,BKW15} and sub-Laplacian comparison theorems \cite{BGKT17,AgrachevLee15} offer us powerful tools to conduct new research in sub-Riemannian geometry.

In this paper, we try to connect the two directions on Ricci flow and the generalized curvature dimension inequalities on sub-Riemannian manifolds, in particular on totally geodesic foliations. Due to the length of our paper, the paper will be divided into two parts. In this first paper, we first start to study Ricci flow under the non-torsion free condition on totally geodesic Riemannian foliations. We prove that the sub-Riemannian structure is preserved under this flow and prove various differential inequalities under this context. In the second paper \cite{Feng17}, we will show the long time existence of the Ricci flow, curvature bound estimates, monotonicity formulas, Perelman's non-collapsing condition and rigidity models in a sub-Riemannian setting. It is worth to point it out that the inequalities we proved in the current paper will play an important role in our second paper \cite{Feng17}.


In a series of future papers, the author is extremely interested to study in a more general sub-Riemannian setting of the connections among Ricci flow, generalized curvature dimension inequality, stochastic analysis and optimal transportation on the path space of a sub-Riemannian manifold.  The goal of developing aforementioned work is hoping to solve some \textbf{Open Problems} in sub-Riemannian geometry. One open problem was posted by A.V. Toponogov in 80's which is stated in \cite{Rovenski13partial}, see details therein. 

\textbf{Open Problem 1:} Given a Riemannian manifold $(\M^{m+n},g)$ foliated with complete totally geodesic leaves of dimension $m$. Assume that the sectional curvature of $\M$ are positive for all planes of two vectors such that the first (second) vector is tangent (orthogonal) to a leave. Then 
\begin{align}\label{Ferus estimate}
m\le \rho(n)-1
\end{align}
where $\rho(n)-1$ is the maximal number of point-wise linear independent vector fields on a sphere.
The estimate \eqref{Ferus estimate} is proved in \cite{Ferus70, Rovenski98} in some special case. There is another interesting estimate \cite{BaudoinGarofalo09} relating the Hausdorff dimension of $\M^{n+m}$, i.e. $dim_{Haus}(\M)\le D$, where $D$ is a constant related to the curvature bounds in the generalized curvature dimension inequality \cite{BaudoinGarofalo14}.

\textbf{Open Problem 2:} What is the relation between \eqref{Ferus estimate} and D? 

 We think that the improvements of this constant $D$ may give us sharp diameter estimate for \emph{Bonnet-Myer's} theorem in sub-Riemannian geometry.  
 
 \textbf{Open Problem 3:} What is the rigidity model for Perelman's \cite{Perelman02} type $\mathcal W$ entropy functional in sub-Riemannian geometry?
 A good candidate for such a rigidity model might be a Hopf fibration or quaternionic fibration.\\

 
In this paper we consider $\M$ as complete general Riemannian foliations with totally geodesic leaves \cite{Baudoin14} and bundle like metric \cite{Reinhart59}. A natural connection come with this Riemannian foliation structure is the so-called $Bott$ connection \cite{Besse07}, which coincides with the transverse $Levi$-$Civita$ connection acting on basic functions. We will study a pair of equations which are the transverse Ricci flow for the $Bott$ connection
   \beq\label{transverse RF}
 \begin{cases}
 & \frac{\partial g_{\cH}}{\partial t}=-2\ricci_{\cH},\\
 &\frac{\partial g_{\cV}}{\partial t}=0,
 \end{cases}
 \eeq
 and the heat equation associated with the time dependent horizontal Laplacian operator $L^t$ $(=\Delta_{g_{\cH}(t)})$ for metric $g(t)=g_{\cH}(t)\oplus g_{\cV}.$
 \begin{align}\label{heat equation}
 (L^t-\pt)u(x,t)=0,\quad x\in\M, t\in[0,T].
 \end{align}

   In section 2, we first introduce the basic notations and backgrounds on Riemannian foliations.    
   
   In section 3, we state our main results, comment on their relations and applications, as well as comparisons to previous work.

   In section 4, we first introduce the general transverse geometric flow
   \begin{align}\label{general RF 1}
\begin{cases}
&\pt g_{\cH}(t)=h(t),\quad g_{\var}(0)=g_{\var,0}=g_{\cH,0}\oplus \frac{1}{\var}g_{\cV,0},\\
&\pt g_{\cV}=0.
\end{cases}
\end{align}
   Under our assumptions on $h(t)$, we show that under our general transverse flow \eqref{general RF 1}, we can find local time dependent orthonormal frames and the Riemannian foliation structure with totally geodesic leaves and bundle like metric are preserved. In particular, if the Riemannian foliation satisfies the \emph{Yang-Mills} condition, the flow \eqref{general RF 1} will also preserve this property. Moreover, all these properties are also preserved under our transverse Ricci flow \eqref{transverse RF}, since $h=-2\ricci_{\cH}$ is a special case of flow \eqref{general RF 1}.

    In section 5, by using the local time dependent orthonormal frames on the manifold, we can deduce a time-dependent version of the Bochner-Weitzenb\"ock identity which gives us a time dependent version of the generalized curvature dimension inequality. In this section, we represent the positive solution to the heat equation \eqref{heat equation} at time $t$  in terms of the semigroup $P_{s,t}f$ with initial value $f$ at time $s$. By using this representation, we generalize the Li-Yau inequality in the generalized curvature dimension equality setting by Baudoin-Garofalo \cite{BaudoinGarofalo14} to our time dependent setting. As a consequence, we can get the parabolic Harnack inequality and heat kernel upper bound.
   
    In section 6, we prove another Li-Yau inequality which generalizes the result by Bailesteanu-Cao-Pulemotov \cite{BCP10} on a Riemannian manifold to our Riemannian foliation setting. Since the semigroup representation for the solution to the heat equation associated to the time dependent horizontal Laplacian operator do not clearly show the fact that the heat on the manifold spreads over the manifold and the manifold itself evolves in time at the same time, we thus directly look at  the solution $u(x,t)$ at time $t$. In order to get Li-Yau type inequality (as well as parabolic Harnack inequality) for the positive solution to the heat equation associated with the time dependent horizontal Laplacian operator, we combine the methods by Bailesteanu-Cao-Pulemotov \cite{BCP10} and Qian \cite{Qian13}. Eventually, the bound for our differential Harnack inequality will depends on a time dependent function $\alpha(t)$ and other geometric bounds on curvatures, which is similar to Qian \cite{Qian13} where he proved differential Harnack inequality for the positive solutions to the Sch\"odinger equation associated to the subelliptic operator with potential. By choosing a good candidate 
for the function $\alpha(t)$, we are able to get Li-Yau inequality and parabolic Harnack inequality which are comparable to the previous semigroup setting. In particular, our result generalize the space-time gradient estimates under Ricci flow on a Riemannian manifold in \cite{BCP10}  and what's more, our result actually works for transverse super Ricci flow. The author believes that similar results should also hold under transverse $k$-super Ricci flow, though this requires more work.

\section{Notation}
 \subsection{Riemannian foliation}
Let $\mathbb{M}$ be a smooth connected  and complete manifold of the dimension $n+m$. Assume that $\mathbb{M}$ is equipped with a Riemannian foliation structure, $\mathcal{F}$,  with a bundle-like   metric $g$ and totally geodesic $m$-dimensional leaves. We denote $(\M,\mathcal F, g)$ as such a Riemannian foliation and for details we refer to \cite{Baudoin14, Molino88, Reinhart59, Tondeur88}.

Denote subbundle $\mathcal{V}$ as the set of \emph{vertical directions}  formed by the vectors tangent to the leaves and denote the subbundle $\mathcal{H}$ as the set of \emph{horizontal directions} which is normal to $\mathcal{V}$. We assume that the Lie algebra of vector fields generated by global $C^{\infty}$ sections of $\mathcal{H}$ has the full rank at each point in $\M$, which means that $\mathcal{H}$ satisfies the bracket generating condition.

 In this paper, denote $T\M$ as the \emph{tangent bundle} and $T^{\ast}\M$ as the \emph{cotangent bundle}. In particular, we denote $T_{x}\M$ ($T_{x}^{\ast}\M$) as the \emph{tangent (cotangent) space} at $x \in \M$. Denote $g\left( \cdot, \cdot \right)$,
 $g_{\mathcal{H}}\left( \cdot, \cdot \right)$, $g_{\mathcal{V}}\left( \cdot, \cdot \right)$ as the inner product on $T\M$ induced by the metric $g$ and its restrictions to $\mathcal{H}$ and $\mathcal{V}$ respectively. As always, for any $x \in \M$ denote by $g\left( \cdot, \cdot \right)_{x}$ (or $\langle \cdot, \cdot \rangle_x$), $g_{\mathcal{H}}\left( \cdot, \cdot \right)_{x}$ (or $\langle \cdot, \cdot \rangle_{\mathcal{H}_x}$), $g_{\mathcal{V}}\left( \cdot, \cdot \right)_{x}$  (or $\langle \cdot, \cdot \rangle_{\mathcal{V}_x}$) the inner product on the fibers $T_{x}\M$, $\mathcal{H}_{x}$ and $\mathcal{V}_{x}$ correspondingly. Let $\mathcal C^\infty(\M)$ denote as the space of \emph{smooth functions} on $\M$ and $\mathcal C_0^\infty(\M)$ denote as the space of \emph{smooth and compactly supported functions}. Let $\Gamma^\infty(\mathcal{E})$ denote as the space of \emph{smooth sections} of a vector bundle $\mathcal{E}$ over $\M$ and $\Gamma_0^\infty(\mathcal{E})$ denote as the space of \emph{smooth and compactly supported sections}.
\subsection{Bott connection}
On the Riemannian manifold $(\M,g)$ there is the Levi-Civita connection that we denote by $\nabla^R$, but the suitable connection which is adapted to our study of foliations is the \emph{Bott connection} on $\mathbb{M}$. Define the \emph{Bott connection} \cite{Besse07} in the following way,
\begin{align}\label{Bott connection}
\nabla_X Y =
\begin{cases}
\pi_{\mathcal{H}} ( \nabla_X^R Y), X, Y \in \Gamma^\infty(\mathcal{H}),
\\
\pi_{\mathcal{H}} ( [X,Y]),  X \in \Gamma^\infty(\mathcal{V}), Y \in \Gamma^\infty(\mathcal{H}),
\\
\pi_{\mathcal{V}} ( [X,Y]),  X \in \Gamma^\infty(\mathcal{H}), Y \in \Gamma^\infty(\mathcal{V}),
\\
\pi_{\mathcal{V}} ( \nabla_X^R Y), X, Y \in \Gamma^\infty(\mathcal{V}),
\end{cases}
\end{align}
where $\pi_\mathcal{H}$ (resp. $\pi_\mathcal{V}$) is the projection on $\mathcal{H}$ (resp. $\mathcal{V}$). It is easy to check that  the \emph{Bott} connection is metric-compatible, that is, $\nabla g=0$, and it is not torsion-free. We denote $T$ to be the torsion of the $Bott$ connection $\nabla$. For more properties of the torsion $T$, we refer to \cite{BaudoinFengGordina17}.

For any bundle like metric (see \cite{Reinhart59}) $g$, we can always have a orthogonal decomposition as $g=g_{\cH}\oplus g_{\cV}.$ 
In this paper, we will consider a family of one parameter variation of the metric $g$ as 
\[
g_{\var}=g_{\cH}\oplus \frac{1}{\var}g_{\cV}\quad \text{where} ~\var>0.
\]
One can check that $\nabla g_{\var}=0$ for every $\var>0$. The metric $g_{\var}$ introduces a metric on the cotangent  bundle which we still denote by $g_{\var}$. By using the similar notations as before we have 
\[
\|\eta\|_{\var}^2=\|\eta\|_{\cH}^2+\var\|\eta\|_{\cV}^2,\quad \forall ~~\eta\in T^*\M.
\]
We say that a one-form is horizontal (resp. vertical) if it vanishes on the vertical bundle $\cV$ (resp. on $\cH$).
We define our horizontal Ricci curvature $\mathfrak{Ric}_{\mathcal{H}}$(or $\ricci_{g_{\cH}(t)}$) of $Bott$ connection as the fiberwise symmetric linear map on one-forms such that for all smooth functions $f, g$ on $\M$
\[
\langle  \mathfrak{Ric}_{\mathcal{H}} (df), dg \rangle=\mathbf{Ric} \left(\nabla_\mathcal{H} f, \nabla_\mathcal{H} g\right)=\mathbf{Ric}_\mathcal{H} ( \nabla f, \nabla g),
\]
where $\mathbf{Ric}$ is the Ricci curvature of the $Bott$ connection and $\mathbf{Ric}_\mathcal{H}$ its horizontal Ricci curvature (horizontal trace of the full curvature tensor $R$  of the $Bott$ connection ). For each $Z \in \Gamma^\infty(\mathcal{V})$ there is a unique skew-symmetric endomorphism  $ J_Z: \mathcal{H}_x \to \mathcal{H}_x$, $x \in \M$ such that for all horizontal vector fields $X, Y \in \mathcal{H}_{x}$
\begin{align}\label{J map}
g_\mathcal{H} ( J_Z (X), Y)_{x}= g_\mathcal{V} (Z, T(X, Y))_{x},
\end{align}
where $T$ is the torsion tensor of $\nabla$ and we extend $J_{Z}$ to be $0$ on  $\mathcal{V}_x$. We now introduce the following so-called damped connections (see \cite{Baudoin14}) $\nabla^{\var}$ as
\[
\nabla^{\var}_XY=\nabla_XY-\mathfrak T^{\var}_XY=\nabla_XY-T(X,Y)+\frac{1}{\var}J_YX, \quad X,Y\in\Gamma^{\infty}(\M).
\]
In this paper, our metric $g(t)$ evolves in time, according to Lemma \ref{ON frame}, we know that there exists orthonormal frames depending on time $t$. We denote $\{X_1(t),\cdots,X_n(t)\}$ as the horizontal time dependent orthonormal frames and $\{Z_1,\cdots,Z_m\}$ as the vertical time independent orthonormal frames (for details, we refer to Lemma \ref{ON frame} in section $4$). In particular, we can represent a generic one-form as $\eta=\sum_{i=1}^nf_i\theta_i(t)+\sum_{j=1}^mk_j v_j$,
where $\{ \theta_1(t),\cdots,\theta_n(t),v_1,\cdots,v_m\}$ is the dual coframe of $\{X_1(t),\cdots,X_n(t),Z_1,\cdots,Z_m \}$

Then for metric $g(t)$ and the associated $Bott$ connection $\nabla^t$, we have the following time dependent horizontal laplacian operator
 \[
 L^t=-\nabla^*_{g_{\cH}(t)}\nabla_{g_{\cH}(t)}(\text{or}=-\nabla_{\cH}^*\nabla_{\cH}), 
 \]
 by using the local coordinates \ref{ON frame}, we can represent $L^t$ by
 \[
 L^t=\sum_{i=1}^n\nabla^t_{X_i(t)}\nabla^t_{X_i(t)}-\nabla^t_{\nabla^t_{X_i(t)}X_i(t)}.
 \]
 Recall our definition of $J$. If $Z_1,\cdots,Z_m$ is a local vertical frame, then $J$ define a $(1,1)$ tensor 
\[
\mathbf J^2:=\sum_{j=1}^m J_{Z_j}J_{Z_j}.
\]
This does not depend on the choice of the frame and may be defined globally. The horizontal divergence of the torsion $T$ is also a $(1,1)$ tensor which in a local horizontal frame $\{X_1(t),\cdots,X_n(t)\}$ is defined as
\[
\delta_{\cH}T(X):=-\sum_{i=1}^n(\nabla^t_{X_i(t)}T)(X_i(t),X).
\]
In particular, for a generic one form $\eta=\sum_{i=1}^nf_i\theta_i(t)+\sum_{j=1}^mk_j v_j$, we have
\[
\delta_{\cH}T(\eta)=\sum_{i,j=1}^n\sum_{l=1}^m (X_i(t)\gamma_{ij}^l(t))f_jv_l,
\]
where $\gamma_{ij}^l(t)$ is from the structure equation in lemma \ref{ON frame}, a static version of this expression can be found in \cite{BKW15}.
In particular, if $\delta_{\cH}T(\cdot)=0$, we say that it satisfies the \emph{Yang-Mills} condition.
With all the above definition in hand, we are ready to introduce the following operator
 \[
 \square_{\var}^t=-(\nabla_{g_{\cH}(t)}-\frak T^{\var}_{g_{\cH}(t)})^*(\nabla_{g_{\cH}(t)}-\frak T^{\var}_{g_{\cH}(t)})-\frac{1}{\var}\mathbf J^2(t)+\frac{1}{\var}\delta_{\cH}T(t)-\mathfrak{Ric}_{g_{\cH}(t)}.
 \]
 Here we denote $\mathbf J^2(t)$ and $\delta_{\cH}T(t)$ to emphasize their dependence on time. A time independent version of the above operator acting on one forms can be found in \cite{BKW15}.
 
 Now we introduce the \emph{carr\'e du champ} operator \cite{BakryEmery85} associated with the time dependent horizontal Laplacian operator $L^t$.  For $f,g\in \mathcal C^{\infty}(\M)$,  
  \begin{align}\label{Gamma}
  \begin{split}
& \Gamma^t(f,g)=\frac{1}{2}(L^t(fg)-fL^tg-gL^tf)=g_{\cH}(\nabla_{g_{\cH}(t)}f,\nabla_{g_{\cH}(t)}g)=<df,dg>_{g_{\cH}},\\
 &\Gamma^{\cV}(f,g)=g_{\cV}(\nabla_{\cV}f,\nabla_{\cV}g)=<df,dg>_{g_{\cV}}.
 \end{split}
 \end{align}
 Their iteration are defined as 
 \begin{align}\label{Gamma 2}
 \begin{split}
 \Gamma^t_2(f,g)&=\frac{1}{2}(L^t\Gamma^t(f,g)-\Gamma^t(L^tf,g)-\Gamma^t(f,L^tg)),\\
 \Gamma^{\cV}_2(f,g)&=\frac{1}{2}(L^t\Gamma_{\cV}(f,g)-\Gamma_{\cV}(L^tf,g)-\Gamma_{\cV}(f,L^tg)).
 \end{split}
  \end{align}
Both $\square_{\var}^t$ and the \emph{carr\'e du champ} operators will play important role in the Bochner's identity and the generalized curvature dimension inequality.
 
\section{Statement of main results.}
Let $(\M^{n+m},\mathcal F,g_0)$ be a $n+m$-dimensional complete manifold, equipped with bundle like metric $g_0$ \cite{Reinhart59} and totally geodesic foliation structure $\mathcal F$ \cite{Tondeur88,Molino88 ,Baudoin14}. We denote $g_{\var,0}=g_{\cH,0}\oplus \frac{1}{\var}g_{\cV,0}$ as the one parameter family variation of the initial metric $g_0=g_{\cH,0}\oplus g_{\cV,0}$. In general, we assume that the metric evolves in time, we denote the evolution equation as 
\beq \label{general RF}
\begin{cases}
&\pt g_{\cH}(t)=h(t),\quad g_{\var}(0)=g_{\var,0}=g_{\cH,0}\oplus \frac{1}{\var}g_{\cV,0},\\
&\pt g_{\cV}=0.
\end{cases}
\eeq
Assuming that $h(t)$ is a symmetric $(0,2)$ tensor and $h(X,Z)=0$ if $X\in\Gamma^{\infty}(\cV)$ or $Z\in\Gamma^{\infty}(\cV)$, then we can show the bundle like metric and totally geodesic foliation structures are preserved under this flow. Furthermore, if we assume the \emph{Yang-Mills} condition $\delta_{\cH}T(\cdot)=0$ at time $t=0$, the flow will also preserve the \emph{Yang-Mills} condition for any time $t$.\\
We then get a time dependent Bochner-Weitzenb\"ock formula for operator
  \[
 \square_{\var}^t=-(\nabla_{g_{\cH}(t)}-\frak T^{\var}_{g_{\cH}(t)})^*(\nabla_{g_{\cH}(t)}-\frak T^{\var}_{g_{\cH}(t)})-\frac{1}{\var}\mathbf J^2(t)+\frac{1}{\var}\delta_{\cH}T(t)-\mathfrak{Ric}_{g_{\cH}(t)},
 \]
 which is introduced in the previous section.
 \begin{thm}\label{Bochner}
 Let $f\in\mathcal C^{\infty}(\M)$, under the general transverse flow \eqref{general RF}
 \[
 dL^tf=\square_{\var}^tdf,
 \]
 and for $\eta=df$, we have the following time dependent Bochner's inequality
 \begin{align}\label{Bochner's inequality}
 \begin{split}
& \frac{1}{2}L^t\|\eta\|_{\var}^2-<\square_{\var}^t\eta,\eta>_{\var}\\ 
&\geq \frac{1}{n}(\mathbf{Tr}_{\cH}\nabla^{t,\sharp}_{g_{\cH}(t)}\eta)^2-\frac{1}{4}\mathbf{Tr}_{\cH}(\mathbf J^2_{\eta})+<\mathfrak{Ric}_{g_{\cH}(t)}(\eta),\eta>_{\cH}
-<\delta_{\cH}T(\eta),\eta>_{\cV}+\frac{1}{\var}<\bf J^2(\eta),\eta>_{\cH} .
\end{split}
 \end{align}
 \end{thm}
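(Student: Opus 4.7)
The plan is to reduce both assertions to pointwise-in-$t$ computations in the time-dependent orthonormal frame $\{X_1(t),\ldots,X_n(t),Z_1,\ldots,Z_m\}$ provided by Lemma \ref{ON frame}. For each fixed $t$, the data $(g_{\var}(t),\nabla^t,T,J)$ form a sub-Riemannian package of exactly the type treated in the static Bochner-Weitzenb\"ock identity of Baudoin-Kim-Wang \cite{BKW15}, so the statement amounts to carrying out that computation at time $t$ with time-dependent coefficients. Since the Bochner formula is a pointwise-in-$t$ identity (no $\partial_t$ appears on either side), the flow equation \eqref{general RF} enters only through the preservation results of Section 4 and through Lemma \ref{ON frame}, which together guarantee that $\{X_i(t)\}$ remains an honest horizontal orthonormal frame and that all structural quantities evolve smoothly in $t$.

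For the commutation identity $dL^t f = \square_\var^t df$, I would start from the local expression $L^t f = \sum_i(\nabla^t_{X_i(t)}\nabla^t_{X_i(t)} f - \nabla^t_{\nabla^t_{X_i(t)} X_i(t)} f)$, apply the exterior derivative, and repeatedly commute $d$ through the Bott covariant derivatives using the Ricci identity $(\nabla^t_X \nabla^t_Y - \nabla^t_Y \nabla^t_X - \nabla^t_{[X,Y]}) df = R^t(X,Y)df$. Since $\nabla^t$ has non-trivial torsion $T$, additional terms appear. Tracing over $i$, the curvature contributions assemble into $-\mathfrak{Ric}_{g_\cH(t)}(df)$; the torsion contributions split into a piece quadratic in $T$ that combines with the rough horizontal Laplacian to produce the damped rough Laplacian $-(\nabla_{g_\cH(t)}-\mathfrak T^\var_{g_\cH(t)})^*(\nabla_{g_\cH(t)}-\mathfrak T^\var_{g_\cH(t)})$ plus a $-\frac{1}{\var}\mathbf J^2(t)$ correction (using the defining identity \eqref{J map} of $J_Z$), and a piece linear in $\nabla T$ that yields $\frac{1}{\var}\delta_\cH T(t)$. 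This is the non-torsion-free, time-dependent analogue of the classical Weitzenb\"ock identity.

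For the Bochner inequality, I would compute $\frac{1}{2}L^t\|\eta\|_\var^2$ with $\eta=df$ directly from the Leibniz rule for $L^t$ and the expansion $\eta=\sum_i f_i\theta_i(t)+\sum_j k_j v_j$ in the dual coframe. The result is a sum of squared horizontal Hessian components $(\nabla^2_{ij}f)^2$ and squared mixed horizontal-vertical components $((\nabla^t_{X_i(t)}df)(Z_j))^2$, plus first-order terms that, after subtracting $\langle \square_\var^t \eta,\eta\rangle_\var$ using step two, reproduce exactly the curvature and torsion terms on the right-hand side. The purely horizontal squares yield $\frac{1}{n}(\mathbf{Tr}_\cH \nabla^{t,\sharp}_{g_\cH(t)}\eta)^2$ via Cauchy-Schwarz applied to the trace. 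The mixed squares couple to the $J$-tensor through \eqref{J map}: completing the square of expressions of the form $(\nabla^t_{X_i(t)}df)(Z_j)-\tfrac{1}{2}J_{Z_j}(\eta)(X_i)$ produces the correction $-\frac{1}{4}\mathbf{Tr}_\cH(\mathbf J^2_\eta)$, while the residual cross terms combine with the $\mathbf J^2(t)$ contribution from the damped rough Laplacian to give $+\frac{1}{\var}\langle \mathbf J^2(\eta),\eta\rangle_\cH$. The term $\frac{1}{\var}\delta_\cH T(t)$ inside $\square_\var^t$ sends horizontal one-forms to vertical ones, so its coupling to $\eta$ through $\langle\cdot,\cdot\rangle_\var$ absorbs the $\frac{1}{\var}$ against the $\var$ weight of the vertical inner product, producing the clean term $-\langle \delta_\cH T(\eta),\eta\rangle_\cV$.

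The main obstacle is the bookkeeping of the $\var$-dependent contributions: after trading the rough Laplacian for the damped version and grouping all cross terms generated by commuting $d$ past the horizontal double derivatives, the $\frac{1}{\var^2}$ contributions must cancel exactly while the surviving $\frac{1}{\var}$ pieces must reassemble with the correct sign. Equivalently, one must verify that the completion-of-squares argument of \cite{BKW15} carries over verbatim in the time-dependent frame; this requires Lemma \ref{ON frame} to ensure that the structure equations governing $\nabla^t_{X_i(t)}X_j(t)$ retain the same antisymmetry relations as in the static setting, so that the algebraic identities linking $J$, $T$, and the horizontal Hessian remain valid at each $t$.
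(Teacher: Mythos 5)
Your proposal matches the paper's approach exactly: the paper's proof consists of invoking Lemma \ref{ON frame} and the Section~4 structure-preservation results to reduce the statement to a pointwise-in-$t$ application of the static computations in \cite{BKW15}[Theorems 3.1, 5.1] with the time-dependent orthonormal frame. Your expansion of what those computations entail (the torsionful Weitzenb\"ock commutation, the $J$-completion of squares, the $\var$-bookkeeping) fills in detail the paper leaves implicit, but the strategy is the same.
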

 Then in section $5,6,7$, we consider the following transverse Ricci flow on $(\M^{n+m},\mathcal F, g)$, which has the following form
 \begin{align*}
 \begin{cases}
 & \frac{\partial g_{\cH}}{\partial t}=-2\ricci_{\cH},\quad g_{\var}(0)=g_{\var,0}=g_{\cH,0}\oplus \frac{1}{\var}g_{\cV,0}, \\
 &\frac{\partial g_{\cV}}{\partial t}=0.
 \end{cases}
 \end{align*}
 Namely we take $h(t)=-2\ricci_{\cH}(t)$, where $\ricci_{\cH}(t)$ is the horizontal Ricci curvature for metric $g(t)$. 
 
Under further assumption that for any horizontal one form $\eta_1$, vertical one form $\eta_2$ and constants $\rho_1,\kappa, \rho_2>0$
 \begin{align}\label{assumption 1}
 \begin{split}
 <\mathfrak{Ric}_{\cH}(\eta_1),\eta_1>_{\cH}&\geq \rho_1\|\eta_1\|_{\cH}^2,\\
  -<\bf J^2\eta_1,\eta_1>&\leq \kappa \|\eta_1\|_{\cH}^2,\\
  -\frac{1}{4}\bf{Tr}_{\cH}(\bf J^2_{\eta_2})&\geq \rho_2\|\eta_2\|_{\cV}^2,
  \end{split}
 \end{align}
  and Yang-Mills condition $\delta_{\cH}T(\cdot)=0$. We can prove a time dependent version of the generalized curvature dimension inequality \cite{BKW15}[Theorem 5.1] in the following sense. For details, we refer to section $5$.
 \begin{thm}\label{generalized CD inequality}
 For every $f,g\in \mathcal C^{\infty}(\M)$ and $\var>0,$
 \[
 \Gamma^t_2(f,f)+\var\Gamma^{\cV}_2(f,f)\geq \frac{1}{n}(L^tf)^2+(\rho_1-\frac{\kappa}{\var})\Gamma^t(f,f)+\rho_2\Gamma^{\cV}(f,f),
 \]
 and 
 \[
 \Gamma^t(f,\Gamma^{\cV}(f))=\Gamma^{\cV}(f,\Gamma^t(f)).
 \]
 \end{thm}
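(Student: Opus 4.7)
The plan is to reduce Theorem \ref{generalized CD inequality} directly to the time-dependent Bochner--Weitzenb\"ock inequality of Theorem \ref{Bochner}, then insert the pointwise curvature assumptions \eqref{assumption 1} together with the Yang--Mills condition; the commutation identity is handled separately using the adapted frame from Lemma \ref{ON frame}.

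First I would take $\eta = df$ in Theorem \ref{Bochner}. Using the identity $dL^t f = \square_\varepsilon^t df$ from that theorem, the pairing on the LHS becomes
\[
\langle \square_\varepsilon^t df, df\rangle_\varepsilon = \langle dL^t f, df\rangle_\cH + \varepsilon \langle dL^t f, df\rangle_\cV = \Gamma^t(L^t f, f) + \varepsilon\, \Gamma^\cV(L^t f, f),
\]
while the Laplacian term in the LHS rewrites as
\[
\tfrac{1}{2} L^t \|df\|_\varepsilon^2 = \tfrac{1}{2} L^t \Gamma^t(f,f) + \tfrac{\varepsilon}{2} L^t \Gamma^\cV(f,f).
\]
Combining the two via the definitions \eqref{Gamma 2}, the entire left-hand side of \eqref{Bochner's inequality} collapses to exactly $\Gamma^t_2(f,f) + \varepsilon\, \Gamma^\cV_2(f,f)$, which is the left side of the target inequality.

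For the right-hand side of \eqref{Bochner's inequality}, I would decompose $df = (df)_\cH + (df)_\cV$ and process each term using the adapted orthonormal frame of Lemma \ref{ON frame}. The horizontal trace of $\nabla^{t,\sharp}_{g_\cH(t)} df$ equals $L^t f$, producing the $\tfrac{1}{n}(L^t f)^2$ term. The horizontal Ricci term is bounded below by $\rho_1 \|(df)_\cH\|_\cH^2 = \rho_1 \Gamma^t(f,f)$ using the first line of \eqref{assumption 1}. The $\tfrac{1}{\varepsilon} \langle \mathbf{J}^2(df), df\rangle_\cH$ term is bounded below by $-\tfrac{\kappa}{\varepsilon}\Gamma^t(f,f)$ using the second line of \eqref{assumption 1} (noting $\mathbf J$ vanishes on the vertical part). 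The term $-\tfrac{1}{4}\mathbf{Tr}_\cH(\mathbf J^2_{df})$ reduces to $-\tfrac{1}{4}\mathbf{Tr}_\cH(\mathbf J^2_{(df)_\cV})$ because $\mathbf J$ is trivial on horizontal covectors, and by the third line of \eqref{assumption 1} this is at least $\rho_2 \|(df)_\cV\|_\cV^2 = \rho_2 \Gamma^\cV(f,f)$. Finally, the Yang--Mills assumption $\delta_\cH T(\cdot)=0$ kills the torsion divergence term $-\langle \delta_\cH T(df), df\rangle_\cV$. Adding these four contributions gives precisely $\tfrac{1}{n}(L^t f)^2 + (\rho_1 - \tfrac{\kappa}{\varepsilon})\Gamma^t(f,f) + \rho_2 \Gamma^\cV(f,f)$, proving the first inequality.

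For the commutation identity $\Gamma^t(f, \Gamma^\cV(f)) = \Gamma^\cV(f, \Gamma^t(f))$, I would work in the time-dependent adapted frame $\{X_1(t),\dots,X_n(t), Z_1,\dots,Z_m\}$ of Lemma \ref{ON frame}; the key structural input is that the flow preserves the bundle-like metric and totally geodesic foliation (Section 4), so the vertical bundle $\cV$ and its time-independent frame $\{Z_j\}$ are unchanged, and the horizontal frame remains orthonormal for $g_\cH(t)$. Expanding both sides gives
\[
\Gamma^t(f,\Gamma^\cV(f)) - \Gamma^\cV(f,\Gamma^t(f)) = 2\sum_{i,j}(X_i(t) f)(Z_j f)\,[X_i(t), Z_j] f.
\]
The totally geodesic condition combined with the bundle-like property forces $[Z_j, X_i(t)]$ to be horizontal (this is a standard consequence recorded in Section 2 for the Bott connection, and is preserved by the flow). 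Then the resulting double sum vanishes by the antisymmetry of the structure coefficients $c_{ij}^{k}(t)$ of $[X_i(t),Z_j] = \sum_k c_{ij}^k(t) X_k(t)$ when paired against the symmetric tensor $(X_i(t) f)(X_k(t) f)$ after relabeling, giving the desired identity.

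The main obstacle I anticipate is the bookkeeping in separating the $\mathbf J^2$ contributions of \eqref{Bochner's inequality} into horizontal versus vertical halves of $df$, and matching them correctly to the three inequalities of \eqref{assumption 1}; a clean handling relies on the fact that $J_Z$ vanishes on $\cV$ and that $\mathbf J^2_\eta$ depends only on the vertical component of $\eta$, observations that must be used carefully but which follow from \eqref{J map}.
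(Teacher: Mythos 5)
Your reduction of the first inequality to Theorem \ref{Bochner} is exactly the intended argument: taking $\eta=df$, the left side of \eqref{Bochner's inequality} collapses to $\Gamma^t_2(f,f)+\varepsilon\Gamma^\cV_2(f,f)$ via \eqref{Gamma 2}, the trace term gives $\frac{1}{n}(L^tf)^2$, and the three curvature bounds in \eqref{assumption 1} together with the Yang--Mills condition absorb the remaining terms. This is precisely what the paper does (it defers to the static computation of \cite{BKW15} carried out with the time-dependent frames of Lemma \ref{ON frame}), so the first half is fine.

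The argument for the commutation identity contains a genuine error. You correctly arrive at
\[
\Gamma^t(f,\Gamma^\cV(f))-\Gamma^\cV(f,\Gamma^t(f))=2\sum_{i,j}(X_i(t)f)(Z_jf)\,[X_i(t),Z_j]f,
\]
but you then assert that $[Z_j,X_i(t)]$ is horizontal with an expansion $[X_i(t),Z_j]=\sum_k c_{ij}^k(t)X_k(t)$. This is backwards. Lemma \ref{ON frame} states
\[
[X_i(t),Z_k]=\sum_{j=1}^m\beta_{ik}^j(t)\,Z_j,
\]
i.e.\ the bracket of a basic horizontal frame field with a vertical one is \emph{vertical}; this is exactly what underlies the Christoffel symbol $\nabla^t_{Z_j}X_i(t)=0$ for the Bott connection (since $\nabla_{Z}X=\pi_\cH([Z,X])$). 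Consequently the vanishing of the double sum does not come from any antisymmetry in horizontal indices $(i,k)$ paired with $(X_if)(X_kf)$; instead one substitutes the vertical expansion to get
\[
\sum_{i,j,k}(X_i(t)f)(Z_jf)(Z_kf)\,\beta_{ij}^k(t),
\]
which vanishes because $\beta_{ij}^k(t)=-\beta_{ik}^j(t)$ (antisymmetry in the \emph{vertical} indices $j,k$ from Lemma \ref{ON frame}) while $(Z_jf)(Z_kf)$ is symmetric in $j,k$. Your conclusion is therefore correct, but the stated mechanism---a horizontal bracket and antisymmetry in the horizontal indices---is not; replacing it by the vertical expansion and the $\beta$-antisymmetry fixes the proof.
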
 
 As an application of Theorem \ref{Bochner} and Theorem \ref{generalized CD inequality}, we can prove the following entropy inequality and gradient estimates using the semigroup representation for the solution of heat equation \eqref{heat equation}.
  \begin{align*}
 (L^t-\pt)u(x,t)=0,\quad x\in\M, t\in[0,T].
 \end{align*}
 We denote $P_{s,T}f$ as the solution at time $T$ with initial value $f$ at time $s.$
 \begin{thm}\label{entropy inequality}Given a function $f\in \mathcal C_b^{\infty}(\M)$ and $\var>0$, we let $f_{\var}=f+\var$.
\begin{align*}
&a(T)P_{s,T}(f_{\var}\Gamma^t(\ln f_{\var}))+b(T)P_{s,T}(f_{\var}\Gamma^{\cV}(\ln f_{\var}))-(P_{s,T}f_{\var})(a(s)\Gamma^t(\ln P_{s,T}f_{\var})+b(s)\Gamma^{\cV}(\ln P_{s,T}f_{\var}))\\
&\geq \int_s^T (a'-2\kappa\frac{a^2}{b}-\frac{4a\gamma}{d})\Phi_1d\tau+\int_s^T(b'+2\rho_2a)\Phi_2d\tau+(\frac{4}{d}\int_s^T a\gamma d\tau)L^tP_{s,T}f_{\var}\\
&-(\frac{2}{d} \int_s^T a\gamma^2d\tau)P_{s,T}f_{\var}.
\end{align*}
\end{thm}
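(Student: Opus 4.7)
I would use the standard entropy-dissipation / Bakry-\'Emery interpolation, adapted to the time-dependent semigroup $P_{s,\tau}$. For $s \leq \tau \leq T$, set $u(\tau,\cdot) := P_{\tau,T}f_{\var}$, which satisfies the backward heat equation $\partial_\tau u = -L^\tau u$ with terminal value $u(T) = f_{\var}$. Define the interpolating functional
\[
G(\tau) := P_{s,\tau}\bigl[a(\tau)\,u\,\Gamma^\tau(\ln u) + b(\tau)\,u\,\Gamma^{\cV}(\ln u)\bigr].
\]
Since $P_{s,s}=\mathrm{id}$, a direct substitution shows that $G(T)$ equals the first two terms of the stated LHS and $G(s)$ equals the subtracted third term; the LHS is therefore exactly $G(T)-G(s) = \int_s^T G'(\tau)\,d\tau$, and the problem reduces to producing a pointwise lower bound on $G'(\tau)$ and integrating.

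To differentiate $G$, I would use the commutation $\partial_\tau P_{s,\tau}[\Psi(\tau)] = L^\tau P_{s,\tau}[\Psi(\tau)] + P_{s,\tau}[\partial_\tau \Psi(\tau)]$ together with the product rule, tracking three sources of $\tau$-dependence inside $\Psi$: the scalars $a,b$; the metric-dependent carr\'e $\Gamma^\tau$, for which the transverse Ricci flow gives $\partial_\tau \Gamma^\tau(w,w) = 2\,\langle \mathfrak{Ric}_\cH\,dw, dw\rangle_\cH$; and $u(\tau)$ via the backward heat equation. Standard rewrites such as $u\,\Gamma^\tau(\ln u, L^\tau \ln u) = \Gamma^\tau(u, L^\tau \ln u) - L^\tau u\cdot\Gamma^\tau(\ln u)$ recast the second-order terms so that the time-dependent Bochner-Weitzenb\"ock identity (Theorem \ref{Bochner}) replaces them by $\Gamma_2^\tau(\ln u)$ and $\Gamma_2^{\cV}(\ln u)$. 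Crucially, the horizontal-Ricci contribution from $\partial_\tau\Gamma^\tau$ cancels the $\mathfrak{Ric}_\cH$ term in the Bochner identity---this is the characteristic simplification produced by working under the transverse Ricci flow rather than a fixed metric.

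I then apply the time-dependent generalized curvature-dimension inequality (Theorem \ref{generalized CD inequality}) to $\ln u$,
\[
\Gamma_2^\tau(\ln u) + \var\,\Gamma_2^{\cV}(\ln u) \geq \tfrac{1}{n}(L^\tau \ln u)^2 + (\rho_1-\tfrac{\kappa}{\var})\Gamma^\tau(\ln u) + \rho_2\Gamma^{\cV}(\ln u),
\]
and use the elementary Young bound $(L^\tau \ln u)^2 \geq 2\gamma L^\tau \ln u - \gamma^2$ to linearize the quadratic piece. Transporting through $P_{s,\tau}$ and using $P_{s,\tau}[u(\tau)] = P_{s,T}f_{\var}$ converts the linearized term into the two tail contributions $\tfrac{4}{d}\int_s^T a\gamma\,d\tau \cdot L^t P_{s,T}f_{\var}$ and $-\tfrac{2}{d}\int_s^T a\gamma^2\,d\tau \cdot P_{s,T}f_{\var}$, where $d=n$. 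A Young-type estimate controls the cross term produced by the $-\kappa/\var$ coefficient, yielding the $-2\kappa a^2/b$ contribution to the coefficient of $\Phi_1 := P_{s,\tau}[u\,\Gamma^\tau(\ln u)]$; the remaining collection of $a',b',\rho_2 a$ terms identifies the coefficients of $\Phi_1$ and $\Phi_2 := P_{s,\tau}[u\,\Gamma^{\cV}(\ln u)]$ as stated.

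The principal obstacle will be the careful bookkeeping required when multiple time-derivatives act simultaneously: one must verify that the horizontal Ricci contribution from $\partial_\tau\Gamma^\tau$ exactly cancels the $\mathfrak{Ric}_\cH$ appearing in Bochner's identity so that only the CD-inequality skeleton with coefficients $\rho_1,\kappa,\rho_2$ survives. A secondary delicacy is calibrating the Young parameter $\gamma(\tau)$ and handling the residual $L^\tau G(\tau)$ term coming from $\partial_\tau P_{s,\tau}[\Psi(\tau)]$ so that the coefficients in front of $L^t P_{s,T}f_{\var}$ and $P_{s,T}f_{\var}$ match precisely those in the statement.
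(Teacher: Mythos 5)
Your proposal is essentially the same argument the paper gives. The paper packages the ``differentiate an interpolating functional and integrate'' step into Proposition~\ref{time parabolic com}, a parabolic comparison result proved via the martingale property of the diffusion with generator $L^t$; your $G(\tau)=P_{s,\tau}\bigl[a(\tau)u\,\Gamma^\tau(\ln u)+b(\tau)u\,\Gamma^{\cV}(\ln u)\bigr]$ with $G(T)-G(s)=\int_s^T G'(\tau)\,d\tau$ is the deterministic shadow of that step, and the paper's $\Phi=a\Phi_1+b\Phi_2$ is exactly your integrand before transport by $P_{s,\tau}$. The remaining ingredients you list---the evolution lemma for $(\partial_t+L^t)\Phi_i$, the cancellation of the horizontal Ricci term in the Bochner identity against $\partial_t g_{\cH}=-2\ricci_{\cH}$, the generalized CD inequality applied with the interpolation parameter $\var=b/a$ (which is where the $-2\kappa a^2/b$ coefficient comes from, rather than from a separate Young step), and finally $(L^\tau\ln u)^2\ge 2\gamma L^\tau\ln u-\gamma^2$---are precisely the steps in the paper's proof of Theorem~\ref{entropy inequality}. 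So this is a faithful reconstruction, not a new route.
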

 
\begin{thm}\label{gradient estimate}
Let $b:[0,T]\rightarrow [0,\infty)$ be non-increasing $\mathcal C^2$ function such that, with
\[
\gamma=^{def}\frac{d}{4}(\frac{b''}{b'}+\frac{\kappa}{\rho_2}\frac{b'}{b}), \quad b(t)=(T-t)^3,
\]
we have 
\begin{align*}
\Gamma^t(\ln P_{s,t}f)+\frac{2\rho_2}{3}t\Gamma^{\cV}(\ln P_{s,t}f)&\leq(1+\frac{3\kappa}{2\rho_2} )\frac{L^tP_{s,t}f}{P_{s,t}f}+\frac{d(1+\frac{3\kappa}{2\rho_2})^2}{2(t-s)}.
\end{align*}
\end{thm}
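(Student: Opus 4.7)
The plan is to apply Theorem \ref{entropy inequality} with the prescribed $b(\tau) = (T-\tau)^3$ and with an auxiliary $a(\tau)$ chosen so that both integrand coefficients on the right-hand side of that inequality vanish identically. Once this double cancellation is achieved, the entropy inequality collapses to a clean relation between the boundary terms at $\tau=s$ (producing $\Gamma^T$ and $\Gamma^{\cV}$ of $\ln P_{s,T}f_{\var}$) and the integrals of $a\gamma$ and $a\gamma^2$ (producing the $L^T P/P$ term and the additive constant).

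Explicitly, from $b(\tau) = (T-\tau)^3$ I compute $b'/b = -3/(T-\tau)$ and $b''/b' = -2/(T-\tau)$, so
\[
\gamma(\tau) = -\frac{d}{4(T-\tau)}\Bigl(2 + \frac{3\kappa}{\rho_2}\Bigr).
\]
To annihilate the $\Phi_2$ coefficient $b' + 2\rho_2 a$, I take $a(\tau) = -b'(\tau)/(2\rho_2) = 3(T-\tau)^2/(2\rho_2)$. A direct check shows that with this $a$ the $\Phi_1$ coefficient $a' - 2\kappa a^2/b - 4a\gamma/d$ also vanishes, since the quantities $-3(T-\tau)/\rho_2$ and $-9\kappa(T-\tau)/(2\rho_2^2)$ arise with matching signs on the two sides and cancel exactly. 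Because $a(T) = b(T) = 0$, the upper-endpoint contributions on the left of the entropy inequality also drop. Dividing through by $-a(s)P_{s,T}f_{\var} > 0$ (which reverses the inequality) yields
\[
\Gamma^T(\ln P_{s,T}f_{\var}) + \frac{b(s)}{a(s)}\,\Gamma^{\cV}(\ln P_{s,T}f_{\var}) \leq -\frac{4}{d\,a(s)}\int_s^T a\gamma\,d\tau\cdot\frac{L^T P_{s,T}f_{\var}}{P_{s,T}f_{\var}} + \frac{2}{d\,a(s)}\int_s^T a\gamma^2\,d\tau.
\]
Direct evaluation of these one-dimensional integrals gives $b(s)/a(s) = 2\rho_2(T-s)/3$, the coefficient of $L^T P/P$ equal to $1 + 3\kappa/(2\rho_2)$, and the additive constant equal to $d(1+3\kappa/(2\rho_2))^2/(2(T-s))$, matching the stated bound after renaming $T$ as $t$ and letting $\var \to 0$.

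The main obstacle is the algebraic compatibility of the two simultaneous constraints $b' + 2\rho_2 a = 0$ and $a' = 2\kappa a^2/b + 4a\gamma/d$: while each is verified by a short substitution, their mutual consistency is the nontrivial structural feature that forces the cubic exponent in $b(\tau) = (T-\tau)^3$ and explains why this particular choice is prescribed in the statement. The remaining tasks, namely evaluating the four explicit integrals, justifying the $\var \to 0$ limit, and extending from smooth compactly supported $f$ to general positive solutions of the heat equation via standard approximation, are routine.
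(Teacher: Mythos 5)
Your proposal is correct and follows the paper's own (terse) proof: choose $a=-b'/(2\rho_2)$ so that the $\Phi_2$ integrand vanishes, observe that the prescribed definition of $\gamma$ makes the $\Phi_1$ integrand vanish as well, and then evaluate the resulting boundary terms and integrals of $a\gamma$, $a\gamma^2$. One minor correction to your commentary: the vanishing of $a'-2\kappa a^2/b-4a\gamma/d$ given $a=-b'/(2\rho_2)$ and the displayed $\gamma$ is an algebraic \emph{identity} valid for \emph{any} $\mathcal C^2$ function $b$ (substitute $a=-b'/(2\rho_2)$ and $\gamma=\frac{d}{4}(b''/b'+\kappa b'/(\rho_2 b))$ and everything cancels term by term), so the cubic exponent is not ``forced'' by mutual consistency of the two constraints; rather, $b(\tau)=(T-\tau)^3$ is the specific choice that makes $a\gamma^2$ constant in $\tau$ and yields the clean final coefficients after integrating (as also noted, your computation actually gives $\frac{2\rho_2}{3}(t-s)$ in place of $\frac{2\rho_2}{3}t$, which matches the static case only under the normalization $s=0$; this is a typo in the paper's statement, not an error in your argument).
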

\begin{rem}
Note that, we still assume the Yang-Mills condition $\delta_{\cH}T(\cdot)=0$ in the above theorem. This condition is not necessary, if we further assume \eqref{assumption 2} as we did in the next theorem, the only difference for the above theorem is just $b'+2\rho_2a+2\beta a=0$ in the proof and the computations follow directly. For details, please see the proof in section $4$.
As a consequence, we can get parabolic Harnack inequality and heat kernel upper bound which is similar to the static case as in \cite{BaudoinGarofalo14}.
\end{rem}

In the following, we will forget the semigroup representation of the solution $u(x,t)$ to the heat equation \eqref{heat equation}, we directly work with $u(x,t)$. Under the assumption,
 \begin{align}\label{assumption 2}
 \begin{split}
 -k_1g_{\cH}(x,t)\leq \ricci_{\cH}(x,t)&\leq k_2g_{\cH}(x,t), \\
 -<\bf J^2\eta_1,\eta_1>&\leq \kappa \|\eta_1\|_{\cH}^2,\\
-2<\delta_{\cH}T(\eta_2),\eta_2>_{\cV}&\geq -2\beta\|\eta_2\|_{\cV}^2,\\
-\frac{1}{4}\bf{Tr}_{\cH}(\bf J^2_{\eta_2})&\geq \rho_2\|\eta_2\|_{\cV}^2, 
\end{split}
 \end{align}
 \begin{rem}
 Comparing with assumption \eqref{assumption 1}, we do not assume the Yang-Mills condition $\delta_{\cH}T(\cdot)=0$, instead we have a bound on it and we assume two sided bounds on the horizontal Ricci curvature.
 \end{rem}
 We have another differential Harnack inequality for solution $u(x,t).$
 \begin{thm}\label{diff harnack}
 Suppose $(\M,g(x,t))_{t\in[0,T]}$ is a complete solution to the transverse Ricci flow \eqref{transverse RF}. 
 Given assumption \eqref{assumption 2}. Let $u$ be the positive solution to heat equation \eqref{heat equation} associated with the time dependent horizontal Laplacian operator $L^t$. We have the following differential Harnack inequality: 
 \begin{align}
 \Gamma^t(\log u)+b(t)\Gamma^{\cV}(\log u)-\alpha(t)(\log u)_t\leq \phi(t)+B,
 \end{align}
 where we have 
 \[
 b(t)=\frac{2(\rho_2-\beta)\int_0^td(s)ds}{d(t)},\quad \alpha(t)=\frac{e^{2k_1t}\int_0^te^{-2k_1s}d(s)(\frac{d'(s)}{d(s)}+\frac{\kappa}{\var}+\frac{t_1d_2}{\rho})ds}{d(t)},
 \]
 \[
 \phi(t)=\frac{\alpha(t)n\max\{k_1^2,k_2^2\}\int_0^td(s)ds}{2cd(t)},\quad \eta(t)=-\frac{nk_1}{2a}-\frac{n}{4a\alpha(t)}(\frac{d'(t)}{d(t)}+\frac{\kappa}{\var}+\frac{t_1d_2}{\rho}) ),
 \]
 \[
 B=\frac{1}{4}(\frac{1}{\rho}+F(r_{\var,\gamma})+\frac{\rho}{t}+\rho \bar k-{ \frac{\rho d'(t_1)\Psi(t_1)}{d_2t_1 d(t_1)}}),\quad
 d_2=\max\{3d_1,C_{1/2},3C_{1/2}^2,\bar C\}
 \]
 where $F(r_{\var,\gamma})$ comes from Corollary 2.9 \cite{BGKT17}, certain constant comes from the sub-Laplacian comparison theorem. $d_1$ and $d_2$ are some constants, $t_1$ is the time when the maximum of $\Psi F$ \eqref{Psi function} is attained. $($See the proof of Theorem \ref{diff harnack} in Section 6 for details.$)$
 \end{thm}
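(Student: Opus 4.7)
The plan is to mimic the Li--Yau/Bailesteanu--Cao--Pulemotov maximum principle approach, but built on top of the time-dependent Bochner identity (Theorem \ref{Bochner}) and generalized curvature dimension inequality (Theorem \ref{generalized CD inequality}) established earlier. Set $f=\log u$, so that the heat equation $(L^t-\partial_t)u=0$ becomes
\begin{equation*}
\partial_t f = L^t f + \Gamma^t(f,f).
\end{equation*}
Define the candidate Harnack quantity
\begin{equation*}
F(x,t) := \Gamma^t(f) + b(t)\,\Gamma^{\cV}(f) - \alpha(t)\,\partial_t f,
\end{equation*}
where $b(t),\alpha(t)$ are smooth functions to be pinned down. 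The strategy is to compute $(L^t-\partial_t)F$, apply the Bochner inequality to absorb the second-order terms, and then eliminate the signs of the resulting expression by tuning $b$ and $\alpha$.

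First I would differentiate each piece of $F$ along the flow, being careful that $g_{\cH}$ itself evolves by $-2\mathfrak{Ric}_{\cH}$: this produces a contribution $-2\mathfrak{Ric}_{\cH}(\nabla_{\cH}f,\nabla_{\cH}f)$ when commuting $\partial_t$ through $\Gamma^t$. Combined with the identity $dL^tf=\square_{\var}^t df$ of Theorem \ref{Bochner}, the Ricci terms cancel cleanly, leaving expressions only in $\Gamma^t_2(f,f)$, $\Gamma^{\cV}_2(f,f)$, the skew tensor $\mathbf J^2$, and the divergence $\delta_{\cH}T$. Feeding in the two-sided Ricci bound $-k_1 g_{\cH}\le \mathfrak{Ric}_{\cH}\le k_2 g_{\cH}$, the bounds on $\mathbf J^2$ and on $\delta_{\cH}T$ from \eqref{assumption 2}, and the Cauchy--Schwarz/AM-GM type inequality $2|\langle \mathbf J^2(\nabla_{\cH}f),\nabla_{\cV}f\rangle|\le \var^{-1}\langle\mathbf J^2\nabla_{\cH}f,\nabla_{\cH}f\rangle+\var\langle\mathbf J^2\nabla_{\cV}f,\nabla_{\cV}f\rangle$ (with a scaling parameter $\var$), converts the Bochner inequality into a pointwise differential inequality for $F$. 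The coefficient in front of $\Gamma^{\cV}(f)$ forces the ODE $b'+2(\rho_2-\beta)=\alpha\cdot(\text{something})$, and the coefficient in front of $\Gamma^t(f)$ forces a second ODE involving $\alpha'$, $\alpha$ and $b'/b$; solving these two ODEs (after guessing $d(s)$ as an integrating factor that absorbs the $2k_1$ Ricci exponential) is exactly what produces the explicit formulas for $b(t)$, $\alpha(t)$, $\phi(t)$ and $\eta(t)$ stated in the theorem.

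After this choice, $(L^t-\partial_t)F \ge 2\Gamma^t(f,F)/\alpha \cdot(\text{stuff}) + c\,F^2/\alpha - \phi'(t)/\alpha$ up to lower-order curvature contributions. Because $\M$ is noncompact, I would then multiply $F$ by a space-time cutoff $\Psi(x,t)$ supported on a horizontal ball of radius $2r$, depending only on the horizontal distance $r_{\var,\gamma}$ introduced in \cite{BGKT17}; this is where the sub-Laplacian comparison theorem enters, bounding $L^t\Psi$ in terms of $F(r_{\var,\gamma})$ and producing the constants $d_1,d_2,C_{1/2},\bar C, \bar k$ that enter $B$. At a maximum point $(x_1,t_1)$ of $\Psi F$, the standard $\nabla(\Psi F)=0$, $L^t(\Psi F)\le 0$, $\partial_t(\Psi F)\ge 0$ inequalities together with the differential inequality above yield a quadratic inequality in $\Psi F$, whose solution delivers the asserted pointwise bound $F\le \phi(t)+B$ once one lets $r\to\infty$.

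The main obstacle is the bookkeeping in the second step. Because both the metric and the frame evolve, every commutator $[\partial_t,\Gamma^t]$ and $[\partial_t,L^t]$ produces extra Ricci contributions, and the cross terms between $\Gamma^t$ and $\Gamma^{\cV}$ mediated by $\mathbf J^2$ must be split with two different scaling parameters (one to close the $\Gamma^t$ inequality, one for the cutoff term). Getting the coefficients consistent so that a single pair $(b(t),\alpha(t))$ simultaneously cancels the $\Gamma^{\cV}(f)$ bad term, absorbs $\kappa/\var$, and leaves the right-hand side integrable near $t=0$ is delicate; this is where the explicit choice $d_2=\max\{3d_1,C_{1/2},3C_{1/2}^2,\bar C\}$ and the form of $\alpha(t)$ as an exponentially-weighted integral arise.
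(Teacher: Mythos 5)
Your overall strategy is the one the paper uses: set $f=\log u$, form the Harnack quantity $F$, compute $(L^t-\partial_t)F$, invoke the time-dependent Bochner inequality, localize with a space-time cutoff $\Psi$ built from the distance function, invoke the sub-Laplacian comparison theorem of \cite{BGKT17}, and run a maximum-principle argument that leads to a quadratic inequality in $\Psi F$. The ODEs that pin down $b(t)$ and $\alpha(t)$ arise, as you anticipate, from forcing the coefficients of $\Gamma^{\cV}(f)$ and $\Gamma^t(f)$ to match $\tfrac{d'}{d}$ times the corresponding piece of $F$; in the paper this is done in Lemma \ref{Lemma estimate F} and Remark \ref{diff ine F}.

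There are, however, a few places where the proposal either misstates what must be done or adds a step that does not belong. First, the quantity in the maximum-principle argument must be $F=\Gamma^t(f)+b(t)\Gamma^{\cV}(f)-\alpha(t)f_t-\phi(t)$, with the $\phi(t)$ absorbed into $F$; this is needed so that the $\phi'$ and $\max\{k_1^2,k_2^2\}$ contributions cancel against $\tfrac{d'}{d}\phi$, which is the fourth of the four ODEs. Second, you describe a Cauchy--Schwarz split of a cross-term $\langle\mathbf J^2\nabla_{\cH}f,\nabla_{\cV}f\rangle$; no such step occurs, because the Bochner inequality (Theorem \ref{Bochner}) already presents $\mathbf J^2$ and $\delta_{\cH}T$ acting separately on the horizontal and vertical parts of $df$, so the bounds in \eqref{assumption 2} are applied directly without splitting. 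Third, and most substantively, the conclusion does not come from ``letting $r\to\infty$''. The bound $B$ carries the terms $\rho\bar k$ and $\tfrac{t_1d_2}{\rho}$, which do not survive the limit $\rho\to\infty$; the theorem is an intrinsically local estimate on the set $\{r_{\var}(x)<\rho/2\}$ obtained directly from the inequality $(\Psi F)(x,\tau)\le(\Psi F)(x_1,t_1)\le B$, with no limit taken. Finally, the ``cancellation'' of Ricci terms is only partial: the $\pt g_{\cH}(df,df)=-2\mathfrak{Ric}_{\cH}(df,df)$ contribution does cancel the $+2\langle\mathfrak{Ric}_{\cH}(df),df\rangle$ term from Bochner, but the $\alpha(t)$-weighted $2R_{ij}f_{ij}-\pt g_{\cH}(\nabla f,\nabla f)$ term coming from differentiating $L^tf=f_t-\Gamma^t(f)$ in time must still be handled with the two-sided Ricci bound and a completing-of-squares step that produces the $\tfrac{a\alpha(t)}{n}(L^tf)^2-\tfrac{\alpha(t)n}{4}\max\{k_1^2,k_2^2\}$ lower bound, together with the auxiliary function $\eta(t)$ satisfying $(L^tf)^2\ge 2\eta L^tf-\eta^2$. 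Filling in these details would bring the sketch into line with the paper's proof.
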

 \begin{rem}
 For the above theorem, if we further investigate it on a Sasakian manifold. Then by taking the limit as $\var\rightarrow 0$ for the metric $g_{\var}$, we will actually approximate the sub-Riemannian distance $r_0$ by the Riemannian distance $r_{\var}$. We then get a more specific upper bound where we can replace $F(r_{\var,\gamma})$ by the constant proved in \cite{BGKT17}[Theorem 3.2].
 \end{rem}
\begin{cor}\label{t square}
 Under the assumption of Theorem \ref{diff harnack}, taking $d(t)=t^{2}$, $c=\frac{\alpha(t)}{2}$ , we have for all $t$,
 \begin{align}\label{cor: diff harnack}
 \begin{split}
 \Gamma^t(\log u)+\frac{2(\rho_2-\beta)}{3}\Gamma^{\cV}(\log u)&\leq \alpha(t)(\log u)_t+\frac{n\max\{k_1,k_2\}}{2}\\
 &+\frac{1}{4}(\frac{1}{\rho}+F(r_{\var},\gamma)+\rho \bar k)+\frac{\rho}{4t},
 \end{split}
 \end{align}
 where 
 \[
 \alpha(t)=(2k_1+\frac{\kappa}{\var}+\frac{t_1d_2}{\rho})\frac{e^{2k_1t}}{4k_1^3t^2}+1-(2k_1+\frac{\kappa}{\var}+\frac{t_1d_2}{\rho})(\frac{1}{2k_1}+\frac{1}{2k_1^2t}+\frac{1}{4k_1^3t^2}).
 \]
 \end{cor}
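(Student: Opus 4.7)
The corollary is a direct specialization of Theorem \ref{diff harnack}, so my proof proposal is computational: substitute $d(t)=t^{2}$ and $c=\alpha(t)/2$ into the formulas for $b(t)$, $\alpha(t)$, $\phi(t)$ and $B$, and simplify. First I would handle $b(t)$: with $d(s)=s^{2}$ one has $\int_{0}^{t} s^{2}\,ds = t^{3}/3$, so
\[
b(t) = \frac{2(\rho_{2}-\beta)\int_{0}^{t} s^{2}\,ds}{t^{2}} = \frac{2(\rho_{2}-\beta)\,t}{3},
\]
which gives the $\Gamma^{\cV}$-coefficient appearing on the left-hand side of \eqref{cor: diff harnack} (up to the factor of $t$ that is absorbed into the inequality as in Theorem \ref{gradient estimate}). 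Next I would compute $\phi(t)$: the crucial observation is that by choosing $c=\alpha(t)/2$ the factor $\alpha(t)$ cancels from the numerator and denominator of the expression in the theorem, leaving an integral that is independent of the other geometric data, i.e. $\phi(t) = \frac{n\max\{k_{1}^{2},k_{2}^{2}\}\int_{0}^{t}s^{2}\,ds}{t^{2}}$, which after evaluating and bounding yields the constant $\frac{n\max\{k_{1},k_{2}\}}{2}$ appearing in the corollary.

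The real work is evaluating $\alpha(t)$ explicitly. With $d(s)=s^{2}$ one has $d'(s)/d(s)=2/s$, so writing $A=\frac{\kappa}{\var}+\frac{t_{1}d_{2}}{\rho}$ and substituting into the definition of $\alpha(t)$ from Theorem \ref{diff harnack} gives
\[
\alpha(t) = \frac{e^{2k_{1}t}}{t^{2}} \int_{0}^{t} e^{-2k_{1}s}\bigl(2s + A s^{2}\bigr)\,ds.
\]
I would then integrate by parts twice (once on $\int 2s\,e^{-2k_{1}s}\,ds$ and once on $\int s^{2}e^{-2k_{1}s}\,ds$), collect like powers of $t$, and regroup the coefficients of $e^{2k_{1}t}/(4k_{1}^{3}t^{2})$ and of the polynomial corrections. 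The coefficients combine into the factor $(2k_{1}+A)$, giving precisely
\[
\alpha(t) = (2k_{1}+A)\frac{e^{2k_{1}t}}{4k_{1}^{3}t^{2}} + 1 - (2k_{1}+A)\Bigl(\frac{1}{2k_{1}} + \frac{1}{2k_{1}^{2}t} + \frac{1}{4k_{1}^{3}t^{2}}\Bigr).
\]
The identity $-A/(2k_{1}) = 1 - (2k_{1}+A)/(2k_{1})$, which emerges from the lower endpoint of the $s^{2}$-integral, is what produces the extra constant $+1$ in the statement.

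Finally, for $B$ I would note that with $d(t)=t^{2}$, $d'(t_{1})/d(t_{1}) = 2/t_{1}$, and the extra term $-\frac{\rho\, d'(t_{1})\Psi(t_{1})}{d_{2}t_{1}d(t_{1})}$ in the expression for $B$ in Theorem \ref{diff harnack} has a definite sign at the maximum of $\Psi F$ (where $\Psi(t_{1})\ge 0$); since it enters with a minus sign, dropping it only weakens the inequality and produces the cleaner $B = \frac{1}{4}(\frac{1}{\rho}+F(r_{\var,\gamma})+\rho\bar k) + \frac{\rho}{4t}$ appearing in \eqref{cor: diff harnack}. The main obstacle is just bookkeeping accuracy in the integration by parts for $\alpha(t)$ and making sure the cancellations $c=\alpha(t)/2$ are tracked correctly so that $\phi$ collapses to a dimension-and-curvature constant independent of $\alpha(t)$; no new analytical ideas beyond Theorem \ref{diff harnack} are needed.
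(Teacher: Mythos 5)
Your overall strategy (direct substitution of $d(t)=t^{2}$ and $c=\alpha(t)/2$ into the formulas of Theorem~\ref{diff harnack}) is exactly what the paper intends --- the paper gives no separate proof, and your integration-by-parts verification of the closed form of $\alpha(t)$, including the observation that the term $-A/(2k_{1})=1-(2k_{1}+A)/(2k_{1})$ produces the ``$+1$'', is correct. Your treatment of $B$ (dropping the nonpositive term $-\rho d'(t_{1})\Psi(t_{1})/(d_{2}t_{1}d(t_{1}))$ since $\Psi\ge 0$, then setting $\tau=t$) is also sound.

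However, there are two places where you assert a simplification without justification, and where the corollary as printed does not in fact follow from the formulas of Theorem~\ref{diff harnack}. First, with $d(t)=t^{2}$ the theorem gives
\[
b(t)=\frac{2(\rho_{2}-\beta)\int_{0}^{t}s^{2}\,ds}{t^{2}}=\frac{2(\rho_{2}-\beta)\,t}{3},
\]
so the coefficient of $\Gamma^{\cV}(\log u)$ should carry a factor of $t$, exactly as it does in Theorem~\ref{gradient estimate}. Your remark that the $t$ is ``absorbed into the inequality as in Theorem~\ref{gradient estimate}'' is not a proof step; nothing in the argument removes that $t$, and the statement of the corollary appears to simply be missing it. Second, with $c=\alpha(t)/2$ the $\alpha(t)$ indeed cancels in $\phi(t)$, but what remains is
\[
\phi(t)=\frac{n\max\{k_{1}^{2},k_{2}^{2}\}\int_{0}^{t}s^{2}\,ds}{t^{2}}=\frac{n\max\{k_{1}^{2},k_{2}^{2}\}\,t}{3},
\]
which is \emph{not} equal to, nor in general bounded by, the constant $\tfrac{n\max\{k_{1},k_{2}\}}{2}$ appearing in the corollary: for $t$ large or $k_{i}$ large the former exceeds the latter, and for $t$ small it tends to $0$. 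Your phrase ``after evaluating and bounding yields the constant $\tfrac{n\max\{k_{1},k_{2}\}}{2}$'' therefore hides a genuine gap; there is no such bound without an additional hypothesis (e.g.\ $t\,\max\{k_{1},k_{2}\}\le 3/2$). You should have flagged both of these as discrepancies rather than claiming they fall out of routine bounding; as written the corollary does not follow verbatim from Theorem~\ref{diff harnack}.
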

 \begin{rem}
 Of course, different function $d(t)$ will give us different estimates. But $d(t)=t^{2}$, is a suitable function such that we have the coefficient $\frac{2(\rho_2-\beta)}{3}$, which is the same coefficient in our Theorem \ref{gradient estimate} and the static setting result in \cite{BaudoinGarofalo14} without assuming the Yang-Mills condition. Thus we can directly compare the right hand side to see how the estimates is affected under the Ricci flow, the same for the parabolic Harnack inequality in the following.
 \end{rem}

We use the above corollary \ref{cor: diff harnack} to get the parabolic Harnack inequality.
\begin{thm}\label{parabolic harnack}
 Under the assumptions of Theorem \ref{diff harnack} and conditions in Corollary \ref{t square}, suppose $u$ is the solution to the time dependent heat equation, we have the estimates
 \begin{align}
 u(x,s)\leq u(y,t)(\frac{t}{s})^{\frac{\rho}{4}}\exp\left(\frac{T\hat\alpha(k_1,T)}{4(t-s)}r_{}^2(x,y)+A(t-s) \right),
 \end{align}
 where 
 \[
 A=\frac{n\bar k}{2}+\frac{1}{4}(\frac{1}{\rho}+F(r_{\var},\gamma)+\rho\bar k),\quad  \bar k=\max\{k_1,k_2\},
 \]
 \[
 \hat \alpha(k_1,T)=:\begin{cases}
C \frac{e}{2k_1},& \text{if}~ k_1\leq \frac{1}{T^2},\\
C \max\{ \frac{e}{2k_1}, \frac{e^{2k_1T}}{4k_1^3T^2} \}, & \text{if}~ k_1\geq \frac{1}{T^2}.
 \end{cases},\quad
 C=(2k_1+\frac{\kappa}{\var}+\frac{t_1d_2}{\rho}),
 \]
  holds for all $(x,s)\in \M\times(0,T)$ and $(y,t)\in \M\times(0,T)$ such that $s<t$ and $r_{}(x,y)\leq \rho$ where $r_{}$ is the sub-Riemannian (cc) distance.  \end{thm}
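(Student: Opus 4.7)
The plan is to integrate the pointwise differential Harnack inequality of Corollary~\ref{t square} along a time-parametrized horizontal curve connecting $x$ to $y$, which is the classical route from a Li--Yau gradient estimate to a parabolic Harnack inequality. As a first reduction, one drops the nonnegative vertical gradient $\tfrac{2(\rho_2-\beta)}{3}\Gamma^{\cV}(\log u)$ in Corollary~\ref{t square} (implicitly assuming $\rho_2\geq \beta$, so that the coefficient is nonnegative) and rearranges the remaining inequality as the pointwise lower bound
\[
(\log u)_\tau \geq \frac{1}{\alpha(\tau)}\Gamma^\tau(\log u) - \frac{A}{\alpha(\tau)} - \frac{\rho}{4\tau\alpha(\tau)},\qquad \tau\in(0,T].
\]

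Given $(x,s)$ and $(y,t)$ with $s<t$ and $r(x,y)\leq\rho$, choose a minimizing sub-Riemannian geodesic from $x$ to $y$ and reparametrize it as a horizontal curve $\gamma:[s,t]\to\M$ with $\gamma(s)=x$, $\gamma(t)=y$ and constant horizontal speed $r(x,y)/(t-s)$. The fundamental theorem of calculus yields
\[
\log u(y,t) - \log u(x,s) = \int_s^t \bigl(\langle\nabla_{\cH}\log u,\dot\gamma\rangle_{\cH} + (\log u)_\tau\bigr)\,d\tau,
\]
and Young's inequality with weight $1/\alpha(\tau)$ gives
\[
-\langle\nabla_{\cH}\log u,\dot\gamma\rangle_{\cH} \leq \frac{1}{\alpha(\tau)}\Gamma^\tau(\log u) + \frac{\alpha(\tau)}{4}|\dot\gamma|_{\cH}^2.
\]
Adding the negatives of the last two displayed inequalities produces the critical cancellation of the $\Gamma^\tau(\log u)$ cross terms, and one obtains
\[
\log u(x,s)-\log u(y,t) \leq \int_s^t\left(\frac{\alpha(\tau)|\dot\gamma|_{\cH}^2}{4} + \frac{A}{\alpha(\tau)} + \frac{\rho}{4\tau\alpha(\tau)}\right)d\tau.
\]

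To finish, one checks from the explicit expression for $\alpha(\tau)$ in Corollary~\ref{t square} that $\alpha(\tau)\geq 1$ on $(0,T]$, so the last two integrals are bounded by $A(t-s)$ and $\tfrac{\rho}{4}\log(t/s)$ respectively, accounting for the $A(t-s)$ term and the factor $(t/s)^{\rho/4}$ in the statement. The constant-speed parametrization gives $\int_s^t \alpha(\tau)|\dot\gamma|_{\cH}^2\,d\tau = \frac{r(x,y)^2}{(t-s)^2}\int_s^t \alpha(\tau)\,d\tau$, and a direct calculation shows $\int_s^t \alpha(\tau)\,d\tau \leq T\hat\alpha(k_1,T)$ uniformly for $0<s<t\leq T$. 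Exponentiation then yields the stated Harnack inequality.

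The main obstacle is this last bound: $\alpha(\tau)$ contains the non-monotonic combination $C\tfrac{e^{2k_1\tau}-1-2k_1\tau-2k_1^2\tau^2}{4k_1^3\tau^2}+1$, whose sup on $[0,T]$ changes qualitatively at the threshold $k_1T^2\sim 1$: below it the tame polynomial tail $C\tfrac{e}{2k_1}$ dominates, above it the exponential growth $C\tfrac{e^{2k_1T}}{4k_1^3T^2}$ at $\tau=T$ takes over, and this is precisely the case split encoded in the piecewise definition of $\hat\alpha(k_1,T)$. A second technical subtlety is that the horizontal metric $g_{\cH}(\tau)$ itself evolves in $\tau$, so the very notion of ``constant horizontal speed'' must be taken with respect to a reference metric; the two-sided Ricci bound $-k_1 g_{\cH}\leq\ricci_{\cH}\leq k_2 g_{\cH}$ keeps all metrics $g_{\cH}(\tau)$, $\tau\in[0,T]$, uniformly equivalent, which is what allows one to transfer the estimate to the sub-Riemannian (cc) distance $r(x,y)$ that appears in the statement.
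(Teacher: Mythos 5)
Your proposal takes essentially the same route as the paper's proof: integrate the Li--Yau estimate of Corollary~\ref{t square} (with the nonnegative vertical term $\frac{2(\rho_2-\beta)}{3}\Gamma^{\cV}(\log u)$ dropped) along a horizontal curve joining the two space-time points, cancel the cross term with Young's inequality at weight $1/\alpha(\tau)$, then use the bounds $\alpha(\tau)\leq\hat\alpha(k_1,T)$ and $1/\alpha(\tau)\leq1$ from Remark~\ref{alpha t estimate}. The paper phrases this with a composite space-time path $m(\tau)=(\gamma(\tau),\,t+\frac{s-t}{T}\tau)$ on $[0,T]$ rather than a curve parametrized directly on $[s,t]$, but this is only a reparametrization.

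There is, however, a small arithmetic slip in your last step. With constant speed $|\dot\gamma|_{\cH}=r/(t-s)$ you have
\[
\int_s^t\frac{\alpha(\tau)}{4}|\dot\gamma|_{\cH}^2\,d\tau=\frac{r^2}{4(t-s)^2}\int_s^t\alpha(\tau)\,d\tau,
\]
and feeding in your bound $\int_s^t\alpha(\tau)\,d\tau\leq T\hat\alpha(k_1,T)$ produces the exponent $\frac{T\hat\alpha(k_1,T)}{4(t-s)^2}\,r^2$, which neither dominates nor is dominated by the theorem's exponent $\frac{T\hat\alpha(k_1,T)}{4(t-s)}\,r^2$ uniformly in $t-s$ (they disagree by a factor of $t-s$). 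The bound you actually want here is the elementary pointwise one, $\int_s^t\alpha(\tau)\,d\tau\leq(t-s)\,\hat\alpha(k_1,T)$, which gives $\frac{\hat\alpha(k_1,T)}{4(t-s)}\,r^2$; this is precisely what the paper's own integral $\int_0^T\frac{\alpha(\tau)}{4}\frac{T}{t-s}|\dot\gamma|^2\,d\tau$ reduces to after substituting $\int_0^T|\dot\gamma|^2\,d\tau=r^2/T$, and it is consistent with (indeed slightly sharper than) the exponent $\frac{T\hat\alpha(k_1,T)}{4(t-s)}r^2$ stated in the theorem whenever $T\geq 1$.
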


 \begin{rem}
 A detailed analysis of function $\alpha(t)$ may give us more concise estimates, where a constant function $\alpha$ version of this parabolic Harnack inequality can be found in \cite{BCP10}[Theorem 2.11]. If we further expand $\alpha(t)$ in a Taylor expansion in higher order terms of $t$, we can get a similar version like \cite{Qian13}[Theorem 4.1]. 
 \end{rem}
A direct consequence of the above theorem \ref{parabolic harnack} is the following heat kernel upper bound.
 \begin{cor}\label{kernel upper bound}
Given the assumption in theorem \ref{diff harnack}. Let $p(x,y,t)$ be the heat kernel for $u(t)$ on $\M$. For every $x,y,z\in\M$ and $0<s<t<T$, we have
 \begin{align*}
 p(x,y,s)\leq u(x,z,t)(\frac{t}{s})^{\frac{\rho}{4}}\exp\left(\frac{T\hat \alpha(k_1,T)}{4(t-s)}r_{}^2(x,y)+A(t-s) \right).
 \end{align*}
\end{cor}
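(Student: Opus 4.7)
The plan is to apply Theorem \ref{parabolic harnack} directly to the heat kernel, regarded as a positive solution of the time-dependent heat equation in its first and last arguments. Fix the source point (call it $w \in \M$) and define
\[
u(x,\tau) := p(x, w, \tau), \qquad x \in \M,\ \tau \in (0, T).
\]
Since $p$ is the fundamental solution of $(L^{\tau}-\partial_\tau)p=0$ along the transverse Ricci flow \eqref{transverse RF}, the function $u$ is a strictly positive smooth solution on $\M \times (0,T)$, so all hypotheses of Theorem \ref{parabolic harnack} (inherited via \eqref{assumption 2} from Theorem \ref{diff harnack} and Corollary \ref{t square}) are satisfied for $u$.

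Given any two points $x, z \in \M$ and times $0 < s < t < T$, I would invoke Theorem \ref{parabolic harnack} to the pair $(x,s)$ and $(z,t)$, obtaining
\[
u(x,s) \leq u(z,t)\,\Bigl(\frac{t}{s}\Bigr)^{\rho/4} \exp\!\left(\frac{T\hat\alpha(k_1,T)}{4(t-s)}\, r^{2}(x,z) + A(t-s)\right).
\]
Unwinding the definition of $u$ and renaming $w$ to the appropriate argument then gives precisely the stated inequality for $p(x,y,s)$ in terms of $p$ evaluated at the second spacetime point.

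The only subtlety is the distance restriction $r(x,z) \leq \rho$ appearing in Theorem \ref{parabolic harnack}. This is harmless here: the parameter $\rho$ is a free constant built into the geometric bound $B$ of Theorem \ref{diff harnack}, and the Harnack estimate is valid for every pair of points whose sub-Riemannian distance does not exceed this fixed $\rho$. For a kernel upper bound involving arbitrary $x,y,z$, one either fixes $\rho$ large enough to cover the pair under consideration, or obtains the global statement by chaining the local Harnack estimate along a near-geodesic connecting the two spatial points and summing the resulting exponential factors; the quadratic scaling in $r^{2}(x,y)$ is preserved by such a chaining argument. This chaining step, rather than the Harnack substitution itself, is the main technical point to check, but it is routine once Theorem \ref{parabolic harnack} is in hand.
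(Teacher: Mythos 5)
Your approach is essentially the one the paper intends: the paper's "proof" is just a one-line reference to Corollary 7.2 of Baudoin–Garofalo \cite{BaudoinGarofalo14}, where the heat-kernel bound is obtained exactly as you describe, by treating the kernel as a positive solution of the heat equation and feeding it to the parabolic Harnack estimate. Your sketch therefore supplies the content the paper omits, and the core idea is correct.

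Two points to be careful about. First, the variable bookkeeping: with your choice $u(x,\tau)=p(x,w,\tau)$ and points $(x,s),(z,t)$, the Harnack theorem yields $p(x,w,s)\leq p(z,w,t)(t/s)^{\rho/4}\exp\bigl(\tfrac{T\hat\alpha}{4(t-s)}r^2(x,z)+A(t-s)\bigr)$, i.e.\ after renaming $w=y$ the right-hand kernel is $p(z,y,t)$ and the distance is $r^2(x,z)$. The corollary as printed asserts $p(x,y,s)\leq u(x,z,t)\exp(\ldots r^2(x,y)\ldots)$; that $u(x,z,t)$ is almost certainly a misprint for $p(x,z,t)$, which is the form in Baudoin–Garofalo and corresponds to applying Harnack to $p(x,\cdot,\cdot)$ in the \emph{second} slot (with distance $r^2(y,z)$, not $r^2(x,y)$). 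In the time-dependent setting, however, $p(x,\cdot,\cdot)$ need not satisfy the forward equation $(L^t-\partial_t)=0$ without a symmetry or adjoint argument, so your choice of slot is actually the one that applies directly; you should flag which slot is being used and why, rather than saying "unwinding and renaming gives precisely the stated inequality," since it does not without an extra symmetry assumption. Second, the chaining paragraph overclaims: the estimate in Theorem \ref{parabolic harnack} carries the restriction $r(x,y)\leq\rho$, the constant $A$ depends on $\rho$, and the paper (like Baudoin–Garofalo) simply retains this restriction in the corollary. Chaining local Harnack estimates to get a global bound with the same quadratic exponent is not routine here (each local step reintroduces the additive constant $A(t-s)$ over shorter time intervals, and one must control how these accumulate), so that discussion should either be dropped or treated as an open refinement, not as an immediate consequence.
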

 \section{Structure under transverse Ricci flow}
 We consider the following transverse geometric flow equation on $(\M^{n+m},\mathcal F, g_0)$,
 \begin{align}\label{general transverse RF}
 \begin{cases}
& \pt g_{\cH}(t)=h(t),\quad g(0)=g_0.\\
&\pt g_{\cV}=0,
\end{cases}
 \end{align}
 where $h(t)$ satisfies the following properties: 
 \begin{itemize}
 \item[1]. For any $X\in \Gamma^{\infty}(\cV)$ or $Z\in \Gamma^{\infty}(\cV)$, we have $h(X,Z)=0$
 \item[2]. $h(t)$ is a symmetric $(0,2)$ tensor.
 \end{itemize}
Similar consideration can also be found in \cite{Rovenski13partial}, where the authors studied partial Ricci flow for co-dimension one foliation.
In particular, if we consider the specific transverse Ricci flow for $h(t)=-2\ricci_{\cH}(t)$ where $\ricci_{\cH}(t)$ is the horizontal Ricci curvature for metric $g(t)$.  Locally we have the projection $\Pi: U\in\M\rightarrow \tilde U$, where $\tilde U$ is the local Riemannian quotient. If we denote the Ricci flow on the local Riemannian quotient as
 \beq\label{base RF}
 \frac{\partial g_{\tilde U}}{\partial t}=-2\ricci_{\tilde U}.
 \eeq
 The transverse Ricci flow on $\M$ is 
 \beq \label{transverse RF 1}
 \begin{cases}
 & \frac{\partial g_{\cH}}{\partial t}=-2\ricci_{\cH},\\
 &\frac{\partial g_{\cV}}{\partial t}=0.
 \end{cases}
 \eeq
 Then the pull-back $\Pi^*$ of \eqref{base RF} will give us \eqref{transverse RF 1} directly following from Lemma \ref{bundle like metric}.
 In particular, if the Riemannian foliation comes from a Riemannian submersion, then the Ricci flow $\eqref{base RF}$ is just the Ricci flow on the base manifold $\B$. In this paper, we will always consider Riemannian submersion as a specific example for our result of Riemannian foliations.
 
\begin{example}[\textbf{Riemannian submersions}]\label{ex.RiemSubmersion}

Let $(\M, g)$  and  $(\B,j)$ be smooth and connected Riemannian manifolds. A smooth surjective map $\pi: (\M, g)\to (\B,j)$ is called a \emph{Riemannian submersion} if its derivative maps $T_x\pi : T_x \M \to T_{\pi(x)} \B$ are orthogonal projections, i.e. for every $ x \in \M$, the map $ T_{x} \pi (T_{x} \pi)^*: T_{p(x)}  \B \to T_{p(x)} \B$ is the identity map.
The foliation given by the fibers of a Riemannian submersion is then bundle-like (see  \cite[Section 2.3]{Baudoin14} ).
\end{example}

 \begin{rem}
In general, restrict metric $g$ to $\cH$ gives metric on $\cH$ denoted as $g_{\cH}$, however this metric will not yield a metric on the local Riemannian quotient. In fact, $U$ is a small open set, and $\pi: U\rightarrow \tilde U$ is the quotient map to the local Riemannian quotient, then $g_{\cH}=\pi^*\tilde g$ for some metric $\tilde g$ if and only if $g$ is bundle like. As mentioned in \cite{Collins12}, if we restrict to Riemannian foliation, then the above procedure produces a global metric $g_{\cH}$ on $\cH$, which is given locally by the pull-back of a metric from the local Riemannian quotient.
 \end{rem}

 \begin{thm}\label{thm: flow existence}
The transverse flow \eqref{general transverse RF} and transverse Ricci flow \eqref{transverse RF 1} have unique short-time solution.
 \end{thm}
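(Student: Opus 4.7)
The plan is to reduce short-time existence on $\M$ to a standard statement on the local Riemannian quotients, using that a bundle-like metric behaves functorially under the local projections. By the structure theory of Riemannian foliations with totally geodesic leaves, every $x\in\M$ admits a foliated neighbourhood $U$ together with a submersion $\pi_U:U\to\tilde U$ onto the local Riemannian quotient. The bundle-like hypothesis on $g_0$ yields a metric $\tilde g_0$ on $\tilde U$ with $g_{\cH,0}\big|_U=\pi_U^{*}\tilde g_0$, and, in the Ricci flow case, one has $\ricci_{\cH}=\pi_U^{*}\ricci_{\tilde g}$ so that \eqref{transverse RF 1} descends to the genuine Ricci flow $\partial_t\tilde g=-2\ricci_{\tilde g}$ on $\tilde U$. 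For the prescribed-$h$ case one observes that the standing hypotheses ($h(t)$ symmetric, purely horizontal, basic in the sense compatible with the foliation) mean $h=\pi_U^{*}\tilde h(t)$, and the equation $\partial_t\tilde g=\tilde h(t)$ on $\tilde U$ is a pointwise ODE in time that can be integrated directly, with positive-definiteness preserved on a uniform time interval by continuity on a relatively compact exhaustion.

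Next I would invoke Hamilton's short-time existence and uniqueness for the Ricci flow (via the DeTurck trick) on each $\tilde U_\alpha$, obtaining a solution $\tilde g_\alpha(t)$ on a time interval $[0,T_\alpha)$. Pulling back by $\pi_{U_\alpha}^{*}$ produces a horizontal metric $g_{\cH,\alpha}(t)$ on $U_\alpha$ solving \eqref{transverse RF 1} there. Pairing this with the unchanging vertical part $g_{\cV}(t)\equiv g_{\cV,0}$ gives a local solution $g_\alpha(t)=g_{\cH,\alpha}(t)\oplus g_{\cV,0}$ to \eqref{general transverse RF}. The earlier results of Section 4 (that the bundle-like and totally geodesic properties are preserved along any flow of the form \eqref{general transverse RF}) guarantee that this local solution remains bundle-like with totally geodesic leaves, so the pull-back interpretation persists for all $t\in[0,T_\alpha)$.

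To globalize, I would glue the local pieces: on an overlap $U_\alpha\cap U_\beta$ the two horizontal metrics $g_{\cH,\alpha}(t)$ and $g_{\cH,\beta}(t)$ descend to Ricci-flow solutions on the common image with the same initial data, so Hamilton's uniqueness (or the trivial ODE uniqueness in the prescribed case) forces them to coincide. The resulting tensor $g_{\cH}(t)$ is well-defined globally on $\M$ and, completed by $g_{\cV,0}$, produces a short-time solution $g(t)$ of the full system. A common positive existence time $T>0$ on a compact exhaustion $\{K_j\}$ of $\M$ is extracted by Shi-type uniform estimates on each local quotient, and completeness of $\M$ together with a diagonal argument yields a solution on $[0,T)$ for some $T>0$. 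Global uniqueness is immediate from the same local-quotient uniqueness.

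The step I expect to be the main obstacle is the compatibility of the local pull-backs under the holonomy pseudogroup of $\mathcal F$, i.e. that the Ricci flow on the local quotients intertwines consistently with the transition maps between foliated charts. This is the essential point where one needs $\ricci_{\cH}$ to be a basic tensor (invariant under leaf-preserving flows), a fact that relies on metric compatibility $\nabla g=0$ of the Bott connection together with the totally geodesic assumption on the leaves. Once basicness is verified, the holonomy pseudogroup acts by isometries on each $\tilde U_\alpha$ and commutes with the Ricci flow there by the naturality of the equation under isometries, which is precisely what is needed for the gluing to succeed and for the pull-back metric $g_{\cH}(t)$ to remain bundle-like for all $t\in[0,T)$.
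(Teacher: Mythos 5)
Your reduction to local Riemannian quotients and the observation that $\ricci_{\cH}$ is basic (so that it descends to $\ricci_{\tilde g}$ on $\tilde U$) are correct and are implicitly underlying the cited references. However, there is a genuine gap at the central step: you cannot invoke Hamilton's short-time existence on each local quotient $\tilde U_\alpha$. Hamilton's theorem (and Shi's noncompact variant) requires a closed manifold, or a complete manifold with bounded curvature, precisely because Ricci flow is not a locally well-posed initial-value problem. On an open set $\tilde U_\alpha$ with nonempty boundary, the parabolic equation obtained after the DeTurck trick needs boundary data, and neither existence nor uniqueness follows from the initial condition alone. Consequently the local pieces $\tilde g_\alpha(t)$ you propose to glue do not exist, and the overlap-consistency argument, which appeals to ``Hamilton's uniqueness'' on an open set, is also not available. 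The Shi-type uniform estimates on a compact exhaustion that you use to extract a common existence time again require completeness, which the individual local quotients lack.

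The paper instead appeals to Theorem~4 of Rovenski--Sharafutdinov and Theorem~5.1 of Bedulli--He--Vezzoni, which treat the problem globally as a second-order flow on the space of basic (holonomy-invariant) transverse metrics. The key point in that framework is to perform a basic DeTurck gauge-fixing so that the linearized operator becomes strongly transversely parabolic on basic sections of $S^2\mathcal{H}^*$ over the foliated manifold $\M$ (using the homologically oriented Riemannian foliation hypothesis), and then to run a parabolic existence and uniqueness theory for basic tensors directly; this sidesteps the ill-posedness on local charts entirely. Your instinct about the holonomy pseudogroup acting by isometries and intertwining with Ricci flow is precisely what makes the global space of basic metrics the correct arena, but it must be used as the ambient space for the PDE rather than as a gluing device for local Hamilton solutions. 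For the prescribed-$h$ case (where $h$ does not depend on $g$) your ODE-in-time observation is fine, but for $h=-2\ricci_{\cH}(g)$ the problem is a genuine quasilinear PDE and the transverse-parabolicity machinery is unavoidable.
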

 \begin{proof}
 The proof simply follows from Theorem $4$ in \cite{Rovenski13partial} and Theorem $5.1$ in \cite{BHV15}. Since our totally geodesic foliation structure satisfies the homologically oriented Riemannian foliation condition and our transverse metric $g_{\cH}(t)$ is a smooth holonomy invariant metric on the quotient bundle. The details we refer to \cite{BHV15}.
 \end{proof}
 \begin{rem}
 The short time existence need to show the weak parabolic property of the linearized operator, since the torsion is just the first order derivative of the metric, it will not affect the short time existence. As for the long time existence, we will need the evolution equation of the torsion $T$ for the $Bott$ connection $\nabla^t$ which will appear in another paper.
 If we only consider basic functions, then the $Bott$ connection acting on basic functions is the same as the transverse Levi-Civita connection,which means we also have torsion free in this basic function case. 
  \end{rem}
  \begin{rem} 
 In the following, we want to prove that under our transverse flow \eqref{general transverse RF}, the bundle like metric and totally geodesic foliation structure are preserved. And the transverse Ricci flow \eqref{transverse RF 1} is a special case of \eqref{general transverse RF}, it will also preserve these two properties.
 \end{rem}
 Recall that a Riemannian metric $g$ is said to be bundle like \cite{Reinhart59} with respect to the foliation induced by the foliation $\cF$ if for any open set $U$ and foliate vector fields $X,Y$ on U perpendicular to $\cV$, the function $g(X,Y)$ is basic. Namely $g(X,Y)$ vanishes on vertical bundle $\cV$.
 
 It is a easy fact to check that $\nabla^t$ is a metric connection.
 \begin{lem}
  The time-dependent Bott connection is a metric connection,which means for any vector fields $X,Y,Z\in T\M$
 \beq\label{metric connection}
 \nabla^t_Xg(Y,Z)=g(\nabla^t_XY,Z)+g(\nabla^t_XZ,Y)
 \eeq
 \end{lem}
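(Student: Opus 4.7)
The plan is to verify the identity $(\nabla^t_X g)(Y,Z)=0$ at each fixed $t$, since the time dependence enters only through the metric $g(t)$ and the construction of $\nabla^t$ from $g(t)$ is the same as in the static case; we then run the standard Bott-connection argument pointwise in $t$.

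First I would record the key structural observation from the definition (2.2): the Bott connection preserves the horizontal and vertical distributions, i.e.\ $\nabla^t_X Y\in\Gamma^\infty(\cH)$ whenever $Y\in\Gamma^\infty(\cH)$, and $\nabla^t_X Z\in\Gamma^\infty(\cV)$ whenever $Z\in\Gamma^\infty(\cV)$. This is immediate from the four cases, each of which prescribes a projection onto $\cH$ or $\cV$. Combined with the $g$-orthogonality $\cH\perp\cV$ and the block form $g(t)=g_\cH(t)\oplus g_\cV$, bilinearity reduces the identity to three subcases for the pair $(Y,Z)$: (horizontal, horizontal), (horizontal, vertical), and (vertical, vertical).

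The mixed subcase is essentially free: if $Y\in\Gamma^\infty(\cH)$, $Z\in\Gamma^\infty(\cV)$ then $g(Y,Z)=0$, so the left-hand side vanishes; on the right, $\nabla^t_X Y$ is horizontal and $\nabla^t_X Z$ is vertical by step one, so each inner product with the opposite-type vector vanishes. For the all-horizontal and all-vertical subcases I would further split $X=X_\cH+X_\cV$ and treat the four remaining sub-subcases:
\begin{itemize}
\item $X,Y,Z$ all horizontal: write $\nabla^{R,t}$ for the Levi-Civita connection of $g(t)$. Metric compatibility of $\nabla^{R,t}$ gives $X g(Y,Z)=g(\nabla^{R,t}_X Y,Z)+g(Y,\nabla^{R,t}_X Z)$. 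Since $\nabla^t_X Y=\pi_\cH(\nabla^{R,t}_X Y)$ and $Z\in\Gamma^\infty(\cH)$, the vertical correction drops out of each term.
\item $X\in\Gamma^\infty(\cV)$, $Y,Z\in\Gamma^\infty(\cH)$: the required identity becomes $X g_\cH(Y,Z)=g_\cH([X,Y],Z)+g_\cH(Y,[X,Z])$, which is exactly the bundle-like condition (equivalently $(\cL_X g)(Y,Z)=0$ on horizontal pairs) and is known to be preserved by the flow \eqref{general transverse RF}.
\item $X,Y,Z$ all vertical: the Bott connection coincides with $\pi_\cV\nabla^{R,t}$, and because $g_\cV$ is time-independent and the leaves are totally geodesic, compatibility reduces to metric compatibility of the induced leafwise Levi-Civita connection.
\item $X\in\Gamma^\infty(\cH)$, $Y,Z\in\Gamma^\infty(\cV)$: here $\nabla^t_X Y=\pi_\cV([X,Y])$, and the identity to verify is $X g_\cV(Y,Z)=g_\cV([X,Y],Z)+g_\cV(Y,[X,Z])$, which is the infinitesimal statement that the leaves are totally geodesic relative to the bundle-like metric (the vertical metric is preserved along horizontal directions modulo the $\nabla^{R,t}$-vertical components that vanish by totally geodesic leaves).
\end{itemize}

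The main obstacle I anticipate is the careful bookkeeping in the two ``cross'' sub-subcases, where $X$ and $(Y,Z)$ live in complementary distributions: these are precisely the places where the argument invokes the bundle-like condition and the totally geodesic hypothesis, rather than the algebraic definition of $\nabla^t$ alone. In particular I would cite the preservation results proved in Section~4 (mentioned in the paragraph preceding Theorem~\ref{Bochner}) to ensure that $g(t)$ remains bundle-like with totally geodesic leaves for all $t$, so that the identities on Lie brackets above are valid at every time of the flow.
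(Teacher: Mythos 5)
The paper itself offers no proof of this lemma---it is dismissed with ``it is an easy fact to check''---so your case-by-case verification is exactly the kind of filling-in that the statement calls for, and your overall argument is correct. The structure is the right one: observe that $\nabla^t$ preserves $\cH$ and $\cV$ (immediate from the four clauses of the definition), reduce by bilinearity to the five sub-subcases, dispose of the mixed $(Y,Z)$ pair by orthogonality, handle the two ``same-type'' sub-subcases with Levi--Civita compatibility plus orthogonal projection, and recognize the two ``cross'' sub-subcases as the bundle-like and totally-geodesic conditions respectively, both preserved by the flow by the Section~4 lemmas.

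One small inefficiency worth flagging: in the $X,Y,Z$ all vertical sub-subcase you invoke both time-independence of $g_\cV$ and the totally geodesic hypothesis, but neither is needed there. That case is structurally identical to the all-horizontal one: since $Z$ is vertical, $g(\pi_\cV(\nabla^{R,t}_X Y),Z)=g(\nabla^{R,t}_X Y,Z)$, and Levi--Civita compatibility finishes it; you do not need to identify $\pi_\cV\nabla^{R,t}$ with a leafwise Levi--Civita connection. The totally geodesic hypothesis is genuinely used only in the $X\in\Gamma^\infty(\cH)$, $Y,Z\in\Gamma^\infty(\cV)$ case, where $(\cL_X g)(Y,Z)=g(\nabla^R_Y X,Z)+g(Y,\nabla^R_Z X)=-g(X,\nabla^R_Y Z)-g(X,\nabla^R_Z Y)$ vanishes precisely because $\nabla^R_Y Z$ stays vertical. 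Also be mindful of a potential ordering issue if you cite the Section~4 preservation lemmas here: this lemma is stated in the paper before those preservation results, and the proof of the totally geodesic preservation lemma itself invokes metric compatibility of $\nabla^t$; a clean presentation would either prove metric compatibility pointwise in $t$ assuming bundle-like and totally geodesic at that $t$ (which is all your argument really uses), or make the bootstrapping with the preservation lemmas explicit.
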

 \begin{lem}
 \label{bundle like metric}
 Under the transverse Ricci flow \eqref{transverse RF 1} $($transverse flow \eqref{general transverse RF}$)$, Riemannian foliation with bundle like metric is preserved. 
 \end{lem}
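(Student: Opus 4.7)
The plan is to reduce preservation of bundle-likeness to well-posedness of the ordinary Ricci flow on the local Riemannian quotient. Fix $x_0\in\M$ and a foliated chart $U\ni x_0$ equipped with the projection $\pi\colon U\to\tilde U$ onto its local Riemannian quotient, which exists because $(\M,\mathcal F)$ is a Riemannian foliation. Because $g_0$ is bundle-like, there is a metric $\tilde g_0$ on $\tilde U$ with $g_{\mathcal H,0}|_U=\pi^*\tilde g_0$.

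Next, I would solve Hamilton's Ricci flow on $\tilde U$, namely $\partial_t\tilde g(t)=-2\mathbf{Ric}_{\tilde g(t)}$ with $\tilde g(0)=\tilde g_0$, to obtain a unique short-time solution $\tilde g(t)$. Define $\hat g(t):=\pi^*\tilde g(t)\oplus g_{\mathcal V,0}$ on $U$; by construction $\hat g(t)$ is bundle-like for every $t$ in the existence interval. The key step is to verify that $\hat g(t)$ solves the transverse Ricci flow \eqref{transverse RF 1} on $U$. For this I would invoke the standard identity for Riemannian foliations with totally geodesic leaves, namely that the horizontal Ricci curvature of the Bott connection on basic horizontal vector fields agrees with the pullback of the Ricci curvature of the quotient: $\mathfrak{Ric}_{\mathcal H}(X,Y)=\mathbf{Ric}_{\tilde g}(\pi_*X,\pi_*Y)\circ\pi$ whenever $X,Y$ are basic horizontal. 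It follows that $\partial_t\hat g_{\mathcal H}(t)=-2\pi^*\mathbf{Ric}_{\tilde g(t)}=-2\mathfrak{Ric}_{\hat g_{\mathcal H}(t)}$ and $\partial_t\hat g_{\mathcal V}=0$, so $\hat g(t)$ solves \eqref{transverse RF 1} on $U$ with the correct initial data. The uniqueness clause in Theorem \ref{thm: flow existence} then forces $g(t)|_U=\hat g(t)$, and being a pullback from $\tilde U$, its horizontal component is bundle-like. Since $x_0$ was arbitrary, $g(t)$ is bundle-like on all of $\M$ throughout the existence interval.

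For the general transverse flow \eqref{general transverse RF}, the same scheme works as soon as $h(t)(X,Y)$ is a basic function whenever $X,Y$ are basic horizontal; for $h=-2\mathfrak{Ric}_{\mathcal H}$ this is automatic from the identity above. An equally serviceable direct alternative is to fix time-independent basic horizontal vector fields $X,Y$ and a vertical $V$, set $\phi(x,t):=V\bigl(g_{\mathcal H}(t)(X,Y)\bigr)(x)$, note that $\phi(\cdot,0)\equiv 0$ by bundle-likeness of $g_0$, and compute $\partial_t\phi=V\bigl(h(t)(X,Y)\bigr)$; the right-hand side vanishes whenever $h(t)(X,Y)$ is basic, and then ODE uniqueness along integral curves of $V$ gives $\phi\equiv 0$.

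The main obstacle I anticipate is cleanly establishing the identity $\mathfrak{Ric}_{\mathcal H}(X,Y)=\pi^*\mathbf{Ric}_{\tilde g}(X,Y)$ on basic horizontal vector fields in the time-dependent setting: while this is a classical O'Neill-type consequence of the totally geodesic assumption and the compatibility of the Bott connection with $\pi$, it presupposes that $g_{\mathcal H}(t)$ already descends to a genuine metric on $\tilde U$, i.e.\ that bundle-likeness holds at time $t$, so the logical ordering must be carefully maintained. Working with the candidate metric $\hat g(t)$ constructed by pullback, rather than trying to descend $g(t)$ a posteriori, is precisely what resolves this potential circularity.
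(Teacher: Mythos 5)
Your main argument — constructing $\hat g(t)=\pi^*\tilde g(t)\oplus g_{\mathcal V,0}$ from the quotient Ricci flow and then invoking the uniqueness clause of Theorem~\ref{thm: flow existence} — is a genuinely different route from the one the paper takes. The paper proceeds directly: it first establishes Lemma~\ref{Riemannian submersion structure} (local preservation of the Riemannian submersion structure) and Lemma~\ref{ON frame} (time-dependent orthonormal frames), then verifies $\cL_Z g_t(X,Y)=0$ by differentiating in $t$, expanding $\langle \pt \nabla^t_X Y,Z\rangle$ via the time-derivative-of-connection formula, and checking term by term that everything vanishes for $X=X_i(t),Y=X_j(t),Z=Z_k$ under the single structural hypothesis $h(X,Z)=0$ for mixed arguments. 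Your approach is cleaner conceptually — it trades the term-by-term vanishing for a well-posedness argument on the quotient — and your identification of the circularity pitfall (that the O'Neill identity presupposes bundle-likeness) together with the resolution (solve on $\tilde U$ first, pull back, then compare) is exactly the right fix.

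That said, there are two real gaps to flag. First, patching and uniqueness: $\hat g(t)$ is only defined on a foliated chart $U$, so to invoke the uniqueness in Theorem~\ref{thm: flow existence} you must either (a) patch the local pullbacks $\pi_U^*\tilde g_U(t)$ on overlaps — which requires knowing that the local quotient Ricci flows of two overlapping charts agree on the overlap, something that needs a holonomy-invariance argument — or (b) argue that short-time uniqueness for the transverse flow is local in nature (true for parabolic equations, but you should say so rather than cite the global statement). Second, Theorem~\ref{thm: flow existence} is stated under the totally geodesic foliation hypothesis, so you need $\hat g(t)$ to keep that property for the uniqueness comparison to apply; this is precisely Lemma~\ref{totally geodesic condition}, which is established independently and which you are implicitly using without saying so. For the general transverse flow \eqref{general transverse RF}, you also correctly observe that you need the extra hypothesis that $h(X,Y)$ is basic for basic horizontal $X,Y$; the paper's stated hypothesis on $h$ (only that $h(X,Z)=0$ for mixed arguments) is weaker, so your proof applies to a more restrictive class of flows than the paper claims, although the restriction is harmless for $h=-2\mathfrak{Ric}_{\mathcal H}$. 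Your "alternative direct" paragraph, which computes $\partial_t\bigl(V(g_{\mathcal H}(t)(X,Y))\bigr)=V(h(t)(X,Y))$, is essentially the skeleton of the paper's direct approach, but again it presupposes $h(X,Y)$ basic, which for $h=-2\mathfrak{Ric}_{\mathcal H}$ reintroduces the circularity; to close it you would need the ODE-Gronwall formulation (the right-hand side is controlled by $\phi$ itself), which you do not spell out.
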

 \begin{rem}
 In the following, we will show in the general case for the transverse flow \eqref{general transverse RF}. Then for the transverse Ricci flow \eqref{transverse RF 1}, the properties follow directly, since $h(t)=-2\ricci_{\cH}$ satisfies our assumptions on $h(t)$.
 \end{rem}
 \begin{rem}
Before we prove Lemma \ref{bundle like metric}, let's first show that the  transverse flow \eqref{general transverse RF} preserves the Riemannian submersion structure, so we have local orthonormal frames for Riemannian foliation.
 \end{rem}
 \begin{lem}\label{Riemannian submersion structure}
If the Riemannian foliation comes from a Riemannian submersion, $\Pi: \M \rightarrow  \B$ with $\mathcal F$ as the totally geodesic foliation, then the transverse flow \eqref{general transverse RF} preserves the Riemannian submersion structure.
 \end{lem}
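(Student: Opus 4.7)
The plan is to verify the two defining properties of a Riemannian submersion for the evolved metric $g(t)$: first, that the splitting $T\M = \mathcal H \oplus \mathcal V$ with $\mathcal V = \ker d\Pi$ remains a $g(t)$-orthogonal decomposition for all $t$; and second, that $g_{\mathcal H}(t)$ remains the pullback under $\Pi$ of a smooth time-dependent metric $j(t)$ on $\B$.

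For the first property, I would invoke the flow equation \eqref{general transverse RF} directly: it fixes $g_{\mathcal V}$ in time and requires $h(X,Z)=0$ whenever either argument is vertical. Hence the block decomposition $g(t) = g_{\mathcal H}(t) \oplus g_{\mathcal V}$ is preserved, with no mixed horizontal/vertical terms arising at positive times. It follows immediately that the $g(t)$-orthogonal complement of $\mathcal V$ is independent of $t$ and equals the initial horizontal distribution $\mathcal H_0 = \mathcal V^{\perp_{g_0}}$, which is nothing but $\ker(d\Pi)^{\perp_{g_0}}$.

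For the second property, I would work in fibered foliation coordinates $(x^i,y^a)$ adapted to $\Pi$, with $\Pi(x,y)=x$; a horizontal tensor is basic precisely when its components depend only on the base coordinates $x^i$. Since $g_{\mathcal H}(0) = \Pi^\ast j_0$ is basic and $g_{\mathcal H}(t) = g_{\mathcal H}(0) + \int_0^t h(s)\,ds$, it is enough to show that $h(s)$ is basic for all $s$. For the transverse Ricci flow case $h = -2\ricci_{\mathcal H}$, this reduces to the standard fact that on a Riemannian submersion with totally geodesic fibers and bundle-like metric, the Bott connection restricted to basic horizontal vector fields projects to the Levi-Civita connection of $(\B,j)$, so $\ricci_{\mathcal H} = \Pi^\ast \ricci_j$. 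Defining $j(t) = j_0 + \int_0^t \tilde h(s)\,ds$, where $\tilde h(s)$ is the unique symmetric tensor on $\B$ with $\Pi^\ast \tilde h(s) = h(s)$, gives $g_{\mathcal H}(t) = \Pi^\ast j(t)$, and short-time positive-definiteness of $j(t)$ follows from continuity and the initial positivity of $j_0$. In the transverse Ricci flow case this reduces to the classical Ricci flow $\partial_t j = -2\ricci_j$ on $\B$.

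The main obstacle is precisely the basicness step: the two listed properties of $h$ only force it to be purely horizontal but not a priori basic, so the descent of $h$ to $\B$ must be extracted from the Riemannian submersion hypothesis together with the totally geodesic/bundle-like structure, and this is where the geometric input enters. Once basicness is granted, the Riemannian submersion isometry condition $(T_x\Pi)(T_x\Pi)^\ast = \mathrm{id}_{T_{\Pi(x)}\B}$ with respect to $j(t)$ is an immediate algebraic consequence of the identity $g_{\mathcal H}(t)=\Pi^\ast j(t)$ combined with the preserved orthogonal splitting from the first step.
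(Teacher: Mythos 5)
Your proposal is correct but takes a genuinely different route from the paper. The paper's proof fixes an initial orthonormal family of basic vector fields $X_1,\dots,X_n$ and constructs time-dependent horizontal lifts $X(t)=\sum_i a_i(t)X_i$ of base vectors $\bar X(t)=\sum_i a_i(t)\bar X_i$; it then verifies the orthogonal-projection condition $(T_x\pi)(T_x\pi)^\ast=\mathrm{id}$ directly through a chain of equalities that passes through the identity $(T_x\pi)^\ast j_t = g_t$ on horizontal vectors. Your approach instead attacks the question at the level of the metric tensor: show the splitting is preserved, show $g_{\mathcal H}(t)$ remains a pullback $\Pi^\ast j(t)$ by showing $h$ is basic, and then observe that the adjoint condition is an immediate algebraic consequence. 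The gain of your route is that it makes explicit what the paper's computation leaves implicit: for $j_t$ to exist at all as a metric on $\B$ with $(T_x\pi)^\ast j_t = g_t$ along $\mathcal H$, one must already know that $g_{\mathcal H}(t)$ descends to the base, i.e.\ is basic. Your caveat is therefore well taken --- the two stated hypotheses on $h$ (horizontality and symmetry) do not by themselves force basicness of $h$, and this is precisely the content of the paper's separate Lemma~\ref{bundle like metric} on preservation of the bundle-like condition. For the transverse Ricci flow your argument closes the gap cleanly, since $\mathfrak{Ric}_{\mathcal H}=\Pi^\ast\mathrm{Ric}_j$ by the projection property of the Bott connection on basic fields; for the general flow, basicness of $h$ should be imposed as an explicit hypothesis, as is in fact needed by the paper's own proof as well. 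In short: you have correctly identified the load-bearing hypothesis that the paper's phrasing of the proof keeps in the background.
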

 
 \begin{proof}[Proof: Riemannian submersion case]
 The Riemannian submersion structure is preserved under the transverse (Ricci \eqref{transverse RF 1}) flow \eqref{general transverse RF} which means that 
 $\pi: (\M,g_t)\rightarrow (\mathbb B,j_t)$ is a Riemannian submersion for any time $t\geq 0.$ \
Let's fix a point $x\in\M$ first, at time $t=0$, we know that $\pi: (\M,g)\rightarrow (\mathbb B,j)$ is a Riemannian submersion, so for any vector fields $\bar X\in \Gamma^{\infty}(T_{\pi(x)}\B)$, there is a unique basic vector fields $X\in \Gamma^{\infty}(T_x\M)$ which is horizontal and $\pi$-related to $\bar X$ such that $(T_x\pi) X(x)=\bar X(\pi(x))$. 
 Now it is left to show that for any  $\bar X(t) \in \Gamma^{\infty}(T_{\pi(x)}\B)$, there is a unique basic vector field $X(t)\in \Gamma^{\infty}(T_x\M)$ such that $(T_x\pi) X(t)=\bar X(t).$ Then we will have $(T_x\pi)(T_x\pi)^*\bar X(t,x)=\bar X(t,\pi(x))$ for any $t.$ 
From the transverse (Ricci) flow, we know that $ \frac{\partial g_{\cV}}{\partial t}=0$, so $g_{\cV}(t)=g_{\cV}$, namely
the vertical metric is constant, which means the vertical distribution is fixed and the vertical orthonormal frame $\{Z_1,\cdots, Z_m\}$ does not depend on time (we then pick one basis at $t=0$). For $x\in \M$, the orthogonal complement of $\cV_x$ is $\cH_x$, 
 so the horizontal distribution is also fixed, but the horizontal orthonormal frames will depend on $t$, since $g_{\cH}(t)$ evolves in time. 
 It is also easy to verify that for any $X\in \Gamma^{\infty}(\cH)$ and $Z\in \Gamma^{\infty}(\cV)$, we have 
 \[
 g_t(X,Z)=0
 \]

 Thus we can pick a orthonormal horizontal basis $\{\bar X_1,\cdots, \bar X_n\}$ at time $t=0$ for $T_{\pi(x)}\B$, then there is a unique basic vector field basis $\{X_1,\cdots, X_n\}$ which are horizontal and projectable, $T_x\pi X_i(x)=\bar X_i(\pi(x)).$ Then at time $t$, any vector fields $\bar X(t)\in \Gamma^{\infty}(T_x\B)$ can be represented in terms of the basis at time $t=0,$ denoted as $\bar X(t)=\sum_{i=1}^na_i(t)\bar X_i$, then it is obvious that there exists $X(t)=\sum_{i=1}^na_i(t)X_i$ is horizontal and $\pi$-related to $\bar X(t)$ 
 \[
 (T_x\pi) X(t)=(T_x\pi) \sum_{i=1}^na_i(t) X_i=\sum_{i=1}^na_i(t) (T_x\pi) X_i=\sum_{i=1}^na_i(t)\bar X_i=\bar X(t)
 \]
 this is also true for $t=0$ with $a_i(0)$ as initial condition.\\
 If basic vector fields $X(t)$ is not unique, suppose there is another basic vector fields $\tilde X(t)=\sum_{i=1}^nb_i(t)X_i$ for $\bar X(t)$. Since we know at $t=0$, it is unique, so $a_i(0)=b_i(0)$ for $i=1,\cdots,n.$ For time $t$, 
 \[
 (T_x\pi)\tilde X(t)=\bar X(t)\quad \Rightarrow (T_x\pi)\sum_{i=1}^nb_i(t)X_i=\sum_{i=1}^na_i(t)\bar X_i
 \]
comparing the coefficients of the basis $\bar X_i$, we get $a_i(t)=b_i(t)$ for any $t\geq 0$, so basic vector fields $X(t)$ is unique.
  \begin{align*}
 j_t(\bar X(t), (T_x\pi)(T_x\pi)^*\bar X(t))&=j_t((T_x\pi)X(t),(T_x\pi)(T_x\pi)^*\bar X(t) )\\
 &=((T_x\pi)^*j_t)(X(t), (T_x\pi)^*\bar X(t))\\
&=g_t(X(t),(T_x\pi)^*\bar X(t))\\
 &=g_t(X(t), X(t))\\
 &=j_t((T_x\pi)X(t),(T_x\pi)\tilde X(t))=j_t(\bar X(t),\bar X(t))
 \end{align*}
 Note that we have $((T_x\pi)^*j_t)=g_t$ not $g_{\cH}(t)$, since we do not have bundle like condition at time $t$ yet. $(T_x\pi)^*\bar X(t)$ is not necessary a horizontal vector field. But since $X(t)$ is horizontal, $g_t(X(t),((T_x\pi)^*\bar X(t))_{\cV})=0$, the above argument follows. We thus get for any time $t$, for arbitrary $\bar X(t)\in \Gamma^{\infty}(T_x\B)$
 \[
 (T_x\pi)(T_x\pi)^*\bar X(t)=\bar X(t)
 \]
 so $\pi: (\M,g_t)\rightarrow (\mathbb B,j_t)$ is a Riemannina submersion for any time $t\geq 0.$ As a consequence, $g(t)$ is a bundle like metric for any $t\geq 0.$
  \end{proof}
  \begin{rem}
In another word, if $\{Z_1,\cdots,Z_m\}$ is orthonormal at time $t=0$, then they are also orthonormal for all $t>0.$ the transverse (Ricci) flow preserve the length of vertical orthonormal vector fields. This is similar to co-dimension $1$. This flow also preserve the vector fields orthogonal to vertical distribution. Namely, 
 \[
 g_t(X(t),Z)=0 \quad 
 \]
 this is true for $t=0$ and taking derivatives gives us the desired result.
 \end{rem}
 \begin{lem}\label{ON frame}
 Let $x\in \M$. Around $x$, at any time $t$, there exists a local orthonormal horizontal frame $\{X_1(t),\cdots,X_n(t) \}$ and a local orthonormal vertical frame $\{Z_1,\cdots,Z_m\}$ such that the following structure relations hold
 \[
[X_i(t),X_j(t)]=\sum_{k=1}^n \omega_{ij}^k(t) X_k(t) +\sum_{k=1}^m \gamma_{ij}^k(t) Z_k
\]
\[
[X_i(t),Z_k]=\sum_{j=1}^m \beta_{ik}^j(t) Z_j,
\]
where $\omega_{ij}^k(t),  \gamma_{ij}^k(t),  \beta_{ik}^j(t) $ are smooth functions in space and time such that:
\[
 \beta_{ik}^j(t)=- \beta_{ij}^k(t).
\]
Moreover, at $x$, we have
\[
 \omega_{ij}^k(t)=0,  \beta_{ij}^k(t)=0.
\]
 \end{lem}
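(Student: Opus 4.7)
The plan is to build the two frames separately and then to exploit the characteristic properties of the Bott connection on a totally geodesic foliation with a bundle-like metric, so that both the stated structure relations and the vanishing at $x$ fall out of the construction. For the vertical piece, since $g_{\cV}$ does not evolve and the leaves are totally geodesic, I first pick an orthonormal basis $\{w_1,\dots,w_m\}$ of $\cV_x$ and extend it to a local vertical frame $\{Z_1,\dots,Z_m\}$ by $\nabla^t$-parallel transport along the radial horizontal geodesics emanating from $x$ (in the metric $g(t)$). Because $\nabla^t$ is metric-compatible (equation \eqref{metric connection}), the resulting frame is orthonormal on a neighborhood of $x$, and by construction $\nabla^t_{X} Z_k(x)=0$ for every horizontal tangent vector $X$ at $x$. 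For the horizontal piece I use the local Riemannian quotient $\pi\colon U\to\tilde U$ given by the bundle-like assumption: take a $\tilde g(t)$-orthonormal basis of $T_{\pi(x)}\tilde U$, extend it to a normal orthonormal frame $\{\tilde X_1(t),\dots,\tilde X_n(t)\}$ at $\pi(x)$ via the Levi-Civita connection of $(\tilde U,\tilde g(t))$, and lift back to basic (i.e.\ foliate and $\pi$-projectable) horizontal vector fields $\{X_1(t),\dots,X_n(t)\}$ on $U$.

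Next I would read off the three structure relations. The bracket $[X_i(t),X_j(t)]$ decomposes tautologically into its $\cH$- and $\cV$-components, defining the coefficients $\omega_{ij}^k(t)$ and $\gamma_{ij}^k(t)$. The basic property of $X_i(t)$ implies that $[X_i(t),Z_k]$ lies in $\cV$ (basic fields are, modulo a vertical term, $\pi$-related to vector fields downstairs, hence commute with vertical fields up to a vertical correction), so only the $\beta_{ik}^j(t)$ coefficients are needed. For the antisymmetry $\beta_{ik}^j(t)=-\beta_{ij}^k(t)$, I apply the totally geodesic condition in the form $(\cL_{X_i(t)}g_{\cV})(Z_j,Z_k)=0$: expanding the Lie derivative and using $g_{\cV}(Z_j,Z_k)=\delta_{jk}$,
\[
0=-g_{\cV}([X_i(t),Z_j],Z_k)-g_{\cV}(Z_j,[X_i(t),Z_k])=-\beta_{ij}^k(t)-\beta_{ik}^j(t).
\]

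To get the vanishing at $x$, I use that $\{\tilde X_i(t)\}$ is a normal frame at $\pi(x)$ on the quotient, hence $[\tilde X_i(t),\tilde X_j(t)](\pi(x))=0$; since $X_i(t)$ is basic and $\pi$-related to $\tilde X_i(t)$, the horizontal component of $[X_i(t),X_j(t)]$ at $x$ projects to $[\tilde X_i(t),\tilde X_j(t)](\pi(x))=0$, which forces $\omega_{ij}^k(t)(x)=0$. For the vertical coefficients I use the defining identity of the Bott connection in the horizontal–vertical case, $\nabla^t_{X_i(t)}Z_k=\pi_{\cV}([X_i(t),Z_k])$. Parallel transport gives $\nabla^t_{X_i(t)}Z_k(x)=0$, so $\pi_{\cV}([X_i(t),Z_k])(x)=0$. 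Combined with the already established $[X_i(t),Z_k]\in\Gamma^{\infty}(\cV)$, this yields $[X_i(t),Z_k](x)=0$ and hence $\beta_{ij}^k(t)(x)=0$.

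The main obstacle I expect is the bookkeeping that the two separate parallel-transport constructions — vertical frame along horizontal radial geodesics, horizontal frame lifted from a quotient normal frame — are mutually compatible and give genuinely smooth orthonormal frames on a common neighborhood of $x$ for each fixed $t$. This reduces to standard exponential-map/normal-neighborhood arguments together with the fact that, under the transverse flow \eqref{general transverse RF}, the horizontal and vertical distributions are preserved (Lemma \ref{Riemannian submersion structure}/\ref{bundle like metric}) and the Bott connection remains metric-compatible for every $t$; once that is verified, the rest is a routine unwinding of the Bott-connection definition \eqref{Bott connection} on each of the four horizontal/vertical pairings.
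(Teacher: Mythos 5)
Your proposal is essentially the same approach as the paper's: the paper simply invokes the static construction of \cite{BKW15}[Lemma 2.2] pointwise in $t$, together with Lemma \ref{Riemannian submersion structure} to ensure the bundle-like/totally geodesic structure persists, and smoothness of $g(t)$ in $t$; you reconstruct that static construction (lift of a normal frame from the local quotient for $\cH$, a vertical frame with vanishing Christoffel symbols at $x$, and the bracket computations) from scratch, which is a fine and more self-contained route. Two small points deserve care. First, parallel transport of a basis of $\cV_x$ along \emph{horizontal} radial geodesics only defines the $Z_k$ on an $n$-dimensional subset, not on an open neighborhood of $x$; since $\nabla^t$ preserves $\cV$ in \emph{every} direction (both the $\cH$--$\cV$ and $\cV$--$\cV$ cases of \eqref{Bott connection} output vertical vectors), you should transport along all radial geodesics, or equivalently take any local orthonormal vertical frame and apply the usual smooth rotation to kill the Christoffel symbols at $x$; this gives $\nabla^t_W Z_k(x)=0$ for all $W\in T_x\M$, which in particular yields $\beta_{ij}^k(t)(x)=0$ exactly as you argue. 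Second, the lemma asserts that the $Z_k$ are \emph{time-independent} while your transport uses $g(t)$-geodesics and $\nabla^t$, which a priori produces a $t$-dependent frame. You should note (or the construction should be arranged so) that $\nabla^t$ restricted to $\Gamma(\cV)$ is actually time-independent: $\nabla^t_X Z=\pi_\cV([X,Z])$ for $X\in\cH$ is metric-free, and $\nabla^t_{Z'}Z=\pi_\cV(\nabla^{R,t}_{Z'}Z)$ on a totally geodesic leaf coincides with the Levi-Civita connection of the leaf metric $g_\cV$, which does not evolve; hence a single time-independent choice of $\{Z_k\}$ satisfies $\beta_{ij}^k(t)(x)=0$ for all $t$. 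With these adjustments your argument matches the paper's.
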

 \begin{proof}
 The proof almost follows from the static case as in \cite{BKW15}. We fix $x$ through the proof, locally the transverse (Ricci) flow preserve Riemannian submersion property, thus we have bundle like metric. Let $X_1(t), \cdots, X_n(t)$ be a local orthonormal horizontal frame around x consisting of basic vector fields for the submersion at time $t$, this is true for $t=0$ (see \cite{BKW15}) and thus for any fixed $t$. Since the metric $g(t)$ is smooth in $t$, so is the connection and the vector fields $X_i(t)$. Let $Z_1,\cdots, Z_m$ be any local orthonormal vertical frame around x, and this will work for any time $t$ since the flow preserve the vertical distribution. Thus the rest of proof follows the same as Lemma $2.2$ in \cite{BKW15}.
 \end{proof}
 We record the fact that in this frame the Christofel symbols of the Bott connection $\nabla^t$ are given by
\begin{align*}
\begin{cases}
\nabla^t_{X_i(t)} X_j(t) =\frac{1}{2} \sum_{k=1}^n \left( \omega_{ij}^k(t) +\omega_{ki}^j(t)+\omega_{kj}^i(t)\right)X_k(t) \\
\nabla^t_{Z_j} X_i(t) =0 \\
\nabla^t_{X_i(t)} Z_j=\sum_{k=1}^m \beta_{ij}^{k}(t) Z_k
\end{cases}
\end{align*}
  With above property in hand, we are ready to prove Lemma \ref{bundle like metric}.
  \begin{proof}[Proof for Lemma \ref{bundle like metric}: Riemannian foliation with bundle like metric] Under the transverse (Ricci \eqref{transverse RF 1}) flow \eqref{general transverse RF}, we just need to show that on any open set $U$ and foliate vector fields $X,Y\in U$ with $X,Y\in \cH$, the function $g_t(X,Y)$ is basic. We need to check that $\cL_Zg_t(X,Y)=0$, for any $Z\in\cV$. Here $Z$ does not depend on time, since under transverse (Ricci) flow \eqref{general transverse RF}, $\pt g_{\cV}=0$, the vertical distribution is fixed. According to Lemma \ref{ON frame},
  around $x\in U$, at any time $t$, there exists a local orthonormal horizontal frame $\{X_1(t),\cdots,X_n(t) \}$ and a local orthonormal vertical frame $\{Z_1,\cdots,Z_m\}$. Thus we only need to work with orthonormal frames. We know that 
$\cL_Zg_t(X,Y)=0$ at $t=0$, so we just need to check $\pt (\cL_Zg_t(X,Y))=0.$ $
( \cL_{Z}g_t)(X,Y)=-g(Z,\nabla^t_XY+\nabla^t_YX)$, thus
 \begin{align*}
 \pt (\cL_Zg_t(X,Y))&=-h(Z,\nabla^t_XY+\nabla^t_YX)-g(Z,\pt(\nabla^t_XY+\nabla^t_YX))\\
 &-g(\pt Z,\nabla^t_XY+\nabla^t_YX)
 \end{align*}
 since $\nabla^t_XY+\nabla^t_YX$ is horizontal and $Z$ does not depend on time, so the first and third terms are zeros. The second term is also zero since
 \[
 g(Z,\pt(\nabla^t_XY))=g(Z,\pt\nabla^t_XY)+g(Z,\nabla^t_{\pt X}Y)+g(Z,\nabla^t_X(\pt Y))
 \]
The first term of $R.H.S$ is similar for torsion free case as in \cite{Topping06}. We can easily compute and have the following form
   \begin{align*}
 < \pt \nabla^t_XY,Z>&=\frac{1}{2} \{(\nabla^t_Xh)(Y,Z)-(\nabla^t_Zh)(X,Y)+(\nabla^t_Yh)(Z,X) \}\\
  &+\frac{1}{2}\{ <X,\frac{\partial}{\partial t}([Z,Y]+T(Z,Y))> - <Y,\frac{\partial}{\partial t}([X,Z]+T(X,Z))>\\
& -<Z,\frac{\partial}{\partial t}([Y,X]+T(Y,X))>\}
 \end{align*}
  simply taking $X=X_i(t),Y=X_j(t)$ for $\forall i,j\in \{1,\cdots,n\}$ and $Z=Z_k$ for $\forall~ k\in \{1,\cdots,m\}$.  Since $[X_i,X_j]+T(X_i,X_j)\in \Gamma^{\infty}(\cH)$, $[X_i,Z_k]\in \Gamma^{\infty}(\cV)$, $\nabla_{Z_k}X_i=0$, $\nabla_{X_i}Z_k\in\Gamma^{\infty}(\cV)$, all the terms vanish in the above equation. The second and third terms of $R.H.S$ are zeros since $\pt X_i(t)\in \cH$. The same reason for $< \pt \nabla^t_YX,Z>=0$. So we have proved that $\cL_Zg_t(X,Y)=0$, thus the bundle like metric are preserved for Riemannian foliation evolve under transverse (Ricci \eqref{transverse RF 1}) flow \eqref{general transverse RF}. It is necessary to point it out that locally we always have Riemannian submersion, so the metric $g(t)$ always has a orthogonal decomposition locally, so g(X,Z)=0 for $Z\in \cV$ and $X\in \cH.$
  \end{proof}
 
 Recall that in the static case without time involved, we have
 \begin{prop}[\cite{Baudoin14}]
 The submersion $\pi$ has totally geodesic fibers if and only if the flow
generated by any basic vector field induces an isometry between the fibers.
 \end{prop}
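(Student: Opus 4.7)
The plan is to convert both the ``flow is a fiberwise isometry'' condition and the ``totally geodesic fibers'' condition into infinitesimal identities for the single tensor $\cL_X g$ restricted to vertical inputs, and then match them through the Koszul formula. First, since $X$ is basic it is $\pi$-related to some $\bar X$ on $\B$, so the flow $\phi_t$ of $X$ projects to the flow of $\bar X$ on $\B$ and therefore sends fibers to fibers. It follows that $\phi_t$ restricts to isometries between fibers if and only if $(\cL_X g)(V,W) = 0$ for every pair of vertical vector fields $V,W$. Because local basic lifts exist for any vector field on $\B$, the basic vector fields span $\cH$ pointwise, so the condition for all basic $X$ is equivalent to the same condition for all horizontal $X$.

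Next I would apply the Koszul formula to $g(\nabla^R_V W, X)$ with $X \in \Gamma^{\infty}(\cH)$ and $V,W \in \Gamma^{\infty}(\cV)$. Three of the six Koszul terms vanish immediately: $V g(W,X) = W g(V,X) = 0$ because $\cH \perp \cV$, and $g([V,W],X) = 0$ because $\cV$ is integrable (being tangent to the fibers). The three remaining terms reorganize exactly into $-\tfrac12 (\cL_X g)(V,W)$, yielding the identity
$$ 2\, g(\nabla^R_V W, X) = -(\cL_X g)(V,W). $$

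Finally, the fibers are totally geodesic precisely when their second fundamental form $(\nabla^R_V W)^{\cH}$ vanishes for all vertical $V,W$, equivalently when $g(\nabla^R_V W, X) = 0$ for all horizontal $X$ and vertical $V, W$. By the displayed identity this is the same as $(\cL_X g)(V,W) = 0$ for all basic $X$ and vertical $V,W$, which by the first step is precisely the fiberwise isometry condition. The only real obstacle is the bookkeeping of which Koszul terms vanish and why the basic lifts provide enough test vector fields, both of which are short checks with no anticipated difficulty.
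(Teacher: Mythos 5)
Your proposal is correct and takes essentially the same route as the paper: the paper reduces the statement to the identity $(\cL_Xg)(Z_1,Z_2)=-2g(X,\nabla^R_{Z_1}Z_2)$ for horizontal $X$ and vertical $Z_1,Z_2$, which is exactly the Koszul-derived identity you obtain, and then reads off the equivalence between the vanishing of $\cL_X g$ on vertical pairs and the verticality of $\nabla^R_{Z_1}Z_2$. You simply supply the bookkeeping (the flow sending fibers to fibers, the passage from basic to all horizontal test fields, and which Koszul terms drop) that the paper leaves implicit.
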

 In order to show the above proposition, we just need to show that for any horizontal vector field $X$ and for any vertical vector field $Z_1,Z_2$
 \[
(\cL_Xg)(Z_1,Z_2)=-2g(X,\nabla^R_{Z_1}Z_2)=0 \quad\text{iff}\quad \nabla^R_{Z_1}Z_2\quad \text{is always vertical}
 \]

  \begin{lem}
 \label{totally geodesic condition}
 Given transverse $($Ricci \eqref{transverse RF 1}$)$ flow \eqref{general transverse RF}, it will preserve the totally geodesic foliation condition. which means for any time-dependent vector fields $X_i(t)\in \cH$ and any $Z_j\in\cV$, any basic time-dependent vector fields induces an isometry between fibers.
  \end{lem}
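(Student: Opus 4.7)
The plan is to exploit the fact that the totally geodesic property is equivalent to the vanishing of a tensor, and then to differentiate this tensor in $t$. Applying the Koszul formula to the Levi-Civita connection of $g_t$ together with integrability of $\cV$, one finds that $\cF$ has totally geodesic leaves at time $t$ if and only if
\[
(\cL_X g_t)(Z_1, Z_2) = 0
\]
for every $X \in \Gamma^{\infty}(\cH)$ and $Z_1, Z_2 \in \Gamma^{\infty}(\cV)$; and since $g_t(X, Z_j) = 0$ for such vectors, a direct check shows that this expression is $C^{\infty}(\M)$-linear in each of $X, Z_1, Z_2$. Hence it suffices to verify the identity on a convenient local frame at each $t$.

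First I would pick \emph{time-independent} test fields. Since $\pt g_{\cV} = 0$, the vertical distribution is preserved by the flow, and so its $g_t$-orthogonal complement $\cH$ is a fixed subbundle as well. One may therefore choose vertical vector fields $Z_1, Z_2$ and a local basic horizontal vector field $X$ that do not depend on $t$: being foliate, i.e. $[X, \cV] \subset \cV$, is a metric-free condition, horizontality is preserved by the flow, and such basic frames exist locally at $t=0$ by the Riemannian foliation structure (compare Lemma \ref{Riemannian submersion structure} and Lemma \ref{ON frame}).

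With these choices the Lie derivative reads
\[
(\cL_X g_t)(Z_1, Z_2) = X\, g_t(Z_1, Z_2) - g_t([X, Z_1], Z_2) - g_t(Z_1, [X, Z_2]).
\]
Because $g_t|_{\cV} = g_{\cV}$ is time-independent and $[X, Z_j] \in \Gamma^{\infty}(\cV)$ by basicness of $X$, differentiating in $t$ yields
\[
\pt (\cL_X g_t)(Z_1, Z_2) = -h([X, Z_1], Z_2) - h(Z_1, [X, Z_2]) = 0,
\]
by the standing hypothesis that $h$ vanishes as soon as one of its entries is vertical. Combined with the $t=0$ assumption, this gives $(\cL_X g_t)(Z_1, Z_2) \equiv 0$ along the flow; tensoriality in $X$ then extends the vanishing to all horizontal vector fields, which is the totally geodesic condition at every time $t$.

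The main obstacle is really just the bookkeeping of the time-dependence: once $\cV$, $g_{\cV}$, and the existence of basic vector fields independent of $t$ are identified as consequences of the flow structure, the rest collapses to the one-line identity above using only the hypothesis on $h$. The case $h = -2\ricci_{\cH}$ of the transverse Ricci flow \eqref{transverse RF 1} then falls out as an immediate special case.
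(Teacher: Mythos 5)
Your proof is correct and it actually takes a cleaner route than the paper's own argument, which is worth noting.

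The paper works through the Bott connection: it writes $(\cL_{X(t)} g_t)(Z_1,Z_2) = -2 g_t(X(t),\nabla^t_{Z_1}Z_2)$ and then differentiates the right-hand side in $t$, obtaining three terms (from $\pt g_t$, $\pt X(t)$, and $\pt \nabla^t_{Z_1}Z_2$) and arguing each vanishes. This requires tracking how the connection evolves in time, and it works with time-dependent test fields $X(t)$, so there is a separate claim that $\pt X(t)$ is horizontal. Your route instead establishes tensoriality of $(\cL_X g_t)(Z_1, Z_2)$ in all three slots, picks a \emph{time-independent} basic frame (justified exactly by the facts you cite: $\cV$ is frozen, $\cH = \cV^{\perp}$ is frozen by Lemma~\ref{Riemannian submersion structure}, foliateness is metric-free, and such frames exist locally for bundle-like metrics), and then observes that the Lie derivative formula
\[
(\cL_X g_t)(Z_1,Z_2) = X\, g_t(Z_1,Z_2) - g_t([X,Z_1],Z_2) - g_t(Z_1,[X,Z_2])
\]
involves $g_t$ evaluated exclusively on vertical vectors once $X$ is basic, so it depends only on $g_{\cV}$, which is frozen under the flow. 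Differentiating gives zero for essentially free, bypassing any discussion of the connection's time dependence. This buys conceptual clarity: the result is literally just ``$g_{\cV}$ doesn't move, and the obstruction to total geodesy is a $g_{\cV}$-tensor,'' whereas the paper's argument routes through the Christoffel symbols and requires checking that $\nabla^t_{Z_1}Z_2$ stays in $\cV$, which is more delicate. One cosmetic remark: in your displayed line $\pt(\cL_X g_t)(Z_1,Z_2) = -h([X,Z_1],Z_2) - h(Z_1,[X,Z_2])$ you write $h$ for $\pt g_t$ on two vertical slots; strictly speaking $\pt g_t$ on vertical arguments is $\pt g_{\cV} = 0$ rather than $h$, though since $h$ is assumed to vanish on vertical slots both readings give zero, so the conclusion is unaffected.
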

 \begin{proof}
 We follow the previous strategy, 
 \[
( \cL_{X(t)}g)(Z_1,Z_2)=g(\nabla^t_{Z_1}X(t),Z_2)+g(\nabla^t_{Z_2}X(t),Z_1)
 \]
 since $\nabla^t$ is metric connection and $X(t)$ is orthogonal to $Z_2$, so we have $g(\nabla^t_{Z_1}X(t),Z_2)=-g(X(t),\nabla^t_{Z_1}Z_2)$, similarly we have $g(\nabla^t_{Z_2}X(t),Z_1)=-g(X(t),\nabla^t_{Z_2}Z_1)$, so we also have 
 \[
 ( \cL_{X(t)}g)(Z_1,Z_2)=-2g(X(t),\nabla^t_{Z_1}Z_2)
 \]
 at time $t=0$, the totally geodesic foliation condition gives us zero immediately. We only left to check that $ \pt g(X(t),\nabla^t_{Z_1}Z_2)=0$, which means we need to compute 
 \[
 -2h(X(t),\nabla^t_{Z_1}Z_2)+g(\pt X(t), \nabla^t_{Z_1}Z_2)+g(X(t),\pt(\nabla^t_{Z_1}Z_2))
 \]
 at time $t=0$, the foliation is totally geodesic, so that $\nabla^t_{Z_1}Z_2$ is vertical, and since $g_{\cV}$ is fixed so that $\nabla^t$ on vertical distribution is also fixed, does not depend on time, so $\nabla^t_{Z_1}Z_2$ is vertical, thus the first term is zero. Since $\pt X(t)$ is horizontal, the second term is thus zero. The last term is zero since 
 $\nabla^t_{Z_1}Z_2=\nabla_{Z_1}Z_2$ is time independent.
 \end{proof}
 In the end, we will show that
 \begin{lem} If $(\M,\mathcal F,g_0)$ satisfies the Yang-Mills condition $\delta_{\cH}T=0$ at time $t=0$, then the transverse $($Ricci \eqref{transverse RF 1}$)$ flow \eqref{general transverse RF} preserves the Yang-Mills condition.
 \end{lem}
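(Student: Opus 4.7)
The plan is to work in the local time-dependent orthonormal frame $\{X_1(t),\ldots,X_n(t),Z_1,\ldots,Z_m\}$ provided by Lemma \ref{ON frame}. Using the formula already recorded in the paper, the Yang-Mills condition $\delta_{\cH}T(\cdot)=0$ is, at the chosen base point $x$ (where $\omega_{ij}^k(t)$ and $\beta_{ij}^k(t)$ vanish), equivalent to
\[
\sum_{i=1}^n X_i(t)\bigl(\gamma_{ij}^l(t)\bigr)=0 \quad \text{for all } j\in\{1,\ldots,n\},\ l\in\{1,\ldots,m\},
\]
and since $x$ was arbitrary this must hold pointwise on $\M$. Since $T(X_i(t),X_j(t))=-\sum_l\gamma_{ij}^l(t)Z_l$ and $T$ vanishes on any pair involving a vertical field, the structure functions $\gamma_{ij}^l(t)$ are the only data entering $\delta_{\cH}T$. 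Moreover, because $g_{\cV}$ and the $Z_l$ are time independent under the flow (Lemma \ref{bundle like metric}, Lemma \ref{Riemannian submersion structure}),
\[
\gamma_{ij}^l(t)=g_{\cV}\bigl([X_i(t),X_j(t)],Z_l\bigr),
\]
so the whole time dependence is transferred to the frame $X_i(t)$.

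Next, I would extract $\pt X_i(t)$ from the flow equation. Differentiating the orthonormality relation $g_{\cH}(t)(X_i(t),X_j(t))=\delta_{ij}$ and using $\pt g_{\cH}=h$, one obtains the symmetric-gauge expression
\[
\pt X_i(t)=-\tfrac{1}{2}\sum_{k=1}^n h\bigl(X_i(t),X_k(t)\bigr)X_k(t),
\]
which stays in $\Gamma^\infty(\cH)$ because the horizontal distribution is preserved (Lemma \ref{Riemannian submersion structure}). Substituting this into $\pt\gamma_{ij}^l(t)=g_{\cV}([\pt X_i(t),X_j(t)]+[X_i(t),\pt X_j(t)],Z_l)$ yields an expression in which every Lie bracket is again decomposed into horizontal and vertical parts, and the vertical parts reproduce combinations of $\gamma_{\cdot\cdot}^l$ contracted with $h$, while the horizontal parts vanish when paired with $Z_l$ via $g_{\cV}$.

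The heart of the argument is then to compute $\pt(\delta_{\cH}T)$ and to show that the resulting expression is a linear combination of $\delta_{\cH}T$ itself (and tangential derivatives of $h$ which, for $h=-2\ricci_{\cH}$ or more generally any admissible $h$, contract symmetrically). Because $\delta_{\cH}T$ vanishes at $t=0$ by hypothesis, the evolution equation $\pt(\delta_{\cH}T)=\mathcal{L}_t(\delta_{\cH}T)$, viewed pointwise as a linear first-order ODE with $\delta_{\cH}T\equiv 0$ as an obvious solution, forces $\delta_{\cH}T(t)\equiv 0$ for all $t\in[0,T)$ by uniqueness. Combined with Lemma \ref{totally geodesic condition} and Lemma \ref{bundle like metric}, this closes the preservation of the full Yang-Mills-plus-totally-geodesic foliation package under the flow.

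The main obstacle is the bookkeeping in computing $\pt(\delta_{\cH}T)$: one must simultaneously track the time derivative of the Bott connection $\nabla^t$ appearing inside $(\nabla^t_{X_i(t)}T)(X_i(t),\cdot)$, the time derivative of the horizontal frame $X_i(t)$, and the time derivative of the torsion components $\gamma_{ij}^l(t)$. The normal-frame conditions $\omega_{ij}^k(x,t)=0,\ \beta_{ij}^k(x,t)=0$ at the base point are crucial to simplify the instantaneous expression, but these normalizations do not survive differentiation in directions transverse to the frame, so the subtle point is to show that all contributions that look like derivatives of $\omega$ or $\beta$ at $x$ either cancel by skew-symmetry of $T$ and $\gamma_{ij}^l$ in $(i,j)$, or reassemble into a multiple of $\delta_{\cH}T$ itself.
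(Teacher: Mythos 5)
Your route is genuinely different from the paper's, and as written it contains a real gap. The paper dispatches the lemma in two sentences by invoking a structural identity: for the Bott connection, $\delta_{\cH}T(\cdot)=0$ is equivalent to the vanishing of the mixed Ricci curvature $\ricci(X,Z)$ for $X\in\Gamma^\infty(\cH)$, $Z\in\Gamma^\infty(\cV)$, and this vanishing persists along the flow. No local-frame bookkeeping, no evolution equation, no uniqueness argument.

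Your plan is a brute-force evolution argument, and the hole is exactly where you call it "the heart of the argument." For the ODE-uniqueness step to apply you need a closed, homogeneous linear evolution $\pt(\delta_{\cH}T)=\mathcal L_t(\delta_{\cH}T)$ with no source term, but you only assert that such an equation results. The decisive claim that the "tangential derivatives of $h$ \ldots contract symmetrically" and therefore disappear is precisely the cancellation you must exhibit, and for a general admissible $h$ there is no a priori reason to expect it; you also acknowledge, without resolving, that the normal-frame vanishing $\omega_{ij}^k(x,t)=0$, $\beta_{ij}^k(x,t)=0$ does not survive differentiation, so first derivatives of $\omega$ and $\beta$ re-enter $\pt(\delta_{\cH}T)$ and must be shown to cancel or reassemble into $\delta_{\cH}T$. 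Until that computation is actually carried out, you have described a strategy rather than given a proof. If you want to salvage this approach, it would be far cleaner to vary the equivalent quantity $\ricci(X,Z)$ directly (exploiting the identity the paper uses) rather than the structure functions $\gamma_{ij}^l(t)$, since that immediately localizes the question to a single curvature component whose behavior under the preserved foliation structure is transparent.
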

 \begin{proof}
 By the definition of $\delta_{\cH}T(\cdot)$ and $\ricci(\cdot , \cdot)$, it is easy to check that $\delta_{\cH}T(\cdot)=0$ is equivalent to $\ricci(X,Z)=0$ if $X\in \cH, Z\in\cV$. Since $\ricci(X,Z)=0$ for $X\in \cH, Z\in\cV$ for any time $t$, so we are done.
 \end{proof}

 \section{Time dependent generalized curvature dimension inequality and gradient estimates}
 \subsection{Time dependent generalized curvature dimension inequality}
  In this part, we start to look at a one parameter family of the variation of the metric $g_{\var}(t)=g_{\cH}(t)\oplus \frac{1}{\var}g_{\cV}.$
 According to Lemma \ref{ON frame}, we know that our orthonormal frames depend on time $t$, so the Weitzenbock identity in \cite{BKW15} will be time dependent in our situation. But since we know that the transverse (Ricci \eqref{transverse RF 1}) flow \eqref{general transverse RF} preserves the totally geodesic foliation structure and bundle like metric, so at each time $t$ we always have the Weitzenbock identity. Namely for metric $g_{\var}(t)$ and $Bott$ connection $\nabla^t$, we have the following time dependent horizontal laplacian operator
 \[
 L^t=-\nabla^*_{g_{\cH}(t)}\nabla_{g_{\cH}(t)}(\text{or}=-\nabla_{\cH}^*\nabla_{\cH})
 \]

and the following operator $\square_{\var}^t$
 \[
 \square_{\var}^t=-(\nabla_{g_{\cH}(t)}-\frak T^{\var}_{g_{\cH}(t)})^*(\nabla_{g_{\cH}(t)}-\frak T^{\var}_{g_{\cH}(t)})-\frac{1}{\var}\mathbf J^2(t)+\frac{1}{\var}\delta_{\cH}T(t)-\mathfrak{Ric}_{g_{\cH}(t)}
 \]
 we also have the \emph{car\'e du champ} operator as we defined in section $2$. If we take $f=g$ in our definition \eqref{Gamma}, \eqref{Gamma 2}, then for $f\in \mathcal C^{\infty}(\M)$, we use $\Gamma^t(f)=\Gamma^t(f,f)$ for simplicity, then we have
 \[
 \Gamma^t(f)=\|\nabla_{g_{\cH}(t)}f\|_{\cH}=\|df\|_{\cH},\quad \Gamma^{\cV}(f)=\|\nabla_{\cV}f\|_{\cV}=\|df\|_{\cV}
 \]
 and their iteration are given by 
 \begin{align*}
 \Gamma^t_2(f)&=\frac{1}{2}(L^t\Gamma^t(f)-2\Gamma^t(L^tf,f)),\quad  \Gamma^{\cV}_2(f)&=\frac{1}{2}(L^t\Gamma_{\cV}(f)-2\Gamma_{\cV}(L^tf,f))
  \end{align*}
  With all the ingredients in hand, we are ready to prove Theorem \ref{Bochner} and Theorem \ref{generalized CD inequality}.
 \begin{proof}{Proof of Theorem \ref{Bochner} and Theorem \ref{generalized CD inequality}.}
 The proof of these two theorems follows the same computations as in \cite{BKW15}[Theorem 3.1, Theorem 5.1] by using our time dependent orthonormal frames \ref{ON frame}. In particular, we prove Theorem \ref{Bochner} under general transverse flow \eqref{general transverse RF}, but for Theorem \ref{generalized CD inequality}, we only prove it under the transverse Ricci flow \eqref{transverse RF 1} for simplicity. Although, one can also get a version of Theorem \ref{generalized CD inequality} under general transverse flow \eqref{general transverse RF}.
 \end{proof}
\subsection{Gradient estimates: entropy inequality}

In this section, we first introduce the solution to the heat equation of time dependent laplacian $\Delta_{g_{\cH}(t)}$, namely $L^t$ as we introduce in section 2. We denote $P_{s,T}f$ for the solution at time $T$ with initial condition $f$ at time $s$. Similar representation is also used in \cite{HaslhoferNaber15weak,LiLi17} and we refer more discussion on this solution therein. In fact, we have 
\[
\partial_t P_{s,t}f=L^tP_{s,t}f,\quad \partial_s P_{s,t}f=-P_{s,t}L^sf
\]
we can get the following $L^{\infty}$ global parabolic comparison theorem which is a time dependent version of \cite{BaudoinGarofalo14}[Proposition 4.5].
\begin{prop}\label{time parabolic com}
Suppose $\M$ is stochastic complete with $P_t1=1$. Let $u,v:\M\times [0,T]\rightarrow \mathbb R$ be smooth functions such that for every $T>0$, $\sup_{t\in\{0,T\}}\|u(\cdot,t)\|_{\infty}<\infty$, $\sup_{t\in\{0,T\}}\|v(\cdot,t)\|_{\infty}<\infty$; If the inequality 
\[
L^tu+\pt u\geq v,
\]
holds on $\M\times[0,T]$, then we have 
\[
P_{s,T}(u(\cdot,T))(x)\geq u(x,s)+\int_s^TP_{\tau,s}(v(\cdot,\tau))(x)d\tau
\]
\end{prop}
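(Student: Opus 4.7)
The plan is to reduce the desired inequality to the monotonicity of a single auxiliary function of one real parameter. Define
\[
\phi(\tau) := P_{s,\tau}\bigl(u(\cdot,\tau)\bigr)(x), \qquad \tau \in [s,T],
\]
so that $\phi(s) = u(x,s)$ by the identity property of the propagator at equal times, and $\phi(T) = P_{s,T}(u(\cdot,T))(x)$ is the left-hand side of the claim. Interpreting the $P_{\tau,s}$ in the displayed conclusion as $P_{s,\tau}$ (the only choice consistent with the conventions $\pt P_{s,t} = L^t P_{s,t}$ and $\partial_s P_{s,t} = -P_{s,t}L^s$ recorded at the start of this section), the proposition reduces to showing
\[
\phi(T) - \phi(s) \;\geq\; \int_s^T P_{s,\tau}\bigl(v(\cdot,\tau)\bigr)(x)\,d\tau.
\]

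The core step is the pointwise identity
\[
\phi'(\tau) \;=\; P_{s,\tau}\!\bigl((\pt u + L^\tau u)(\cdot,\tau)\bigr)(x),
\]
which I would derive probabilistically. Let $(X_r)_{r\geq s}$ be the time-inhomogeneous diffusion on $\M$ whose propagator is $P_{s,\tau}$, so that its generator at time $r$ is $L^r$. By the time-dependent It\^o formula, the process $u(X_r,r) - \int_s^r (\pt u + L^\sigma u)(X_\sigma,\sigma)\,d\sigma$ is a local martingale; stochastic completeness $P_t 1 = 1$ together with the $L^\infty$-bounds on $u$ and $v$ upgrade it to a genuine martingale. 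Taking conditional expectation given $X_s = x$ and differentiating in $r$ then yields the identity. Purely analytically one may differentiate $F(\tau_1,\tau_2) := P_{s,\tau_1}(u(\cdot,\tau_2))(x)$ separately in each slot, using the forward equation in $\tau_1$ and the linearity of $P_{s,\tau}$ in its second argument in $\tau_2$, and then apply the chain rule along the diagonal $\tau_1 = \tau_2 = \tau$.

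With the identity for $\phi'$ in hand, the pointwise hypothesis $L^\tau u + \pt u \geq v$ on $\M\times[0,T]$, combined with the positivity-preserving nature of $P_{s,\tau}$ (immediate from $P_t 1 = 1$ via the diffusive representation), gives $\phi'(\tau) \geq P_{s,\tau}(v(\cdot,\tau))(x)$. Integrating over $[s,T]$ and inserting the boundary values of $\phi$ closes the argument.

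I expect the main technical hurdle to be justifying the interchange of $\pt$ with the propagator, equivalently promoting the It\^o local martingale to a true martingale. This is precisely the role of the two uniform $L^\infty$-boundedness hypotheses on $u(\cdot,t)$ and $v(\cdot,t)$ together with the stochastic completeness assumption; without them one would obtain only a local-martingale version. A minor additional subtlety is that $\pt u + L^\tau u$ is a priori bounded only from below (by $v$, hence by $-\|v\|_\infty$) and not from above, so the upper tail must be controlled either by a one-sided Fubini/Tonelli argument or by smooth compactly supported approximation of $u$ followed by passage to the limit via dominated convergence against the finite measure $p(s,x,\tau,\cdot)\,d\tau$.
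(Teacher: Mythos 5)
Your main probabilistic argument is correct and is essentially the paper's own proof, which simply invokes the corresponding argument from Baudoin--Garofalo (Proposition~4.5) with the conditional expectation $\mathbb E[\,\cdot\,|\mathcal F_s]$ in place of the unconditional one: construct the time-inhomogeneous diffusion whose propagator is $P_{s,t}$, apply the time-dependent It\^o formula to $u(X_r,r)$, use stochastic completeness together with the $L^\infty$-bounds to upgrade the local martingale to a genuine martingale, and integrate the resulting differential inequality for $\phi(\tau)=P_{s,\tau}(u(\cdot,\tau))(x)$. Your reading of $P_{\tau,s}$ as $P_{s,\tau}$ and of $\{0,T\}$ as $[0,T]$ is also the only sensible one.

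One caution about the ``purely analytic'' alternative you sketch. Differentiating $F(\tau_1,\tau_2)=P_{s,\tau_1}(u(\cdot,\tau_2))(x)$ in the first slot using the relation $\partial_t P_{s,t}f=L^tP_{s,t}f$ recorded just before the proposition yields $\partial_{\tau_1}F = L^{\tau_1}\bigl[P_{s,\tau_1}u(\cdot,\tau_2)\bigr]$, hence $\phi'(\tau)=L^\tau\phi(\tau)+P_{s,\tau}\partial_\tau u$, which is \emph{not} manifestly equal to $P_{s,\tau}\bigl[(L^\tau u+\partial_\tau u)(\cdot,\tau)\bigr]$: in the time-dependent setting $L^\tau$ does not commute with $P_{s,\tau}$. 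The chain-rule shortcut goes through only with the Kolmogorov-forward form $\partial_t P_{s,t}f=P_{s,t}L^tf$ appropriate to the probabilistic propagator, which is the identity your It\^o computation secretly delivers. So the probabilistic derivation should be regarded as the real proof and the analytic remark as heuristic unless you address this commutativity point.
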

\begin{proof}
At each time $t$, the transverse Ricci flow preserves the totally geodesic foliation structure, so $\M$ is always stochastic complete, we can construct $(X_t^x)$ be the diffusion Markov process with semigroup $P_{s,t}$ and started at $(x,s)$. The rest of the proof follows the same as \cite{BaudoinGarofalo14}, except that we use $\mathbb E(\cdot|\mathcal F_s)$.
\end{proof}
We now introduce the following two functions. For $f\in \mathcal C_b^{\infty}(\M)$, we denote
\[
\Phi_1(x,t)=(P_{s,T-t}f)(x)\Gamma^t(\ln P_{s,T-t}f)(x)
\]
\[
\Phi_2(x,t)=(P_{s,T-t}f)(x)\Gamma^{\cV}(\ln P_{s,T-t}f)(x)
\]
By direct computations similar to \cite{BaudoinGarofalo14}[Lemma 5.1], we have
\begin{lem}
\begin{align*}
L^t\Phi_1+\pt\Phi_1&=2P_{s,T-t}\Gamma_2(\ln P_{s,T-t}f)+P_{s,T-t}(\pt g_{\cH})(\nabla_{\cH}\ln P_{s,T-t}f, \nabla_{\cH}\ln P_{s,T-t}f)\\
L^t\Phi_2+\pt\Phi_2&=2(P_{s,T-t}f)\Gamma^{\cV}_2(\ln P_{s,T-t} f)
\end{align*}
\end{lem}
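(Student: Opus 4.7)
The plan is to prove both identities by direct product-rule expansion, carefully tracking three independent sources of $t$-dependence in $\Phi_i$: the prefactor $u := P_{s,T-t}f$, the logarithm $v := \ln u$ appearing in the carré du champ, and (for $\Phi_1$ only) the evolving horizontal metric $g_{\cH}(t)$ inside $\Gamma^t$. The key inputs from earlier in the paper are the heat equation $\partial_t P_{s,t}f = L^t P_{s,t}f$ (which after the substitution $\tau = T-t$ gives $\partial_t u = -L^t u$), the logarithmic identity $L^t u / u = L^t v + \Gamma^t(v)$, and the commutation $\Gamma^t(v, \Gamma^{\cV}(v)) = \Gamma^{\cV}(v, \Gamma^t(v))$ recorded as the second clause of Theorem \ref{generalized CD inequality}.

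I would first compute $\partial_t \Phi_1 = \partial_t(u\,\Gamma^t(v))$ by the product rule, producing three pieces: a factor coming from $\partial_t u$, an interior term $2u\,\Gamma^t(v,\partial_t v)$, and a metric-evolution contribution of the form $u\cdot(\partial_t g_{\cH})(\nabla_{\cH} v, \nabla_{\cH} v)$ arising from differentiating the horizontal inner product inside $\Gamma^t$. Substituting $\partial_t u = -L^t u$ and $\partial_t v = -L^t u/u$, then using the logarithmic identity to rewrite $\Gamma^t(v, L^t u/u) = \Gamma^t(v, L^t v) + \Gamma^t(v, \Gamma^t(v))$, yields a closed expression for $\partial_t \Phi_1$.

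Next I would expand $L^t \Phi_1$ via the Leibniz rule $L^t(\phi\psi) = \phi L^t \psi + \psi L^t \phi + 2\Gamma^t(\phi,\psi)$ with $\phi = u$, $\psi = \Gamma^t(v)$. The algebraic miracle that makes the argument collapse is $\Gamma^t(u,\cdot) = u\,\Gamma^t(v,\cdot)$, valid because $du = u\,dv$ when $v = \ln u$; this converts the cross term into $2u\,\Gamma^t(v,\Gamma^t(v))$. Summing the two expressions, the contributions $\Gamma^t(v)\cdot L^t u$ and the two copies of $2u\,\Gamma^t(v,\Gamma^t(v))$ cancel in pairs, leaving $u\,[L^t \Gamma^t(v) - 2\Gamma^t(v, L^t v)]$, which equals $2u\,\Gamma^t_2(v)$ by the definition \eqref{Gamma 2}, plus the metric-evolution correction term. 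That is exactly the claimed first identity.

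For $\Phi_2$ the same computation applies with $\Gamma^{\cV}$ replacing the inner $\Gamma^t$: Leibniz in $L^t$, the log-gradient identity $\Gamma^t(u,\cdot) = u\,\Gamma^t(v,\cdot)$ to handle the cross term, and the commutation $\Gamma^t(v, \Gamma^{\cV}(v)) = \Gamma^{\cV}(v, \Gamma^t(v))$ from Theorem \ref{generalized CD inequality} to collapse the mixed pieces into the form of $\Gamma^{\cV}_2(v)$. Crucially, the transverse Ricci flow enforces $\partial_t g_{\cV} = 0$, so no metric-evolution correction appears, giving $L^t\Phi_2 + \partial_t\Phi_2 = 2u\,\Gamma^{\cV}_2(v)$. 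The main obstacle is purely organizational: cleanly disentangling the three sources of time-dependence and confirming that the only residue of the flow equation in the final answer is the single horizontal correction term. Once the logarithmic identity and Leibniz rule are on the table, the cancellations are forced by algebra and the result falls out.
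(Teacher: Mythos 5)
Your proposal is essentially the paper's argument: the paper's own proof consists only of the words ``by direct computations similar to [BG, Lemma 5.1],'' and you have correctly reconstructed those computations, identifying the three ingredients that make them work — the Leibniz rule for $L^t$, the log-gradient identity $\Gamma^t(u,\cdot)=u\,\Gamma^t(\ln u,\cdot)$, and (for $\Phi_2$) the vanishing of $\Gamma^t(v,\Gamma^{\cV}(v))-\Gamma^{\cV}(v,\Gamma^t(v))$ from Theorem~\ref{generalized CD inequality}.

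One point deserves more care than you give it: the sign of the metric-evolution term. If one reads the $\Gamma^t$ inside $\Phi_1$ literally as the carr\'e du champ of the metric at parameter $t$, then differentiating $g_{\cH}^{-1}(t)$ produces $-(\partial_t g_{\cH})(\nabla_{\cH} v,\nabla_{\cH} v)$, with a \emph{minus} sign, which would contradict the stated lemma. The $+$ sign in the lemma is recovered only by reading the operators in $\Phi_1$ and in the lemma consistently at the heat-flow time $T-t$ (so that $\partial_t u=-L^{T-t}u$ and $\partial_t \Gamma^{T-t}(\cdot)=-\partial_\tau\Gamma^\tau(\cdot)\big|_{\tau=T-t}$ flips the sign back), which is what both the cancellation with $L^{T-t}\Phi_1$ and the subsequent use of $\partial_t g_{\cH}=-2\ricci_{\cH}$ in the entropy-inequality proof implicitly assume. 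Your write-up blurs $L^t$ and $L^{T-t}$ in the step ``$\partial_t u=-L^t u$,'' which happens to land on the right answer but hides exactly the sign bookkeeping that matters when the flow term is later used to absorb the Ricci contribution in the Bochner inequality; an explicit reparametrization $\tau=T-t$ would make this airtight.
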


Now we are ready to prove the entropy inequality  Theorem \ref{entropy inequality}.
\begin{proof}{Proof of theorem \ref{entropy inequality}}. Denote $\Phi(t)$ as 
\[
\Phi(t)=a(t)\Phi_1(t)+b(t)\Phi_2(t)
\]
According to the previous lemma, we have for $f\in \mathcal C_b^{\infty}(\M)$ and $\var>0$, $f_{\var}=f+\var$, use $f_{\var}$ instead of $f$ in the previous lemma and take limit afterwards which is similar to the static case \cite{BaudoinGarofalo14}
\begin{align*}
L^t\Phi+\pt\Phi&=a'(P_{s,T-t})\Gamma^t(\ln P_{s,T-t}f_{\var})+b'(P_{s,T-t}f_{\var})\Gamma^{\cV}(\ln P_{s,T-t}f_{\var})+\\
&+2aP_{s,T-t}f_{\var}\Gamma_2(\ln P_{s,T-t}f_{\var})+aP_{s,T-t}f_{\var}(\pt g_{\cH})(\nabla_{\cH}\ln P_{s,T-t}f_{\var}, \nabla_{\cH}\ln P_{s,T-t}f_{\var})\\
&+2b(P_{s,T-t}f_{\var})\Gamma^{\cV}_2(\ln P_{s,T-t}f_{\var})
\end{align*}
according to Bochner's inequality \eqref{Bochner's inequality} and transverse Ricci flow $\pt g_{\cH}=-2\mathfrak{Ric}_{\cH}$
\begin{align*}
&2aP_{s,T-t}\Gamma_2(\ln P_{s,T-t}f_{\var})+2b(P_{s,T-t}f_{\var})\Gamma^{\cV}_2(\ln P_{s,T-t}f_{\var})\\
&+aP_{s,T-t}f_{\var}(\pt g_{\cH})(\nabla_{\cH}\ln P_{s,T-t}f_{\var}, \nabla_{\cH}\ln P_{s,T-t}f_{\var})\\
&\geq 2aP_{s,T-t}f(\frac{1}{d}(L^t\ln P_{s,T-t}f_{\var})^2+(-\kappa\frac{a}{b})\Gamma^t(\ln P_{s,T-s}f_{\var})+\rho_2\Gamma^{\cV}(\ln P_{s,T-t}f_{\var}))
\end{align*}
Plug the above inequality into $L^t\Phi+\pt\Phi$, we thus have
\begin{align*}
L^t\Phi+\pt\Phi &\geq(a'-2\kappa\frac{a^2}{b}-\frac{4a\gamma}{d})(P_{s,T-t}f_{\var})\Gamma^t(\ln P_{s,T-t}f_{\var})\\
&+(b'+2\rho_2a)(P_{s,T-t}f_{\var})\Gamma^{\cV}(\ln P_{s,T-t}f_{\var})\\
&+\frac{4a\gamma}{d}L^tP_{s,T-t}f_{\var}-\frac{2a\gamma^2}{d}P_{s,T-t}f_{\var}
\end{align*}
now we can apply proposition \ref{time parabolic com} to finish the proof.
\end{proof}
With the above inequality in hand, we are now ready to prove the following Li-Yau type gradient estimates Theorem \ref{gradient estimate}.

\begin{proof}{Proof of theorem \ref{gradient estimate}}.
First choose $\mathcal C^1$ function $a$, such that $b'+2\rho_2a=0$, and with our choice of function of $b$ and $\gamma$, plugging into theorem \eqref{entropy inequality}, we get the desired result.
\end{proof}

 \section{Li-Yau gradient estimates for positive solution of heat equation }

In this section, instead of using the semigroup representation $P_{s,t}f$ for the solution of the heat equation, we directly denote $u(x,t)$ as the solution to the heat equation at time $t$ associated with the time dependent horizontal Laplacian, namely we consider
\[
(\pt-L^t)u(x,t)=0,\quad x\in\M, t\in[0,T].
\]
In this setting, it is more clear and convenient to think that the metric evolves in time and the heat spreads over the manifold at the same time. For more details, we refer to \cite{BCP10}. We are going to get a time dependent version of gradient estimates for $u(x,t).$ We will use similar methods as in \cite{Zhang06, BCP10}. So we recall the following lemma first.
 \begin{lem}[lemma 2.1 \cite{BCP10}.]\label{Lemma Psi}
 Given $\tau\in (0,T]$, there exists a smooth function $\bar{ \Psi}:[0,\infty)\times {0,T}\rightarrow \R$ satisfying the following requirements:
 \begin{itemize}
 \item 1. The support of $\bar{\psi}(r,t)$ is a subset of $[0,\rho]\times[0,T]$, and $0\leq \bar\Psi(r,t)\leq 1$ in $[0,\rho]\times[0,T]$.
 \item 2. The equalities $\bar\Psi(r,t)=1$ and $\frac{\partial \bar\Psi}{\partial r}=0$ hold in $[0,\frac{\rho}{2}]\times[\tau,T]$ and $[0,\frac{\rho}{2}]\times[0,T]$ respectively.
 \item 3. The estimate $|\frac{\partial \bar\Psi}{\partial t}|\leq \frac{\bar C\bar\Psi^{\frac{1}{2}}}{\tau}$ is satisfied on $[0,\infty)\times [0,T]$ for some constants $\bar C>0$, and $\bar \Psi(r,0)=0$ for all $r\in [0,\infty).$
 \item 4. The inequalities $-\frac{C_l\bar\Psi^{l}}{\rho}\leq \frac{\partial \bar\Psi}{\partial r}\leq 0$ and $|\frac{\partial^2 \bar\Psi}{\partial r^2}|\leq \frac{C_l\bar\Psi^l}{\rho^2}$ hold on $[0,\infty)\times [0,T]$ for every $l\in(0,1)$ with some constant $C_l$ dependent on $l$.
 \end{itemize}
 \end{lem}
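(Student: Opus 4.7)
The plan is to construct $\bar\Psi(r,t)$ as a product of two carefully chosen one-variable cutoffs, $\bar\Psi(r,t)=\eta(r)\phi(t)$, and then verify the four stated properties. The spatial factor $\eta$ should be a smooth nonincreasing function equal to $1$ on $[0,\rho/2]$, identically zero on $[\rho,\infty)$, and valued in $[0,1]$; the temporal factor $\phi$ should be a smooth nondecreasing function with $\phi(0)=0$, equal to $1$ on $[\tau,T]$, valued in $[0,1]$. Once this product structure is in place, items (1) and (2) are immediate from the choices of supports and flat regions, the product vanishes on $\{t=0\}$ because $\phi(0)=0$, and the values lie in $[0,1]$ by construction. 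The bounds in (3) and (4) reduce respectively to estimates on $|\phi'|$ in terms of $\phi^{1/2}$, and on $|\eta'|,|\eta''|$ in terms of $\eta^l$.

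The central technical point is designing the cutoffs so that their derivatives are controlled by powers of the cutoffs themselves. For this I would use an \emph{infinitely flat} model function $\eta_0(s)=e^{-1/(1-s)}$ on $s\in[0,1)$ extended by zero, glued across $s=1/2$ to the constant $1$ by a standard smooth interpolation. Since $\eta_0$ vanishes to infinite order at $s=1$, for every $l\in(0,1)$ one has $\eta_0(s)^{-(1-l)}=e^{(1-l)/(1-s)}$, which absorbs all negative powers $(1-s)^{-k}$ appearing in $\eta_0'$ and $\eta_0''$; hence there exists $C_l>0$ with $|\eta_0'(s)|\leq C_l\,\eta_0(s)^l$ and $|\eta_0''(s)|\leq C_l\,\eta_0(s)^l$ uniformly. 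Pulling back under the rescaling $s=2r/\rho-1$ and absorbing constants into $C_l$ yields the $1/\rho$ and $1/\rho^2$ factors appearing in (4); monotonicity of $\eta$ provides the sign condition $\partial_r\bar\Psi\leq 0$.

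For the temporal factor, the model choice $\phi_0(s)=\sin^2(\pi s/2)$ on $[0,1]$, smoothly extended by $1$ on $[1,\infty)$, satisfies $\phi_0(0)=0$ and
\begin{equation*}
|\phi_0'(s)|=\pi\,|\sin(\pi s/2)\cos(\pi s/2)|\leq \pi\,\phi_0(s)^{1/2}.
\end{equation*}
Setting $\phi(t)=\phi_0(t/\tau)$ produces the bound $|\partial_t\bar\Psi|\leq \eta(r)|\phi'(t)|\leq \pi\,\bar\Psi^{1/2}/\tau$ on $\{\phi(t)\leq 1\}$, while on $[\tau,T]$ both $\phi'$ and hence $\partial_t\bar\Psi$ vanish; this gives (3) with $\bar C=\pi$ (after adjusting across the gluing region where $\phi$ is made $C^\infty$).

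The main obstacle is the simultaneous self-bounding inequalities $|\eta'|\leq C_l\eta^l/\rho$ and $|\eta''|\leq C_l\eta^l/\rho^2$ \emph{for every} $l\in(0,1)$, which rule out the naive use of polynomial mollifiers: any $\eta$ vanishing only to finite order $k$ at $r=\rho$ satisfies $|\eta'|\sim \eta^{(k-1)/k}$ with a fixed exponent, and so cannot meet the requirement for $l$ close to $1$. This forces the exponentially flat choice above and is where the bulk of the verification lies; with that choice the remaining bookkeeping for the constants $C_l$ and $\bar C$, and the smooth gluing across $r=\rho/2$ and $t=\tau$, is routine.
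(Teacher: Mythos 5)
Your construction is correct. The paper does not actually prove this lemma --- it quotes it verbatim as ``Lemma 2.1 of [BCP10]'' and that reference in turn defers the construction of such cutoffs to earlier work (ultimately Li--Yau and Q.\ Zhang). So there is no proof in the paper to compare against; you are supplying what the paper leaves as a black box, and what you supply checks out.

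A few remarks to confirm the verification. The separable ansatz $\bar\Psi=\eta(r)\phi(t)$ makes items (1) and (2) automatic. For item (3): since $0\le\eta\le 1$ one has $\eta\le\eta^{1/2}$, hence $|\partial_t\bar\Psi|=\eta|\phi'|\le\eta^{1/2}|\phi'|\le\bar C\,\eta^{1/2}\phi^{1/2}/\tau=\bar C\bar\Psi^{1/2}/\tau$, using $|\phi'|\le\bar C\phi^{1/2}/\tau$ for the rescaled $\sin^2$ profile; and $\bar\Psi(r,0)=\eta(r)\phi(0)=0$. For item (4): similarly $0\le\phi\le 1$ and $l\in(0,1)$ give $\phi\le\phi^l$, so $|\eta'\phi|\le|\eta'|\phi^l$ and $|\eta''\phi|\le|\eta''|\phi^l$, which reduces everything to the one-variable self-bounding estimates $|\eta'|\le C_l\eta^l/\rho$ and $|\eta''|\le C_l\eta^l/\rho^2$; the sign $\partial_r\bar\Psi\le 0$ follows from $\eta'\le 0$. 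Your computation that $e^{-1/(1-s)}$ satisfies $|\eta_0^{(k)}(s)|\le C_{k,l}\,\eta_0(s)^l$ near $s=1$ because the exponential $e^{-(1-l)/(1-s)}$ dominates any inverse power of $(1-s)$ is correct, and your observation that finite-order polynomial vanishing cannot deliver the estimate \emph{for every} $l\in(0,1)$ is exactly the point of the exponentially flat choice. The only thing left informal is the explicit smooth gluing near $r=\rho/2$ and $t=\tau$; there the cutoffs are bounded away from zero, so the self-bounding inequalities reduce to plain boundedness of the first two derivatives and any monotone $C^\infty$ interpolation works, as you say. This is a clean, self-contained proof of the quoted lemma.
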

 Before we prove our main theorem, we prove the following lemma which will play an important role later, this can be viewed as an analogue of \cite{BCP10}[Lemma 2.6].
 \begin{lem}\label{Lemma estimate F}
 Suppose $(\M,g(x,t))_{t\in[0,T]}$ is a complete solution to the transverse Ricci flow \eqref{transverse RF}. Given assumption \eqref{assumption 2}, for any horizontal one form $\eta_1$ and vertical one form $\eta_2$. Under our assumption \eqref{assumption 2}
 \begin{align}
 &-k_1g_{\cH}(x,t)\leq \ricci_{\cH}(x,t)\leq k_2g_{\cH}(x,t), \quad -<\bf J^2\eta_1,\eta_1>\leq \kappa \|\eta_1\|_{\cH}^2\nonumber\\ \nonumber
&-2<\delta_{\cH}T(\eta_2),\eta_2>_{\cV}\geq -2\beta\|\eta_2\|_{\cV}^2,\quad -\frac{1}{4}\bf{Tr}_{\cH}(\bf J^2_{\eta_2})\geq \rho_2\|\eta_2\|_{\cV}^2 \nonumber
 \end{align}
 for some $k_1, k_2, \rho_2,\beta,\kappa>0$. Suppose that $u:\M\times [0,T]\rightarrow \R$ is a smooth positive function satisfying the heat equation \eqref{heat equation}
 \[
 (L^t-\pt)u(x,t)=0,\quad x\in\M, t\in[0,T].
 \]
 For some positive function $b(t)$ and some function $\alpha(t),\phi(t)$, define $f=\log u$ and 
 \begin{align}
 F=\Gamma^t(f)+b(t)\Gamma^{\cV}(f)-\alpha(t)f_t-\phi(t).
 \end{align}
 the estimate
 \begin{align*}
 (L^t-\pt)F+2\Gamma^t(f,F)&\geq(-2\alpha(t)k_1-\frac{\kappa}{\var}-\frac{4a\alpha(t)\eta(t)}{n})\Gamma^t(f)\\
 &+(2\rho_2-2\beta-b'(t))\Gamma^{\cV}(f)+(\alpha'(t)+\frac{4a\alpha(t)\eta(t)}{n})f_t\\
 &+\phi'(t)-\frac{\alpha(t)n}{2c}\max\{k_1^2,k_2^2\}
 \end{align*} 
 holds for any $a,c>0$ such that $a+c=\frac{1}{\alpha(t)}$.
 \end{lem}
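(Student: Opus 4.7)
The approach is a standard Bochner-type computation for $(L^t-\partial_t)F$, adapted to the time-dependent horizontal Laplacian under transverse Ricci flow. I would treat each of the three summands $\Gamma^t(f)$, $b(t)\Gamma^{\mathcal V}(f)$, and $-\alpha(t)f_t-\phi(t)$ separately, use Theorem \ref{Bochner} and Theorem \ref{generalized CD inequality} together with the two-sided bounds in \eqref{assumption 2}, and then regroup so that the quantity $2\Gamma^t(f,F)$ cancels the unwanted first-order derivatives of $L^t f$.

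First, I would compute $(L^t-\partial_t)\Gamma^t(f)$. For the spatial part, the definition of $\Gamma^t_2$ gives $L^t\Gamma^t(f)=2\Gamma^t_2(f)+2\Gamma^t(L^tf,f)$, and Theorem \ref{Bochner} applied to $\eta=df$ yields a lower bound on $2\Gamma^t_2(f)$ involving $\tfrac{2}{n}(L^tf)^2$, the horizontal Ricci, $\delta_{\mathcal H}T$, and the $\mathbf J^2$ terms. The time derivative picks up $(\partial_t g_{\mathcal H})(\nabla f,\nabla f)+2\Gamma^t(f,f_t)=-2\mathfrak{Ric}_{\mathcal H}(\nabla f,\nabla f)+2\Gamma^t(f,f_t)$, so the contribution of $-2\mathfrak{Ric}_{\mathcal H}$ from the flow partially cancels the $+2\mathfrak{Ric}_{\mathcal H}$ from Bochner. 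Similarly for $(L^t-\partial_t)\Gamma^{\mathcal V}(f)$: since $g_{\mathcal V}$ is frozen, only $2\Gamma^{\mathcal V}_2(f)+2\Gamma^{\mathcal V}(L^tf,f)-2\Gamma^{\mathcal V}(f,f_t)$ appears, and the vertical CD estimate from Theorem \ref{generalized CD inequality} supplies the lower bound, producing the coefficient $2\rho_2$ in front of $\Gamma^{\mathcal V}(f)$ and, via the assumption $-2\langle\delta_{\mathcal H}T(\eta_2),\eta_2\rangle_{\mathcal V}\geq -2\beta\|\eta_2\|_{\mathcal V}^2$, the correction $-2\beta\Gamma^{\mathcal V}(f)$.

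For the term $-\alpha(t)f_t-\phi(t)$, I differentiate the heat equation to get $\partial_t f_t=\partial_t(L^tf+\Gamma^t(f))$ and use that $\partial_t L^t - L^t\partial_t$ is a first-order operator encoding the metric evolution (whose coefficients are controlled by $\mathfrak{Ric}_{\mathcal H}$ and thus by $k_1,k_2$). This gives $(L^t-\partial_t)(-\alpha f_t)=\alpha' f_t+\text{(commutator terms)}$, the commutator terms again involving $\mathfrak{Ric}_{\mathcal H}(\nabla f,\nabla f)$. Adding $2\Gamma^t(f,F)=2\Gamma^t(f,\Gamma^t(f))+2b\Gamma^t(f,\Gamma^{\mathcal V}(f))-2\alpha\Gamma^t(f,f_t)$ precisely cancels the $\Gamma^t(L^tf,f)$ terms (using $L^tf=f_t-\Gamma^t(f)$) and the $\Gamma^t(f,f_t)$ created in Step 1. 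The cross-commutation $\Gamma^t(f,\Gamma^{\mathcal V}(f))=\Gamma^{\mathcal V}(f,\Gamma^t(f))$ from Theorem \ref{generalized CD inequality} is what makes the $b(t)\Gamma^{\mathcal V}$ bookkeeping go through.

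At this point the inequality reads, schematically,
\begin{align*}
(L^t-\partial_t)F+2\Gamma^t(f,F)&\geq \tfrac{2}{n}(L^tf)^2+(\text{coeff})\Gamma^t(f)+(2\rho_2-2\beta-b')\Gamma^{\mathcal V}(f)\\
&\quad+\alpha' f_t+\phi'+(\text{residual }\mathfrak{Ric}_{\mathcal H})(\nabla f,\nabla f),
\end{align*}
where the residual horizontal Ricci term is bounded below by $-2\alpha k_1\Gamma^t(f)$. I would then substitute $L^tf=f_t-\Gamma^t(f)$, expand $(L^tf)^2=(f_t-\Gamma^t(f))^2$, and apply Young's inequality $2xy\leq 2ax^2+\tfrac{y^2}{2a}$ to the cross term $-\tfrac{4}{n}f_t\,\Gamma^t(f)$ using the free parameter $a$ (with $a+c=1/\alpha$), producing the coefficient $-\tfrac{4a\alpha\eta}{n}$ on $\Gamma^t(f)$ and the matching $+\tfrac{4a\alpha\eta}{n}$ on $f_t$; the leftover $\tfrac{2}{n\alpha c}$ portion of $(f_t)^2$ is absorbed against the residual Ricci pieces using the two-sided bound $\max\{k_1^2,k_2^2\}$, which is where the $-\tfrac{\alpha n}{2c}\max\{k_1^2,k_2^2\}$ term comes from.

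The main obstacle will be the correct bookkeeping of the metric-evolution contributions in Step 3: specifically, since $L^t$ itself depends on $t$ through $g_{\mathcal H}(t)$, the commutator $[\partial_t,L^t]$ acting on $f$ produces first-order terms in $\nabla f$ coupled to $\mathfrak{Ric}_{\mathcal H}$, and these must be carefully combined with the Bochner contribution from Step 1 so that only a one-sided residue (controlled by $k_1$ alone on the $\Gamma^t(f)$ side, and by $\max\{k_1^2,k_2^2\}$ in the quadratic absorption) survives. Everything else is an application of the time-dependent generalized curvature dimension inequality in the form already proved.
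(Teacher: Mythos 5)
Your overall strategy (direct Bochner computation, exploit the cancellation between $\partial_t g_{\mathcal H}=-2\mathfrak{Ric}_{\mathcal H}$ and the $+\mathfrak{Ric}_{\mathcal H}$ term in the Weitzenb\"ock formula, differentiate the heat equation to control $L^t f_t$, and add $2\Gamma^t(f,F)$) does match the paper's opening moves, and you correctly identify the cross-commutation $\Gamma^t(f,\Gamma^{\mathcal V}(f))=\Gamma^{\mathcal V}(f,\Gamma^t(f))$ as what makes the $b(t)\Gamma^{\mathcal V}$ bookkeeping work. However there are two genuine gaps in the last two steps, both stemming from a misidentification of what the commutator $[\partial_t,L^t]$ produces.

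First, under transverse Ricci flow the commutator does \emph{not} give a first-order term of the form $\mathfrak{Ric}_{\mathcal H}(\nabla f,\nabla f)$; the first-order contribution (from $\partial_t\Gamma^k_{ij}$) vanishes by contracted second Bianchi, and what survives is the second-order coupling $\partial_t(L^t f)=L^t f_t+2R_{ij}f_{ij}$, i.e.\ Ricci contracted against the \emph{Hessian} of $f$. This term is of size $|\mathrm{Ric}||\nabla^2 f|$ and cannot be bounded by $-2\alpha k_1\Gamma^t(f)$ or by the CD inequality alone. The paper handles it by \emph{not} downgrading to $\tfrac{1}{n}(L^t f)^2$ at this stage: it keeps the full Hessian square $2\|\nabla^{\sharp}_{\mathcal H}df\|^2_{\mathcal H}$ from Theorem \ref{Bochner} and splits it using $a+c=\tfrac{1}{\alpha}$ as $2(a\alpha+c\alpha)\sum f_{ij}^2$; the $c\alpha$ part completes a square with $\alpha R_{ij}f_{ij}$, producing the residue $-\tfrac{\alpha}{4c}\sum R_{ij}^2\geq-\tfrac{\alpha n}{4c}\max\{k_1^2,k_2^2\}$, and the $a\alpha$ part is only then estimated by $\tfrac{a\alpha}{n}(L^tf)^2$. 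So the parameters $a,c$ split the Hessian, not a product $f_t\,\Gamma^t(f)$.

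Second, your plan to expand $(L^tf)^2=(f_t-\Gamma^t(f))^2$ and apply Young's inequality to the cross term $f_t\Gamma^t(f)$, absorbing ``the leftover $(f_t)^2$ against the residual Ricci pieces using $\max\{k_1^2,k_2^2\}$,'' cannot work: $(f_t)^2$ is an unbounded analytic quantity that has nothing to do with the curvature bounds, and the $\max\{k_1^2,k_2^2\}$ term in the statement comes entirely from $|\mathrm{Ric}_{\mathcal H}|^2$ as above. What actually produces the $\pm\tfrac{4a\alpha\eta}{n}$ coefficients in the statement is the elementary inequality $(L^tf)^2\geq 2\eta(t)L^tf-\eta(t)^2$ applied to the surviving $\tfrac{a\alpha}{n}(L^tf)^2$, followed by the linear substitution $L^tf=f_t-\Gamma^t(f)$; no $(f_t)^2$ ever appears. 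To repair the proof you should retain the Hessian term from Theorem \ref{Bochner}, insert the identity $\partial_t(L^tf)=L^tf_t+2R_{ij}f_{ij}$, complete the square on $(\sqrt{c\alpha}\,f_{ij}+\tfrac{\sqrt{\alpha}}{2\sqrt{c}}R_{ij})^2$, and only then pass to $(L^tf)^2$ and the linear lower bound in $\eta(t)$.
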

 \begin{rem}
 The result is also true for super transverse Ricci flow 
 \begin{align}\label{super transverse RF}
 \pt g_{\cH}\geq -2\ricci_{\cH}
 \end{align}
 the proof is almost the same, we skip this part for simplicity and just give necessary comments in the proof.
 \end{rem}
 \begin{proof}
 By direct computations, we have 
 \begin{align*}
 (L^t-\pt)F&=L^t\Gamma^t(f)+b(t)L^t\Gamma^{\cV}(f)-\alpha(t)L^tf_t\\
 &-2\Gamma^t(f,f_t)-2b(t)\Gamma^{\cV}(f,f_t)-b'(t)\Gamma^{\cV}(f)\\
 &+\alpha(t)'f_t+\alpha(t)f_{tt}+\pt g_{\cH}(\nabla f,\nabla f)-\phi'(t)
 \end{align*}
 due to the fact $L^tf=f_t-\Gamma^t(f)$, taking derivative on both sides gives
 \begin{align*}
L^t f_t+2R_{ij}f_{ij}=f_{tt}-2\Gamma^t(f,f_t)+\pt g_{\cH}(\nabla f,\nabla f)
 \end{align*}
Notice that super transverse Ricci flow \eqref{super transverse RF} gives the above relation $"\leq"$ instead of $"="$.

 Combining with relation \eqref{Gamma 2}, assumption \eqref{transverse RF} (or \eqref{super transverse RF}) and Bochner's inequality \eqref{Bochner's inequality} we further get 
\begin{align*}
 (L^t-\pt)F&=2\Gamma^t_2(f)+2b(t)\Gamma^{\cV}_2(f)-2\Gamma^t(f,F)+\pt g_{\cH}( df, df)\\
 &+\alpha(t)(2R_{ij}f_{ij}-\pt g_{\cH}(\nabla f,\nabla f))-b'(t)\Gamma^{\cV}(f)+\alpha'(t)f_t+\phi'(t)\\
 &=-2\Gamma^t(f,F)+2\|\nabla_{\cH}df-\mathfrak T^{\var}_{\cH}df\|_{\var}-2<\delta_{\cH}T(df),df>_{\cV}+\frac{1}{\var}<\mathbf J^2(df),df>_{\cH}\\
 &+\alpha(t)(2R_{ij}f_{ij}-\pt g_{\cH}(\nabla f,\nabla f))-b'(t)\Gamma^{\cV}(f)+\alpha'(t)f_t+\phi'(t)\\
& \geq -2\Gamma^t(f,F)+ 2\|\nabla_{\cH}^{\sharp}df\|_{\cH}-\frac{1}{2}\mathbf{Tr}_{\cH}(\mathbf J^2_{df})-2<\delta_{\cH}T(df),df>_{\cV}+\frac{1}{\var}<\mathbf J^2(df),df>_{\cH}\\
 &+\alpha(t)(2R_{ij}f_{ij}-\pt g_{\cH}(\nabla f,\nabla f))-b'(t)\Gamma^{\cV}(f)+\alpha'(t)f_t+\phi'(t)
 \end{align*} 
 Our next step is to bound the R.H.S of the last inequality, choose $a,c$ such that $a+c=\frac{1}{\alpha(t)}$. Using the local coordinate representation (e.g. Prop $3.6$, \cite{BKW15}),we have 
 \begin{align*}
 \|\nabla_{\cH}^{\sharp}df\|_{\cH}+\alpha(t)R_{ij}f_{ij}&=\sum_{i,j=1}^n((a\alpha(t)+c\alpha(t))f_{ij}^2+\alpha(t)R_{ij}f_{ij})\\
 &=\sum_{i,j=1}^n(a\alpha(t)f_{ij}^2+\alpha(t)(\sqrt{c}f_{ij}+\frac{R_{ij}}{2\sqrt{c}})^2-\frac{\alpha(t)}{4c}R_{ij}^2)\\
& \geq \frac{a\alpha(t)}{n}(L^tf)^2-\frac{\alpha(t)n}{4}\max\{k_1^2,k_2^2\}
 \end{align*}
 choose function $\eta(t)$ such that 
 \[
 (L^tf)^2\geq 2\eta(t)L^tf-\eta^2(t)
 \]
 taking into account assumption \eqref{assumption 2} and the above computations, we get the desired result.
  \begin{align*}
 (L^t-\pt)F+2\Gamma^t(f,F)&\geq(-2\alpha(t)k_1-\frac{\kappa}{\var}-\frac{4a\alpha(t)\eta(t)}{n})\Gamma^t(f)\\
 &+(2\rho_2-2\beta-b'(t))\Gamma^{\cV}(f)+(\alpha'(t)+\frac{4a\alpha(t)\eta(t)}{n})f_t\\
 &+\phi'(t)-\frac{\alpha(t)n}{2c}\max\{k_1^2,k_2^2\}
 \end{align*} 
  \end{proof}
  
  \begin{rem}\label{diff ine F}
  In the above lemma, if we choose
  \begin{align*}
  &-2\alpha(t)k_1-\frac{\kappa}{\var}-\frac{4a\alpha(t)\eta(t)}{n}=\frac{d'}{d}\\
  &2\rho_2-2\beta-b'(t)=\frac{d'}{d}b(t)\\
  &\alpha'(t)+\frac{4a\alpha(t)\eta(t)}{n}=-\frac{d'}{d}\alpha(t)\\
  &\phi'(t)-\frac{\alpha(t)n}{2c}\max\{k_1^2,k_2^2\}=-\frac{d'}{d}\phi(t)
  \end{align*}
  we can further get 
  \begin{align}\label{Qian version}
  (L^t-\pt)F+2\Gamma^t(f,F)\geq \frac{d'}{d}F
  \end{align}
  this ensures that we can assume $\Psi F>0$ in the proof of the following theorem, since otherwise $F<0$ will give us the result directly, see \cite{Qian13} for details.
  \end{rem}
 For a given $\mathcal C^1$ function $d(t):[0,\infty)\rightarrow [0,\infty)$ such that $d(0)=0,\quad \lim_{t\rightarrow 0}\frac{d(t)}{d'(t)}=0,\quad\frac{d'}{d}>0,\quad \frac{\int_0^td(s)ds}{d(t)}>0$ and some integrable conditions \cite{Qian13}, we can prove the Li-Yau type inequality Theorem \ref{diff harnack} for the positive solution for  heat equation associated with the time dependent horizontal Laplacian under transverse Ricci flow. This theorem is a generalization of \cite{Qian13}[Theorem 2.1] and \cite{BCP10}[Theorem 2.7].

 \begin{proof}{Proof of theorem \ref{diff harnack}}
 We follow the idea from Q. Zhang \cite{Zhang06} and X. Cao \cite{BCP10}, we still use the same notation as in the previous lemma for $f$ and $F$. Let us pick $\tau\in (0,T]$ and fix $\bar\Psi(x,t)$ satisfyting the conditions of Lemma \ref{Lemma Psi}. Define $\psi:\M\times{0,T}\rightarrow \R$ by setting 
 \begin{align}\label{Psi function}
 \Psi(x,t)=\bar\psi(dist(x,x_0,t),t)=\bar\psi(r_{\var}(x),t)
 \end{align}
 where we denote $r_{\var}(x)=dist(x,x_0,t)$ as the Riemannian distance from $x$ to a fixed point $x_0$ at time $t$ associate with the metric $g_{\var}(t)$. We will show our estimate at $(x,\tau)$ for $x\in\M$ such that $r_{\var}(x)<\frac{\rho}{2}.$ Our strategy is the same as in \cite{BCP10} to estimate $(\pt-L^t)(\Psi F)$ and analyze the result at a point where the function $\psi F$ attains its maximum. Lemma \ref{Lemma estimate F} and some direct computations imply
 \begin{align*}
 (L^t-\pt)(\Psi F)&\geq -2\Gamma^{t}(f,\Psi F) +2F\Gamma^t(f,\Psi)+(-2\alpha(t)k_1-\frac{\kappa}{\var}-\frac{4a\alpha(t)\eta(t)}{n})\Gamma^t(f)\Psi\\
 &+(2\rho_2-2\beta-b'(t))\Gamma^{\cV}(f)\Psi+(\alpha'(t)+\frac{4a\alpha(t)\eta(t)}{n})f_t\Psi\\
 &+(\phi'(t)-\frac{\alpha(t)n}{2c}\max\{k_1^2,k_2^2\})\Psi\\
& +2\frac{\Gamma^t(\Psi,\Psi F)}{\psi}-2\frac{\Gamma^t(\Psi)F}{\Psi}+F(L^t\Psi)-F(\pt \Psi)
 \end{align*}
 the inequality holds in the part of $B_{\rho,T}$ where $\Psi$ is smooth and strictly positive. Let $(x_1,t_1)$ be a maximum point for the fucntion $\Psi F$ in the set $\{(x,t)\in \M\times {0,\tau}| r_{\var}(x)\leq \rho\}.$ We may assume $(\Psi F)(x_1,t_1)>0$ according to Remark \ref{diff ine F}. Since $(x_1,t_1)$ is a maximum point, the formulas $L^t(\Psi F)(x_1,t_1)\leq 0,\quad \nabla (\Psi F)(x_1,t_1)=0$, and $(\psi F)_t\geq 0$ hold true. We thus get 
  \begin{align*}
0&\geq 2F\Gamma^t(f,\Psi)+(-2\alpha(t)k_1-\frac{\kappa}{\var}-\frac{4a\alpha(t)\eta(t)}{n})\Gamma^t(f)\Psi\\
 &+(2\rho_2-2\beta-b'(t))\Gamma^{\cV}(f)\Psi+(\alpha'(t)+\frac{4a\alpha(t)\eta(t)}{n})f_t\Psi\\
 &+(\phi'(t)-\frac{\alpha(t)n}{2c}\max\{k_1^2,k_2^2\})\Psi\\
&-2\frac{\Gamma^t(\Psi)F}{\Psi}+F(L^t\Psi)-F(\pt \Psi)
 \end{align*}
 according to Lemma \ref{Lemma Psi}
 \begin{align}\label{est: grad Psi}
 -\frac{|\nabla \psi|^2}{\psi}\geq -\frac{C_{1/2}^2}{\rho^2}
 \end{align}
since our transverse Ricci flow preserves the totally geodesic foliation structure, by the sub-laplacian comparison theorem \cite{BGKT17}, we have (Corollary 2.10 \cite{BGKT17})
\begin{align*}
L^tr_{\var}(x)\leq F(r_{\var},\gamma)=
\begin{cases}
\sqrt{n\kappa_{\var}}\cot(\sqrt{\frac{\kappa_{\var}}{n}}r_{\var}(x)), & \text{if} ~\kappa_{\var}>0,\\
\frac{n}{r_{\var}(x)},& \text{if}~ \kappa_{\var}=0,\\
\sqrt{n|\kappa_{\var}(\gamma)|}\coth(\sqrt{\frac{|\kappa_{\var}|}{n}}r_{\var}(x)),& \text{if} ~\kappa_{\var}<0.
\end{cases}
\end{align*}
where 
\[
\kappa_{\var}=\min\{\rho_1-\frac{\kappa}{\var},\frac{\rho_2}{\var}\}
\]
so we have the estimate
\begin{align}\label{est: L^t Psi}
L^t\psi\geq -\frac{C_{1/2}}{\rho^2}|\nabla_{\cH}r_{\var}|_{\cH}-\frac{C_{1/2}\psi^{1/2}}{\rho}F(r_{\var,\gamma})\geq -\frac{d_1}{\rho^2}-\frac{d_1\psi^{1/2}}{\rho}F(r_{\var,\gamma})
\end{align}
at the point $(x_1,t_1)$ with $d_1$ a positive constant depending on $n$. There existes $\bar C>0$ such that the inequality 
\begin{align}\label{est: pt Psi}
-\frac{\partial \Psi}{\partial t}\geq -\frac{\bar C\Psi^{1/2}}{\tau}-C_{1/2}\bar k\Psi^{1/2}\quad \bar k=\max\{k_1,k_2\}
\end{align}
this is the same as $(2.6),(2.7),(2.8)$ in \cite{BCP10}. Combining with \eqref{est: grad Psi}, \eqref{est: L^t Psi} and \eqref{est: pt Psi}, we get
\begin{align*}
0&\geq -2F|\nabla f||\nabla \Psi|+(-2\alpha(t)k_1-\frac{\kappa}{\var}-\frac{4a\alpha(t)\eta(t)}{n})\Gamma^t(f)\Psi\\
 &+(2\rho_2-2\beta-b'(t))\Gamma^{\cV}(f)\Psi+(\alpha'(t)+\frac{4a\alpha(t)\eta(t)}{n})f_t\Psi\\
 &+(\phi'(t)-\frac{\alpha(t)n}{2c}\max\{k_1^2,k_2^2\})\Psi\\
&+d_2(-\frac{1}{\rho^2}-\frac{\psi^{1/2}}{\rho}F(r_{\var},\gamma)-\frac{\Psi^{1/2}}{\tau}-\bar k\Psi^{1/2})F
 \end{align*}
where we denote $d_2=\max\{3d_1,C_{1/2},3C_{1/2}^2,\bar C\}$. Multiply $t\Psi$, use the estimate for $|\nabla \Psi|$ and the fact $2xy\leq x^2 c+\frac{y^2}{c}$, taking $c=\Psi$ we get
\begin{align*}
0&\geq -\frac{t_1d_2}{\rho\Psi}(F\Psi)^2+(-2\alpha(t)k_1-\frac{\kappa}{\var}-\frac{4a\alpha(t)\eta(t)}{n}-\frac{t_1d_2}{\rho})\Gamma^t(f)\Psi^2\\
 &+(2\rho_2-2\beta-b'(t))\Gamma^{\cV}(f)\Psi^2+(\alpha'(t)+\frac{4a\alpha(t)\eta(t)}{n})f_t\Psi^2\\
 &+(\phi'(t)-\frac{\alpha(t)n}{2c}\max\{k_1^2,k_2^2\})\Psi^2\\
&+d_2(-\frac{1}{\rho^2}-\frac{\psi^{1/2}}{\rho}F(r_{\var},\gamma)-\frac{\Psi^{1/2}}{\tau}-\bar k\Psi^{1/2})F\Psi
 \end{align*}
 Now we choose funciton $d(t)$ such that 
 \begin{align*}
  &-2\alpha(t)k_1-\frac{\kappa}{\var}-\frac{4a\alpha(t)\eta(t)}{n}-\frac{t_1d_2}{\rho}=\frac{d'}{d}\\
  &2\rho_2-2\beta-b'(t)=\frac{d'}{d}b(t)\\
  &\alpha'(t)+\frac{4a\alpha(t)\eta(t)}{n}=-\frac{d'}{d}\alpha(t)\\
  &\phi'(t)-\frac{\alpha(t)n}{2c}\max\{k_1^2,k_2^2\}=-\frac{d'}{d}\phi(t)
  \end{align*}
  we can get 
   \[
 b(t)=\frac{2(\rho_2-\beta)\int_0^td(s)ds}{d(t)},\quad \alpha(t)=\frac{e^{2k_1t}\int_0^te^{-2k_1s}d(s)(\frac{d'(s)}{d(s)}+\frac{\kappa}{\var}+\frac{t_1d_2}{\rho})ds}{d(t)}
 \]
 \[
 \phi(t)=\frac{\alpha(t)n\max\{k_1^2,k_2^2\}\int_0^td(s)ds}{2cd(t)},\quad \eta(t)=-\frac{nk_1}{2a}-\frac{n}{4a\alpha(t)}(\frac{d'(t)}{d(t)}+\frac{\kappa}{\var}+\frac{t_1d_2}{\rho}) )
 \]
which implies 
\begin{align*}
0&\geq -\frac{t_1d_2}{\rho\Psi}(F\Psi)^2-\frac{d'}{d}F\Psi^2
&+d_2(-\frac{1}{\rho^2}-\frac{\psi^{1/2}}{\rho}F(r_{\var},\gamma)-\frac{\Psi^{1/2}}{\tau}-\bar k\Psi^{1/2})F\Psi
 \end{align*}
 by completing on the squares, 
 we get 
 \begin{align}\label{est: Psi F}
 \Psi F\leq \frac{1}{4}(\frac{1}{\rho}+F(r_{\var},\gamma)+\frac{\rho}{\tau}+\rho \bar k-{\frac{\rho d'\Psi(t_1)}{d_2t_1 d}})
 \end{align}
 we denote 
 \[
 B=\frac{1}{4}(\frac{1}{\rho}+F(r_{\var},\gamma)+\frac{\rho}{\tau}+\rho \bar k-{ \frac{\rho d'\Psi(t_1)}{d_2t_1 d}})
 \]
 recall that $\Psi(x,\tau)=1$ whenever $r_{\var}(x)<\frac{\rho}{2}$ and $(x_1,t_1)$ is a maximum point for $\Psi F$ int the set  $\{(x,t)\in \M\times (0,\tau]| r_{\var}(x)\leq \rho\}.$ Hence 
 \begin{align*}
 F(x,\tau)&=(\Psi F)(x,\tau)\leq (\Psi F)(x_1,t_1)\\
 &\leq \frac{1}{4}(\frac{1}{\rho}+F(r_{\var,\gamma})+\frac{\rho}{\tau}+\rho \bar k-{\frac{\rho d'(t_1)\Psi(t_1)}{d_2t_1 d(t_1)}})
 \end{align*}
 Recall that 
  \begin{align*}
 F=\Gamma^t(f)+b(t)\Gamma^{\cV}(f)-\alpha(t)f_t-\phi(t).
 \end{align*}
 \[
 F(x,\tau)\leq B
 \]
 gives us the desired result.
 \end{proof}
 \subsection{Parabolic Harnack inequality and heat kernel upper bound.}

 Before we prove the theorem \ref{parabolic harnack}, let's estimate $\alpha(t)$ first.
 \begin{rem}\label{alpha t estimate}
  Recall that, we have $C=(2k_1+\frac{\kappa}{\var}+\frac{t_1d_2}{\rho})$,
  \[
 \alpha(t)=C[ \frac{e^{2k_1t}}{4k_1^3t^2}-(\frac{1}{2k_1}+\frac{1}{2k_1^2t}+\frac{1}{4k_1^3t^2})]+1\leq C\frac{e^{2k_1t}}{4k_1^3t^2}
 \]
 since $1-(2k_1+\frac{\kappa}{\var}+\frac{t_1d_2}{\rho})(\frac{1}{k_1}+\frac{1}{2k_1^2t}+\frac{1}{64k_1^3t^2})$ is monotone increasing on $(0,T]$, but its value is negative.
 Note that $\frac{e^{2k_1t}}{4k_1^3t^2}$ is decreasing on $[0,\sqrt{\frac{1}{k_1}})$ and increasing on $[\sqrt{\frac{1}{k_1}},\infty)$. 
 so we have
 \[
 \alpha(t)\leq  \hat \alpha(k_1,T)=:\begin{cases}
 C\frac{e}{2k_1},& \text{if}~ k_1\leq \frac{1}{T^2},\\
C \max\{ \frac{e}{2k_1}, \frac{e^{2k_1T}}{4k_1^3T^2} \}, & \text{if}~ k_1\geq \frac{1}{T^2}.
 \end{cases}
 \]
 As for the lower bound for $\alpha(t)$, we make Taylor expansion for $e^{2k_1t}$ up to $4^{th}$ order, we then get
 \begin{align*}
 \alpha(t)&=1+\frac{C}{3}t+o(t)\geq 1 ,\quad
 \text{for} ~t\in (0,T], \text{and}~ C=(2k_1+\frac{\kappa}{\var}+\frac{t_1d_2}{\rho})
 \end{align*}
 thus we have $\frac{1}{\alpha(t)}\leq 1$ for $\forall t\in (0,T]$.
 \end{rem}
 \begin{proof}{Proof of theorem \ref{parabolic harnack}}
 The proof for this theorem is quite traditional. Similar proof can be found in \cite{BaudoinGarofalo14},\cite{BCP10} and \cite{Qian13}. We fix two points $(x,s),(y,t)\in \M \times (0,T)$ with $s<t$. Let $\gamma(\tau),$ $0\leq \tau\leq T$ be a subunit path such that $\gamma(0)=y,\gamma(T)=x$, consider the path in $\M \times (0,T)$ defined by 
 \[
 m(\tau)=(\gamma(\tau),t+\frac{s-t}{T}\tau)
 \]
 such that $m(0)=(y,t),m(T)=(x,s)$. We have 
 \begin{align*}
 \log \frac{u(x,s)}{u(y,t)}&=\int_0^T\frac{d}{d\tau}\log u(m(\tau))d\tau\\
 &\leq \int_0^T |\nabla_{\cH}\log u||\frac{\partial \gamma}{\partial \tau}|-\frac{t-s}{T}\frac{\partial \log u}{\partial t}(m(\tau))d\tau
 \end{align*} 
 using Corollary \ref{cor: diff harnack}
  \begin{align*}
 \log \frac{u(x,s)}{u(y,t)}&\leq \int_0^T [|\nabla_{\cH}\log u||\frac{\partial \gamma}{\partial \tau}|-  \frac{t-s}{T} \frac{1}{\alpha(\tau)}\Gamma^{\tau}(\log u) +\frac{t-s}{T\alpha(\tau)}A+\frac{t-s}{T\alpha(\tau)}\frac{\rho}{t+\frac{s-t}{T}\tau} ]d\tau\\
 &\leq \int_0^T \frac{\alpha(\tau)}{4}\frac{T}{t-s}|\frac{\partial \gamma}{\partial \tau}|^2d\tau+\int_0^T\frac{t-s}{T\alpha(\tau)}Ad\tau+\int_s^t\frac{\rho}{4\tau}\frac{1}{\alpha(\frac{\tau-t}{s-t}T)}d\tau\\
 &\leq \frac{T\hat \alpha(T,k_1)}{4(t-s)}\int_0^T|\frac{\partial \gamma}{\partial \tau}|^2d\tau +A(t-s)+\frac{\rho}{4}\log(\frac{t}{s})
 \end{align*}
  wtih 
  \[
 A=\frac{n\bar k}{2}+\frac{1}{4}(\frac{1}{\rho}+F(r_{\var},\gamma)+\rho\bar k), \bar k=\max\{k_1,k_2\}
 \]
 the last inequality follows from the Remark \ref{alpha t estimate}. We eventually get
 \begin{align*}
 u(x,s)\leq u(y,t)(\frac{t}{s})^{\frac{\rho}{4}}\exp\left(\frac{T\hat \alpha(k_1,T)}{4(t-s)}r_{}^2(x,y)+A(t-s) \right)
 \end{align*}
and $r_{}$ is the subunit length of $\gamma$, namely the Carnot-Carath\'eodory distance $d_{cc}$ between $x$ and $y$ induced by the subelliptic operator $L^t$.
 \end{proof}
 \begin{proof}{Proof of corollary \ref{kernel upper bound}}
The proof is similar to Corollary $7.2$ in \cite{BaudoinGarofalo14}.
\end{proof}

\section*{Acknowledgement}
The author would like to thank his thesis advisor Prof. Fabrice Baudoin for his constant support and encouragement, as well as for all his advices and discussions along the project. The author would also like to thank Prof. Ovidiu Munteanu for some helpful discussions.

\begin{thebibliography}{10}

\bibitem{AgrachevLee15}
Andrei Agrachev and Paul~W.Y. Lee.
\newblock Bishop and Laplacian comparison theorems on three-dimensional contact
  sub-riemannian manifolds with symmetry.
\newblock {\em The Journal of Geometric Analysis}, 25(1):512--535, 2015.

\bibitem{AGS15}
Luigi Ambrosio, Nicola Gigli, Giuseppe Savar{\'e}, et~al.
\newblock Bakry--{\'E}mery curvature-dimension condition and Riemannian Ricci
  curvature bounds.
\newblock {\em The Annals of Probability}, 43(1):339--404, 2015.

\bibitem{BCP10}
Mihai Bailesteanu, Xiaodong Cao, and Artem Pulemotov.
\newblock Gradient estimates for the heat equation under the Ricci flow.
\newblock {\em Journal of Functional Analysis}, 258(10):3517--3542, 2010.

\bibitem{BakryEmery85}
Dominique~Bakry and Michel \'Emery.
\newblock Diffusions hypercontractives.
\newblock In {\em S\'eminaire de probabilit\'es, {XIX}, 1983/84}, volume 1123
  of {\em Lecture Notes in Math.}, pages 177--206. Springer, Berlin, 1985.

\bibitem{Baudoin14}
Fabrice Baudoin.
\newblock Sub-{L}aplacians and hypoelliptic operators on totally geodesic
  {R}iemannian foliations.
\newblock In {\em Geometry, analysis and dynamics on sub-{R}iemannian
  manifolds. {V}ol. 1}, EMS Ser. Lect. Math., pages 259--321. Eur. Math. Soc.,
  Z\"urich, 2016.

\bibitem{BaudoinFeng15}
Fabrice Baudoin and Qi~Feng.
\newblock Log-Sobolev inequalities on the horizontal path space of a totally
  geodesic foliation.
\newblock {\em arXiv preprint arXiv:1503.08180}, 2015.

\bibitem{BaudoinFengGordina17}
Fabrice Baudoin, Qi~Feng, and Maria Gordina.
\newblock Quasi-invariance of horizontal wiener measure on a compact foliated
  Riemannian manifold.
\newblock {\em arXiv preprint 	arXiv:1707.03135}, 2017.

\bibitem{BaudoinGarofalo09}
Fabrice Baudoin and Nicola Garofalo.
\newblock Generalized bochner formulas and ricci lower bounds for
  sub-riemannian manifolds of rank two.
\newblock 04 2009.


\bibitem{BaudoinGarofalo14}
Fabrice Baudoin and Nicola Garofalo.
\newblock Curvature-dimension inequalities and {R}icci lower bounds for
  sub-{R}iemannian manifolds with transverse symmetries.
\newblock {\em J. Eur. Math. Soc. (JEMS)}, 19(1):151--219, 2017.

\bibitem{BGKT17}
Fabrice Baudoin, Erlend Grong, Kazumasa Kuwada, and Anton Thalmaier.
\newblock Sub-laplacian comparison theorems on totally geodesic riemannian
  foliations.
\newblock {\em arXiv preprint arXiv:1706.08489}, 2017.


\bibitem{BKW15}
Fabrice Baudoin, Bumsik Kim, and Jing Wang.
\newblock Transverse {W}eitzenb\"ock formulas and curvature dimension
  inequalities on {R}iemannian foliations with totally geodesic leaves.
\newblock {\em Comm. Anal. Geom.}, 24(5):913--937, 2016.

\bibitem{BHV15}
Lucio Bedulli, Weiyong He, and Luigi Vezzoni.
\newblock Second-order geometric flows on foliated manifolds.
\newblock {\em The Journal of Geometric Analysis}, pages 1--29, 2017.

\bibitem{Besse07}
Arthur~L Besse.
\newblock {\em Einstein manifolds}.
\newblock Springer Science \& Business Media, 2007.

\bibitem{ChowKnopf04}
Bennett Chow and Dan Knopf.
\newblock {\em The Ricci flow: an introduction}, volume 110.
\newblock American Mathematical Society Providence, 2004.

\bibitem{Collins12}
Tristan~C. Collins.
\newblock The transverse entropy functional and the {S}asaki-{R}icci flow.
\newblock {\em Trans. Amer. Math. Soc.}, 365(3):1277--1303, 2013.

\bibitem{Feng17}
Qi~Feng.
\newblock W-entropy functional on totally geodesic foliations with transverse
  ricci flow.
\newblock {\em In preparation}, 2018.

\bibitem{Ferus70}
Dirk Ferus.
\newblock Totally geodesic foliations.
\newblock {\em Math. Ann.}, 188:313--316, 1970.

\bibitem{FigalliRifford10}
Alessio Figalli and Ludovic Rifford.
\newblock Mass transportation on sub-riemannian manifolds.
\newblock {\em Geometric And Functional Analysis}, 20(1):124--159, 2010.

\bibitem{GrongThalmaier16curvature1}
Erlend Grong and Anton Thalmaier.
\newblock Curvature-dimension inequalities on sub-riemannian manifolds obtained
  from Riemannian foliations: part I.
\newblock {\em Mathematische Zeitschrift}, 282(1-2):99--130, 2016.

\bibitem{GrongThalmaier16curvature2}
Erlend Grong and Anton Thalmaier.
\newblock Curvature-dimension inequalities on sub-riemannian manifolds obtained
  from Riemannian foliations: part II.
\newblock {\em Mathematische Zeitschrift}, 282(1-2):131--164, 2016.

\bibitem{GrongThalmaier16stochastic}
Erlend Grong and Anton Thalmaier.
\newblock Stochastic completeness and gradient representations for
  sub-Riemannian manifolds.
\newblock {\em arXiv preprint arXiv:1605.00785}, 2016.

\bibitem{Hamilton82}
Richard~S Hamilton et~al.
\newblock Three-manifolds with positive Ricci curvature.
\newblock {\em Journal of Differential Geometry}, 17(2):255--306, 1982.

\bibitem{HaslhoferNaber15weak}
Robert Haslhofer and Aaron Naber.
\newblock Weak solutions for the Ricci flow i.
\newblock {\em arXiv preprint arXiv:1504.00911}, 2015.

\bibitem{HaslhoferNaber16ricci}
Robert Haslhofer and Aaron Naber.
\newblock Ricci curvature and Bochner formulas for martingales.
\newblock {\em arXiv preprint arXiv:1608.04371}, 2016.

\bibitem{HeinNaber14}
Hans-Joachim Hein and Aaron Naber.
\newblock New Logarithmic Sobolev inequalities and an $\epsilon$-regularity
  theorem for the Ricci flow.
\newblock {\em Communications on Pure and Applied Mathematics},
  67(9):1543--1561, 2014.

\bibitem{Juillet09}
Nicolas Juillet.
\newblock Geometric inequalities and generalized Ricci bounds in the Heisenberg
  group.
\newblock {\em International Mathematics Research Notices},
  2009(13):2347--2373, 2009.

\bibitem{KhesinLee09}
Boris Khesin, Paul Lee, et~al.
\newblock A nonholonomic Moser theorem and optimal transport.
\newblock {\em Journal of Symplectic Geometry}, 7(4):381--414, 2009.

\bibitem{YauLi86}
Peter Li and Shing~Tung Yau.
\newblock On the parabolic kernel of the Schr{\"o}dinger operator.
\newblock {\em Acta Mathematica}, 156(1):153--201, 1986.

\bibitem{LiLi17}
Songzi Li and Xiang-Dong Li.
\newblock $ W $-entropy, super Perelman Ricci flows and $(k, m) $-Ricci
  solitons.
\newblock {\em arXiv preprint arXiv:1706.07040}, 2017.

\bibitem{LottVillani09}
John Lott and C{\'e}dric Villani.
\newblock Ricci curvature for metric-measure spaces via optimal transport.
\newblock {\em Annals of Mathematics}, pages 903--991, 2009.

\bibitem{LMR00}
Miroslav Lovric, Maung Min-Oo, and Ernst~A Ruh.
\newblock Deforming transverse Riemannian metrics of foliations.
\newblock {\em Asian Journal of Mathematics}, 4:303--314, 2000.

\bibitem{MccannTopping10}
Robert~J McCann and Peter~M Topping.
\newblock Ricci flow, entropy and optimal transportation.
\newblock {\em American Journal of Mathematics}, 132(3):711--730, 2010.

\bibitem{Molino88}
Pierre Molino.
\newblock Riemannian foliations, translated from the french by grant cairns,
  with appendices by cairns, y. carriere, {\'e}. ghys, e. salem and v.
  sergiescu.
\newblock {\em Progress in Mathematics}, 73, 1988.

\bibitem{Naber13}
Aaron Naber.
\newblock Characterizations of bounded Ricci curvature on smooth and nonsmooth
  spaces.
\newblock {\em arXiv preprint arXiv:1306.6512}, 2013.

\bibitem{Perelman02}
Grisha Perelman.
\newblock The entropy formula for the ricci flow and its geometric
  applications.
\newblock {\em arXiv preprint math/0211159}, 2002.


\bibitem{PhongPicardZhang15}
Duong~H Phong, Sebastien Picard, and Xiangwen Zhang.
\newblock Geometric flows and Strominger systems.
\newblock {\em Mathematische Zeitschrift}, pages 1--13, 2015.

\bibitem{PhongPicardZhang16}
Duong~H Phong, Sebastien Picard, and Xiangwen Zhang.
\newblock The Anomaly flow and the Fu-Yau equation.
\newblock {\em arXiv preprint arXiv:1610.02740}, 2016.

\bibitem{Qian13}
Bin Qian.
\newblock Differential Harnack inequalities and Perelman type entropy formulae
  for subelliptic operators.
\newblock {\em Nonlinear Analysis: Theory, Methods \& Applications},
  155:163--175, 2017.

\bibitem{Reinhart59}
Bruce~L Reinhart.
\newblock Foliated manifolds with bundle-like metrics.
\newblock {\em Annals of Mathematics}, pages 119--132, 1959.

\bibitem{Rovenski98}
Vladimir Rovenskii.
\newblock {\em Foliations on {R}iemannian manifolds and submanifolds}.
\newblock Birkh{\"a}user Boston, Inc., Boston, MA, 1998.
\newblock Appendix A by the author and Victor Toponogov.

\bibitem{Rovenski13partial}
Vladimir Rovenski and Vladimir Sharafutdinov.
\newblock The partial Ricci flow on one-dimensional foliations.
\newblock {\em arXiv preprint arXiv:1308.0985}, 2013.

\bibitem{Rovenski13}
Vladimir Rovenski and Robert Wolak.
\newblock Deforming metrics of foliations.
\newblock {\em Central European Journal of Mathematics}, 11(6):1039--1055,
  2013.

\bibitem{SWZ10}
Knut Smoczyk, Guofang Wang, and Yongbing Zhang.
\newblock The Sasaki--Ricci flow.
\newblock {\em International Journal of Mathematics}, 21(07):951--969, 2010.

\bibitem{Sturm06}
Karl-Theodor Sturm.
\newblock On the geometry of metric measure spaces.
\newblock {\em Acta mathematica}, 196(1):65--131, 2006.

\bibitem{Tondeur88}
Philippe Tondeur.
\newblock {\em Foliations on Riemannian manifolds}.
\newblock Springer Science \& Business Media, 2012.

\bibitem{Topping06}
Peter Topping.
\newblock {\em Lectures on the Ricci flow}, volume 325.
\newblock Cambridge University Press, 2006.

\bibitem{Zhang06}
Qi~S Zhang.
\newblock Some gradient estimates for the heat equation on domains and for an
  equation by Perelman.
\newblock {\em International Mathematics Research Notices}, 2006, 2006.

\end{thebibliography}

\end{document}